\def\date{Mar 4 2019. Last revised Sep 24 2019}
\newtheorem{proposition}{Proposition}[section]
\newtheorem{lemma}[proposition]{Lemma}
\newtheorem{corollary}[proposition]{Corollary}
\newtheorem{theorem}[proposition]{Theorem}
\newtheorem{claim}[proposition]{Claim}
\newtheorem{thm}[proposition]{Theorem}
\theoremstyle{definition}
\newtheorem{definition}[proposition]{Definition}
\newcommand{\mcal}{\mathcal}
\newcommand{\F}{\mcal{F}}
\newcommand{\R}{\mcal{R}}
\begin{document}
\font\smallrm=cmr8

\baselineskip=12pt
\phantom{a}\vskip .25in
\centerline{{\bf  Linear-Time and Efficient Distributed Algorithms for List Coloring Graphs on Surfaces}}
\vskip.4in
\centerline{{\bf Luke Postle}
\footnote{\texttt{lpostle@uwaterloo.ca. Canada Research Chair in Graph Theory. Partially supported by NSERC under Discovery Grant No. 2019-04304, the Ontario Early Researcher Awards program and the Canada Research Chairs program.}}} 
\smallskip
\centerline{Department of Combinatorics and Optimization}
\smallskip
\centerline{University of Waterloo}
\smallskip
\centerline{Waterloo, Ontario, Canada}

\vskip.25in \centerline{\bf ABSTRACT}
\bigskip

{
\parshape=1.0truein 5.5truein
\noindent

In 1994, Thomassen proved that every planar graph is $5$-list-colorable. In 1995, Thomassen proved that every planar graph of girth at least five is $3$-list-colorable. His proofs naturally lead to quadratic-time algorithms to find such colorings. Here, we provide the first linear-time algorithms to find such colorings.

For a fixed surface $\Sigma$, Thomassen showed in 1997 that there exists a linear-time algorithm to decide if a graph embedded in $\Sigma$ is $5$-colorable and similarly in 2003 if a graph of girth at least five embedded in $\Sigma$ is $3$-colorable. Using the theory of hyperbolic families, the author and Thomas showed such algorithms exist for list-colorings. Around the same time, Dvo\v{r}\'ak and Kawarabayashi also provided such algorithms. Moreover, they gave an $O(n^{O(g+1)})$-time algorithm to find such colorings (if they exist) in $n$-vertex graphs where $g$ is the Euler genus of the surface. Here we provide the first such algorithm which is fixed parameter tractable with genus as the parameter; indeed, we provide a linear-time algorithm to find such colorings. 

In 1988, Goldberg, Plotkin and Shannon provided a deterministic distributed algorithm for $7$-coloring $n$-vertex planar graphs in $O(\log n)$ rounds.  In 2018, Aboulker, Bonamy, Bousquet, and Esperet provided a deterministic distributed algorithm for $6$-coloring $n$-vertex planar graphs in $O(\log^3 n)$ rounds. Their algorithm in fact works for $6$-list-coloring. They also provided an $O(\log^3 n)$-round algorithm for $4$-list-coloring triangle-free planar graphs. Chechik and Mukhtar independently obtained such algorithms for ordinary coloring in $O(\log n)$ rounds, which is best possible in terms of running time. Here we provide the first polylogarithmic deterministic distributed algorithms for $5$-coloring $n$-vertex planar graphs and similarly for $3$-coloring planar graphs of girth at least five. Indeed, these algorithms run in $O(\log n)$ rounds, work also for list-colorings, and even work on a fixed surface (assuming such a coloring exists).
}

\vfill \baselineskip 11pt \noindent \date.
\vfil\eject
\baselineskip 12pt

\section{Introduction}

\noindent {\bf List Coloring Planar Graphs.} Graph coloring is a widely studied area in graph theory. In particular, coloring planar graphs - and more generally coloring graphs embedded in surfaces - has received much attention. Recall that a \emph{$k$-coloring} of a graph $G$ is an assignment of colors $\{1,\ldots,k\}$ such that adjacent vertices do not receive the same color. In 1977, Appel and Haken~\cite{AH1,AH2} proved the Four Color Theorem that every planar graph has a $4$-coloring. In 1989, they~\cite{AHAlg} gave a quartic-time algorithm to find such a coloring, which was improved to a quadratic-time algorithm by Robertson, Sanders, Seymour and Thomas~\cite{RSST} in 1996. In 1959, Gr\"otzsch proved that every triangle-free planar graph $G$ has a $3$-coloring. Thomassen's short proof~\cite{Thom3Short} of Gr\"otzsch's theorem can easily be converted to a quadratic-time algorithm to find such a coloring. This was improved to $O(v(G)\log v(G))$ by Kowalik~\cite{Kow} in 2004 and finally to a linear-time algorithm in 2009 by Dvo\v{r}\'ak, Kawarabayashi and Thomas~\cite{DvoKawTho}.

List-coloring is a generalization of coloring introduced by Erd\H{o}s, Rubin and Taylor~\cite{ERT} in 1979 and independently by Vizing~\cite{Vizing} in 1976. A \emph{list-assignment} $L$ of a graph $G$ is an assignment of lists of colors $L(v)$ to each vertex of $v$ of $G$; we say $L$ is a \emph{$k$-list-assignment} if $|L(v)|\ge k$ for every vertex $v$. An \emph{$L$-coloring} $\phi$ is a coloring such that $\phi(v)\in L(v)$ for each vertex $v$. We say a graph $G$ is \emph{$k$-list-colorable} if $G$ has an $L$-coloring for every $k$-list-assignment $L$. In 1994, Thomassen~\cite{Thom5List} proved that every planar graph is $5$-list-colorable; this is best possible in that Voigt~\cite{Voigt} constructed a planar graph that is not $4$-list-colorable. In 1995, Thomassen~\cite{Thom3List} proved that every planar graph having girth at least five is $3$-list-colorable; this is best possible as Voigt~\cite{VoigtGirthFour} constructed a triangle-free planar graph that is not $3$-list-colorable. Of course, every triangle-free planar graph is $4$-list-colorable as such graphs are $3$-degenerate. 

For the sake of convenience when stating theorems, we say a list assignment $L$ of a graph $G$ is \emph{type $345$} if $L$ is a $k$-list-assignment for some $k\in\{3,4,5\}$ and $G$ has girth at least $8-k$. Thomassen's proofs naturally lead to quadratic-time algorithms to find $L$-colorings for type $345$ list-assignments. It seems conceivable that a clever algorithmic implementation of Thomassen's proofs would yield linear-time algorithms; however, no such algorithm has appeared in the literature to date.

Our first main result (a special case of our more general result about surfaces stated later) is to provide the first linear-time algorithms to find such colorings.

\begin{thm}\label{PlanarLinear}
There exists a linear-time algorithm to find an $L$-coloring of a planar graph $G$ if $L$ is a type 345 list assignment for $G$.
\end{thm}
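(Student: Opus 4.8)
The plan is to treat the three cases of a type $345$ list-assignment in increasing order of difficulty. If $k=4$, then $G$ is triangle-free and planar, hence $|E(G)|\le 2|V(G)|-4$; every subgraph has the same property, so $G$ is $3$-degenerate, and the greedy algorithm — repeatedly delete a vertex of degree at most $3$, then color in reverse order, each time picking from the (size $\ge 4$) list a color off the at most three already-colored neighbors — produces an $L$-coloring in $O(|V(G)|)$ time. The real content is $k=5$ (Thomassen's $5$-list theorem) and $k=3$ (Thomassen's girth-five $3$-list theorem); these share an algorithmic skeleton, which I describe for $k=5$.

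For $k=5$, first compute a planar embedding in linear time and reduce, by the standard augmentation (delete parallel edges; link components; insert a new vertex joined to the whole boundary walk of each non-triangular face, giving each new vertex the list $\{1,2,3,4,5\}$), to the case that $G$ is a simple plane triangulation with outer triangle $t_1t_2t_3$; an $L$-coloring of this graph restricts to one of the original, and the reduction is linear-time. Shrink every list to size exactly $5$ (interior) or $3$ (the non-precolored boundary vertices), and run Thomassen's induction as a recursion on \emph{canvases} — a near-triangulation with outer cycle $v_1\cdots v_p$, adjacent precolored vertices $v_1,v_2$ of distinct colors, and these list sizes. At a canvas: if the outer cycle has a chord $v_iv_j$, split along it into the sub-canvas $G_1$ containing the precolored edge and the sub-canvas $G_2$, recursively color $G_1$, then recursively color $G_2$ with the now two-colored edge $v_iv_j$ as its precolored edge; if there is no chord, the neighbor $v_p$ of $v_1$ on the outer cycle has interior neighbors $u_1,\dots,u_m$ with $v_{p-1}u_1, u_1u_2, \dots, u_mv_1$ edges, so reserve two colors of $L(v_p)$ different from the color of $v_1$, delete them from each $L(u_i)$ (leaving lists of size $\ge 3$), recursively color $G-v_p$ on the outer cycle $v_1\cdots v_{p-1}u_1\cdots u_m$, and extend to $v_p$ (at least one reserved color survives there). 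I would store the embedding as a doubly connected edge list and the outer cycle as a cyclic doubly linked list with order-maintenance labels, so that locating $v_p$, reading $u_1,\dots,u_m$, performing the split, and updating lists all cost time proportional to the degrees involved.

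The heart of the matter — and the step I expect to be the main obstacle — is proving the total running time is $O(|V(G)|)$ rather than quadratic. The no-chord reductions are genuinely local: deleting $v_p$ and updating $u_1,\dots,u_m$ costs $O(\deg_G(v_p)+\sum_i\deg_G(u_i))$, each vertex is the deleted vertex $v_p$ or a freshly exposed $u_i$ only $O(1)$ times, and — crucially — after deleting $v_p$ the only edges that can newly become chords of the outer cycle are edges incident to one of $u_1,\dots,u_m$, so the chord search need only rescan the neighborhoods of freshly exposed vertices; all of this amortizes to $O(\sum_v\deg_G(v))=O(|V(G)|)$. The chord-split step is the delicate one: a naive implementation that copies a shared chord list into both sub-canvases is $\Theta(|V(G)|^2)$ on nested chords. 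The fix is to always split along an \emph{innermost} chord: then the inside sub-canvas inherits no chords at all, while the other sub-canvas merely deletes that one chord from its list, so no chord list is ever duplicated; since a split increases the total vertex count by exactly two and each edge serves as a splitting chord at most once (afterward it is a permanent boundary edge), there are at most $|E(G)|$ splits and the total chord-bookkeeping is linear. Making ``find an innermost chord'' and ``insert a dynamically created chord'' run in amortized $O(1)$ time — e.g.\ by keeping the chords in a position-sorted list with a bracket-matching stack whose top is always an innermost chord — is the fussy but, I believe, doable core of the argument, and verifying it carefully is where most of the work lies.

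Finally, the case $k=3$, girth at least five, follows the same skeleton with Thomassen's girth-five argument in place of the $5$-list one: a strengthened statement about plane graphs of girth $\ge 5$ with a short precolored boundary path, lists of size $\ge 2$ on the rest of the outer cycle and $\ge 3$ inside, proved by splitting along a chord or a $2$-cut exactly as above, or else by applying one of a short list of local reducible configurations near the boundary (low-degree boundary vertices, short paths with both endpoints on the boundary, and the like). Each such reduction touches only boundedly many vertices and changes only $O(1)$ colors per affected vertex, and, as before, a reduction can create new chords or new reducible configurations only within bounded distance of where it was applied, so the same amortization gives an $O(|V(G)|)$-time algorithm; the extra labor is just to check this locality and amortization for each of Thomassen's girth-five configurations. (The extension to graphs embedded on a fixed surface is the genus-zero special case of the general theorem stated later and is not needed here.)
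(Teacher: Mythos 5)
Your route is genuinely different from the paper's. You implement Thomassen's inductive proofs directly, with amortized data structures; the paper never runs Thomassen's induction at all. Instead it proves (Theorem~\ref{DeletablePockets}, via the structure theorem, Theorem~\ref{PlanarStructure}, and the strong-hyperbolicity machinery of the hyperbolic-families papers) that a planar graph contains at least $\frac{1}{2C}v(G)$ pairwise non-touching, constant-size, constant-degree $(8-g)$-deletable pockets, deletes them all, recurses on a graph with at most $\left(1-\frac{1}{2C}\right)v(G)$ vertices (Corollary~\ref{ProportionallySmaller}), and then extends the coloring back pocket by pocket; Thomassen's theorems enter only as black boxes (Theorem~\ref{PrecoloredPath}, Lemma~\ref{PurseDeletable}) certifying that purses are deletable, and linearity comes from geometric shrinkage of the recursion rather than from amortizing an inductive proof. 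Within your plan, the $k=4$ case is fine, and your $k=5$ outline does contain the right amortization ideas (rescan only freshly exposed vertices; split so that chord lists are never duplicated). But even there the load-bearing claim---amortized $O(1)$ maintenance of an innermost chord under dynamic insertion of chords at arbitrary positions along an evolving outer cycle---is asserted rather than proved, and the bracket-matching stack as described does not handle dynamic insertions (it matches brackets discovered in a single left-to-right scan). This particular difficulty is avoidable: Thomassen's induction goes through if one only ever splits along a chord incident to the active vertex $v_p$, since the deletion step needs only that no chord is incident to $v_p$, so no global chord structure is required; but as written your data-structure core is a promissory note.

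The genuine gap is the $g=5$, $k=3$ case, which you dispatch by analogy. Thomassen's girth-five theorem does not have ``the same skeleton'' as the $5$-list proof: its inductive statement involves a precolored path and boundary vertices with lists of size one and two subject to side conditions, and its reductions include cutting along $2$-chords and other short paths joining boundary vertices, together with a nontrivial collection of reducible configurations near the boundary. The assertion that each such reduction is local, creates new chords or new configurations only within bounded distance of where it was applied, and amortizes to $O(v(G))$ total work is precisely the content that would need to be proved, not ``extra labor to check''; detecting newly created $2$-chords, for instance, already requires information about second neighborhoods of exposed vertices, and the bookkeeping for the list-size-one/two boundary conditions across splits is not addressed at all. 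The paper explicitly notes that no linear-time implementation of Thomassen's proofs has appeared in the literature, and the history of the easier ordinary-coloring problem (a dedicated paper of Dvo\v{r}\'ak, Kawarabayashi and Thomas was needed to make Gr\"otzsch's theorem linear-time, largely because direct implementations of Thomassen-style arguments were not linear) shows such conversions are not routine. So your proposal establishes the theorem only modulo a substantial unproved claim in the hardest case, which is exactly the kind of difficulty the paper's deletable-pocket approach was designed to bypass.
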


{\noindent \bf Coloring and List-Coloring Graphs on Surfaces.} It is natural to wonder if the results mentioned above can be extended to graphs embedded in a fixed surface. Of course, there do exist graphs that are not $5$-list-colorable. Since coloring is a monotone property, it is natural to study minimally non-colorable graphs. To that end, we say a graph $G$ is \emph{critical for $k$-coloring} if $G$ is not $k$-colorable but every proper subgraph of $G$ is. Similarly we say a graph $G$ is \emph{critical for $k$-list-coloring} if there exists a $k$-list-assignment $L$ such that $G$ does not have an $L$-coloring but every proper subgraph of $G$ does. 

In a lengthy breakthrough work in 1997, Thomassen~\cite{Thom5Surface} proved that for every surface $\Sigma$, there exist only finitely many graphs embeddable in $\Sigma$ that are critical for $5$-coloring. As a corollary, it follows that there exists a linear-time algorithm for each surface $\Sigma$ to decide if a graph embeddable in $\Sigma$ is $5$-colorable. Similarly in 2003 in yet another deep work, Thomassen~\cite{Thom3Surface} proved that for every surface $\Sigma$, there exist only finitely many graphs of girth at least five embeddable in $\Sigma$ that are critical for $3$-coloring. Hence there exists a linear-time algorithm for each surface $\Sigma$ to decide if a graph of girth at least five embeddable in $\Sigma$ is $3$-colorable.

Thomassen~\cite{Thom5Surface} naturally asked if such results could be generalized to list-coloring. For $5$-list coloring, the author and Thomas~\cite{PostleThomas} proved the analogous result while the author~\cite{Postle} proved the analogous result for $3$-list-coloring graphs having girth at least five. These proofs are rather lengthy and rely on the theory of strongly hyperbolic families developed by the author and Thomas in~\cite{PostleThomas}. Moreover, we proved that such critical graphs have at most $O(g(\Sigma))$ vertices, where $g(\Sigma)$ denotes the \emph{Euler genus} of $\Sigma$ (that is $2h+c$ where $h$ is the number of handles of $\Sigma$ and $c$ is the number of crosscaps). These results imply linear-time algorithms to decide if graphs embeddable in a fixed surface are $5$-list-colorable or $3$-list-colorable if they have girth at least five. Indeed, our algorithms can also decide if an $L$-coloring exists for a fixed type $345$-list-assignment $L$.

As for finding such an $L$-coloring (if it exists) given a fixed type $345$ list-assignment $L$, Dvo\v{r}\'ak and Kawarabayashi~\cite{DvoKawSODA} used some of the results on hyperbolic families combined with some results about the treewidth of graphs on surfaces to provide an $O(v(G)^{O(g(\Sigma)+1)})$-time algorithm to find an $L$-coloring of a graph embedded in $\Sigma$, provided such a coloring exists. Dvo\v{r}\'ak and Kawarabayashi~\cite{DvoKawSODA} asked whether the dependence on $g(\Sigma)$ in the exponent could be removed. This would imply that finding a $5$-list-coloring of a graph (if it exists) and similarly finding a $3$-list-coloring for a girth five graph (if it exists) is fixed parameter tractable (FPT) where the parameter is the genus. We answer this question in the affirmative. Indeed, we generalize Theorem~\ref{PlanarLinear} as follows.

\begin{thm}\label{SurfaceLinear}
For each surface $\Sigma$, there exists a linear-time algorithm to find an $L$-coloring (if it exists) of a graph $G$ embeddable in $\Sigma$ if $L$ is a type 345 list assignment for $G$.
\end{thm}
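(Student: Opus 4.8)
The plan is to reduce Theorem~\ref{SurfaceLinear} to a linear-time algorithm for the \emph{disk case} --- a plane graph $G$ with a bounded-size precolored subgraph $S$ contained in the outer walk and with $L$ a type 345 list assignment on $G - S$ --- and then to bootstrap from the disk case to an arbitrary surface $\Sigma$ using the structural theory of strongly hyperbolic families from \cite{PostleThomas,Postle}; Theorem~\ref{PlanarLinear} is then the special case $\Sigma = S^2$. The organising notion is a \emph{canvas} $(G,S,L)$ of this kind, which is \emph{colorable} when the precoloring of $S$ extends to an $L$-coloring of $G$, together with a finite catalogue of \emph{reducible configurations}: bounded-size (or suitably ``bulk'') subconfigurations whose presence lets one pass to a strictly smaller canvas $(G',S',L')$ with (a) $G'$ embedded in the same surface, (b) $L'$ again type 345 off $S'$, and (c) colorability equivalent for $(G,S,L)$ and $(G',S',L')$, together with an explicit transfer of $L$-colorings. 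Thomassen's two moves --- a chord of the outer cycle, and deletion of a boundary vertex after subtracting at most two colors from its neighbors' lists --- are the prototypes, although the catalogue must refine them into genuinely colorability-preserving (two-way) reductions.

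For the disk case I would implement Thomassen's induction as a reduction loop driven by a work queue. After standard linear-time preprocessing (making $G$ $2$-connected, reducing to near-triangulations in the $5$-list case and to the analogous normal forms in the girth-constrained cases), the algorithm repeatedly pops a candidate region, decides in $O(1)$ amortized time whether it hosts a reducible configuration, performs the reduction when it does, and re-enqueues only the $O(1)$ vertices in the affected region. Storing $G$ and the lists under a pointer / union--find representation keeps each list subtraction, identification, and local query down to $O(1)$ amortized, so each reduction costs $O(1)$ amortized. When no reducible configuration remains, the residual canvas has size bounded in terms of $|S|$ --- here one invokes the bound on critical canvases of \cite{PostleThomas,Postle} --- and it is solved by brute force. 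Each reduction strictly decreases $|V(G)| + |E(G)|$ and is charged to a potential of that order, so the running time is $O(v(G))$; unwinding the linear-size tree of reductions and chord-splits recovers the coloring.

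For a general surface $\Sigma$ I would first decide colorability of $(G,L)$ in linear time, which is already provided by \cite{PostleThomas,Postle}, and in the colorable case run essentially the same loop directly on $\Sigma$: a colorable canvas on $\Sigma$ that is ``large'' necessarily contains a large plane (disk) subcanvas, hence a reducible configuration by the disk analysis, so the loop terminates with a residual graph of size $O(g(\Sigma))$, a constant for fixed $\Sigma$, which is colored by brute force; alternatively, the hyperbolic-family machinery can be used to extract in linear time a bounded-size ``core'' $H \subseteq G$ and an $L$-coloring of $H$ whose extension problem decomposes, after cutting $G$ along $H$, into a bounded number of disk instances to which the disk algorithm is applied. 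The main obstacle is the disk case, and within it two intertwined points: proving the dichotomy ``every sufficiently large canvas contains a reducible configuration'' in a form strong enough to drive the algorithm --- which forces one to pin down a finite catalogue that is simultaneously reversible and rich enough (including the bulk moves that dispose of long chordless fans in one step) --- and arranging the catalogue and the reduction order so that the post-reduction rescan is purely local, which is what yields the amortized $O(1)$ bound. The surface-to-disk passage is a secondary difficulty: one must check that the core is not merely bounded but linear-time computable and that cutting along it produces honest disk instances rather than smaller surface instances.
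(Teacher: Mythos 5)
Your plan rests on two assertions that are precisely the missing pieces, not routine details. For the disk case you claim a finite catalogue of colorability-preserving reductions together with an amortized $O(1)$ implementation of Thomassen's induction via a work queue and union--find. But Thomassen's two moves are not of the form you need: the chord move does not shrink the instance, it splits it into two subproblems that must be solved in sequence (the second with a newly precolored path), the splits can be badly unbalanced, and simply identifying the chord/split and maintaining the outer-walk structure is where the naive implementation pays $\Theta(v(G))$ per step; the vertex-deletion move is cheap, but the claim that the post-reduction rescan is ``purely local'' and that the whole loop is charged to a potential of order $v(G)+e(G)$ is exactly the open implementation question. The paper states explicitly that no linear-time implementation of Thomassen's proofs is known, and its contribution is to avoid that route entirely: Theorem~\ref{PlanarStructure}/\ref{SurfaceStructure} produce linearly many pairwise non-touching constant-size pockets that are purses or $k$-deep, Lemmas~\ref{PurseDeletable} and~\ref{DeepDeletable} (the latter via the strongly hyperbolic family $\mathcal{F}_{345}$) show each contains an $(8-g)$-deletable subgraph in the strong sense of Definition~\ref{DeletableDef}, and Algorithm~\ref{Alg1} deletes a constant fraction of the vertices, recurses, and extends greedily into constant-size, constant-degree pieces. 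So the paper never needs a reducible-configuration catalogue or an amortized analysis of Thomassen's recursion; your proposal re-asserts the existence of such an implementation without supplying the catalogue, the two-way equivalence, or the charging argument, and that is a genuine gap rather than an alternative proof.

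The surface-to-disk passage has a second gap. Cutting $G$ along a bounded-size core $H$ does not in general yield disk instances: to cut $\Sigma$ open into a disk one needs a cut graph, whose size cannot be bounded by a function of $g(\Sigma)$ alone (it grows with, e.g., the edge-width/radius), and when the edge-width is large there are no short noncontractible cycles to cut along. Even granting disk pieces with bounded precolored boundary, an arbitrary precoloring of a boundary cycle in a disk need not extend --- that failure is exactly what the theory of critical canvases measures --- so ``the extension problem decomposes'' requires searching over colorings of the core that simultaneously extend into every piece; this interaction is where the $v(G)^{O(g(\Sigma)+1)}$ bound of Dvo\v{r}\'ak and Kawarabayashi~\cite{DvoKawSODA} comes from, and eliminating it is the content of Theorem~\ref{SurfaceLinear}. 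Your sketch also quietly assumes one can first ``decide colorability in linear time'' and then work only with colorable instances, but the paper's algorithm never needs such a separation: correctness (Lemma~\ref{AlgorithmTerminationCorrectness}) only uses that deletable pockets extend any coloring of the rest, so deleting them preserves both colorability and the ability to reconstruct a coloring. If you want to salvage your route, the honest statement of what must be proved is exactly a dichotomy of the form ``large implies a constant-size piece that can be removed without affecting colorability in either direction,'' and that is Theorem~\ref{DeletablePockets}.
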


We note that there are many graphs on surfaces which do have an $L$-coloring for every type $345$ list assignment. In particular, the author and Thomas~\cite{PostleThomas} showed that graphs with edge-width $\Omega(\log g(\Sigma))$ have this property. 



\vskip12pt

{\noindent \bf Deterministic Distributed Algorithms for Coloring and List-Coloring Planar Graphs.} Another paradigm of studying for coloring algorithms is distributed algorithms. Here we study the synchronous message-passing model of distributed computing, in particular the LOCAL model (where unlimited size messages are allowed). However, we believe our algorithm should also work in the CONGEST model (where there are limits on message sizes) but do not pursue this direction of study. Moreover, we restrict our attention to the study of deterministic algorithms in the LOCAL model. In these models, round complexity is the benchmark. Indeed, efficiency is usually defined as having round complexity at most polylogarithmic in the number of vertices. We refer the reader to the survey book of Barenboim and Elkin~\cite{BEBook} for more details on distributed computing. 

What then is known for deterministic distributed algorithm for coloring planar graphs? In 1988, Goldberg, Plotkin and Shannon~\cite{GPS} provided a deterministic distributed algorithm for $7$-coloring $n$-vertex planar graphs in $O(\log n)$ rounds. It should be noted that Goldberg, Plotkin and Shannon gave an $O(\log n \log^* n)$ round parallel algorithm for $5$-coloring $n$-vertex planar graphs. They falsely claimed that all of the results in that paper carried over to the distributed setting but this turns out not to be the case for $5$-coloring planar graphs. Rather their algorithm for $5$-coloring planar graphs would give $O(n)$ round complexity in the distributed setting. This inaccuracy seems to have been perpetuated (e.g. in~\cite{BESublog}) until the authors in~\cite{ABBE} noticed the flaw.  Hence the question of improving the distributed result for $5$-coloring and even $6$-coloring planar graphs had in fact been open. 

In 2018, Aboulker, Bonamy, Bousquet, and Esperet~\cite{ABBE} provided a deterministic distributed algorithm for $6$-coloring $n$-vertex planar graphs in $O(\log^3 n)$ rounds. Their algorithm in fact works for $6$-list-coloring. They also provided an $O(\log^3 n)$-round algorithm for $4$-list-coloring triangle-free planar graphs. Chechik and Mukhtar~\cite{CheMuk} independently obtained such algorithms for ordinary coloring in $O(\log n)$ rounds, which is best possible in terms of round complexity. Aboulker et al.~\cite{ABBE} asked whether such efficient algorithms exist for $5$-coloring planar graphs and $3$-coloring planar graphs of girth at least five. 

Aboulker et al.~\cite{ABBE} proved that no such efficient algorithms exist for $4$-coloring planar graphs and $3$-coloring triangle-free planar graphs. This is because such algorithms would imply that graphs whose $\Omega(\log n)$ neighborhoods are planar are $4$-colorable (and $3$-colorable if triangle-free) and there exist graphs which do not satisfy these properties. They thus also noted that their conjectures for the remaining cases would then imply that $\Omega(\log n)$ locally planar graphs are $5$-colorable and $3$-colorable if girth at least five. That result is true but the only known proof of it follows from the author and Thomas' general result~\cite{PostleThomas} about hyperbolic families.

Here we provide the first polylogarithmic deterministic distributed algorithms for $5$-coloring $n$-vertex planar graphs and similarly for $3$-coloring $n$-vertex planar graphs of girth at least five. Indeed, these algorithms run in $O(\log n)$ rounds and work even for list-colorings. In addition, they even work on a fixed surface assuming a coloring exists.

\begin{thm}\label{SurfaceLOCAL}
For each surface $\Sigma$, there exists a deterministic distributed algorithm that given a graph $G$ embeddable in $\Sigma$ and a type 345 list-assignment $L$ for $G$, finds an $L$-coloring of $G$ (if it exists) in $O(\log v(G))$ rounds.
\end{thm}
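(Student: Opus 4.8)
The plan is to leverage the structure already present in the author's earlier work on hyperbolic families and the finiteness of critical graphs, and to combine it with the standard machinery for efficient distributed computation via low-diameter decompositions. First I would invoke the theory from \cite{PostleThomas} and \cite{Postle}: for a type $345$ list-assignment, the critical subgraphs embeddable in $\Sigma$ have only $O(g(\Sigma))$ vertices, and more importantly the family of such critical graphs is \emph{strongly hyperbolic}. The key structural consequence I would extract is that in any graph $G$ embedded in $\Sigma$ with a type $345$ list-assignment $L$ admitting an $L$-coloring, one can identify, in a local fashion, either a precoloring-extendable structure or a bounded-size obstruction. In particular, cycles of bounded length that are non-contractible or that separate off complicated pieces are few, and away from them the graph behaves like a planar graph of the corresponding girth, where Thomassen's precoloring-extension lemmas apply.

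The core idea for the distributed algorithm is a ``peeling in layers'' approach driven by a ball decomposition. I would compute a partition of $V(G)$ into clusters of radius $O(\log v(G))$ such that the clusters at ``level zero'' can be colored locally using Thomassen's extension theorems (with the boundary vertices treated as having shortened lists once their neighbors in already-colored clusters are fixed), and then proceed through a bounded number of levels. Concretely, one first handles the ``global'' part: since there are only finitely many critical graphs on $\Sigma$, and since short non-contractible cycles and the bounded-genus ``core'' can be located in $O(\log v(G))$ rounds (a ball of radius $O(\log v(G))$ around any vertex either is planar or reveals a short non-contractible cycle), the algorithm first reduces to a collection of near-planar instances glued along short boundary cycles, colors the bounded ``hyperbolic core'' by brute force, and then colors the remaining planar-with-precolored-boundary pieces. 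For those pieces the engine is the linear-time planar algorithm underlying Theorem~\ref{PlanarLinear}, adapted to the LOCAL model: Thomassen's induction removes a vertex or contracts a short structure on the outer boundary, so the dependency chains it creates have length proportional to the ``width'' of the piece, which the ball decomposition guarantees is $O(\log v(G))$; hence all local decisions propagate in $O(\log v(G))$ rounds.

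The main obstacle I expect is turning Thomassen's inherently sequential precoloring-extension induction into something with only logarithmic-depth dependency. Thomassen's proof colors vertices one at a time along the outer face, so a naive implementation has dependency chains of length $\Omega(n)$; the fix is to show that the induction can be \emph{parallelized} into $O(\log n)$ batches. I would do this by establishing a ``shifting'' or ``splitting'' lemma: given a planar graph with a precolored boundary path and a type $345$ list-assignment in its interior, one can in $O(1)$ rounds either color a constant fraction of the vertices or split the instance into two instances each of at most a constant fraction of the size (e.g.\ by finding a short separating path that halves the graph, extending the precoloring along it using the fact that its interior lists have size $\ge k$, and recursing on both sides in parallel). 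Iterating the splitting gives recursion depth $O(\log v(G))$, and each split is implementable locally because the separating paths are short. Making the list bookkeeping consistent across splits — ensuring that after fixing a separating path the two subinstances are still valid type $345$ instances (with the boundary having the appropriate shortened lists) — is the delicate point, and is exactly where Thomassen's original lemmas, stated in the sharpened ``extend from a path'' form, get reused as black boxes.

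Finally, I would assemble the pieces: run the ball/BFS-based decomposition to identify the genus core and short non-contractible cycles in $O(\log v(G))$ rounds; color the bounded core exactly (this is $O(1)$ rounds of local computation since its size depends only on $\Sigma$); apply the parallelized splitting lemma to each resulting planar-with-precolored-boundary region to finish in $O(\log v(G))$ rounds; and argue correctness from the fact that every step either follows Thomassen's extension theorems or brute-forces a piece of size $f(\Sigma)$. The total round complexity is $O(\log v(G))$, and since the planar case is the special case $\Sigma = S_0$, this simultaneously reproves the claimed distributed result for planar graphs.
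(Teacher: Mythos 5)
Your proposal has a genuine gap at its central step: the ``splitting lemma'' you ask for does not exist in the form you need. Thomassen's precoloring-extension theorems only guarantee extension from a precolored path of at most $g-1$ vertices (two vertices in the $5$-list case; see the role of Theorem~\ref{PrecoloredPath} in the paper), and it is well known that a precolored path or cycle of unbounded length need not extend, even in a planar graph with $5$-lists --- this is precisely why the theory of critical graphs with precolored rings and hyperbolic families is needed at all. But a \emph{balanced} separator of a planar graph cannot in general be taken of bounded length (think of the grid, where balanced separators have size $\Omega(\sqrt{n})$), so the step ``find a short separating path that halves the graph, extend the precoloring along it, and recurse on both sides'' either uses a path too long for any extension guarantee, or a path too short to halve the instance. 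The same unjustified assumption reappears when you propose to brute-force the ``genus core'' and then color the ``planar-with-precolored-boundary pieces'': a coloring of the boundary cycles chosen obliviously need not extend into the pieces, so correctness (finding a coloring whenever one exists) is not established. Maintaining that each sub-instance remains a valid, colorable type $345$ instance after fixing boundary colors is exactly the difficulty, and your proposal leaves it open.

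The paper circumvents this in a different way: rather than splitting along precolored separators, it proves a new coloring theorem (Theorem~\ref{DeletablePockets}) asserting that every embedded graph of girth at least $g$ (and not too small relative to $g(\Sigma)$) contains linearly many pairwise non-touching, constant-size, constant-degree subgraphs that are $(8-g)$-\emph{deletable}: by Definition~\ref{DeletableDef} their residual lists, which already discount all edges to the outside, admit a coloring no matter how the rest of the graph is colored. The algorithm deletes these pockets, recurses on the remaining graph (a constant fraction smaller, giving $O(\log n)$ recursion depth with $O(1)$ rounds per level), and extends the coloring back in a constant number of batches using a distance coloring of the pockets computed during the recursion. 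Establishing that such deletable pockets exist in linear number is where the Lipton--Tarjan-based structure theorem and the strong hyperbolicity results of Postle and Thomas are used; no step of the algorithm ever fixes colors on a boundary whose extension is not guaranteed in advance. Your outline correctly identifies the sequentiality of Thomassen's induction as the obstacle, but the mechanism you propose to break it is the one step that cannot be made to work as stated.
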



\section{Proof Overview and Algorithm}  

In this section, we first provide an overview of the proofs of Theorems~\ref{SurfaceLinear} and~\ref{SurfaceLOCAL} at a high level. Then in the remaining subsections of this section, we proceed to give the formal definitions and statements of the major theorems used in their proofs. We also state the algorithm used for both Theorem~\ref{SurfaceLinear} and Theorem~\ref{SurfaceLOCAL}. Finally, we provide an outline of the rest of the paper at the end of the section.

\vskip.1in

\noindent {\bf Proof Overview.} We provide the same algorithm (Algorithm~\ref{Alg1}, stated in Section~\ref{ColAlg}) for both Theorems~\ref{SurfaceLinear} and~\ref{SurfaceLOCAL}. The complexity analysis for the two theorems is of course different but in both cases is rather straightforward (see Section~\ref{Sec:Algorithm} for details). The proof of the correctness, however, relies on the deep theorems proved by the author and Thomas described in the introduction. Nevertheless, there are two key new contributions in this work. 

The first contribution is a structure theorem (Theorem~\ref{PlanarStructure}) for planar graphs. Theorem~\ref{PlanarStructure} roughly says that a planar graph either has a $2$-separation with a constant-size side, or has a constant-size subgraph that is `deep' (i.e. the ratio between its vertices and its neighbors is large). More than that, the theorem guarantees linearly many pairwise non-touching such subgraphs. We note that Theorem~\ref{PlanarStructure} is not about coloring and hence may be of independent interest for developing efficient algorithms. Indeed, this structure theorem is essential for establishing the running time in both algorithms. 

The second contribution is to combine this structure theorem with the deep coloring theorems proved elsewhere to prove a startling new coloring theorem (Theorem~\ref{DeletablePockets}). Theorem~\ref{DeletablePockets} roughly says that for every planar graph $G$, there exists a constant-size, constant-degree subgraph that is `deletable' for type $345$ list assignments (i.e. every coloring of the remaining graph extends to a coloring of the subgraph no matter the list assignment). More than that, the theorem guarantee linearly many pairwise non-touching such subgraphs. We note that our notion of deletable (see Definition~\ref{DeletableDef}) is quite strong, essentially the strongest such notion one could define.

The algorithm then consists in finding and deleting such deletable subgraphs, then recursively finding a coloring of the smaller graph and finally extending that coloring to the deleted subgraphs. Since these subgraphs are constant-size and constant-degree these searches, deletions and extensions can be performed very efficiently. Since there are linearly many such subgraphs, the smaller graph will be proportionally smaller (i.e. $(1-\varepsilon)v(G)$ for some fixed $\varepsilon$). Combined these two facts lead to a linear-time algorithm (or a $O(\log n)$-round algorithm in the distributed case). 

We note that in the distributed case some care is needed when extending the colorings to guarantee $O(\log n)$-round complexity as opposed to $O(\log n \log^* n)$-round complexity. Namely that while the algorithm recurses, the deleted vertices must compute a constant coloring of an auxiliary graph on the deletable subgraphs needed for extending the coloring efficiently. Such a coloring can be computed in $\tilde{O}(\sqrt{\Delta(G)}) + O(\log^* n)$-round (where $\Delta(G)$ denotes the maximum degree of $G$) using the algorithm of Fragniaud, Heinrich and Kosowski~\cite{FHK}. Since $\Delta(G)$ will be a constant in our application, this will run in $O(\log^* n)$ time. To avoid $O(\log n \log^* n)$-round complexity, though, it is crucial that this coloring be computed while the algorithm recurses so that the overall complexity is $O(\log n)+O(\log^* n) = O(\log n)$-rounds.

In the remainder of this section, we present the formal versions of Theorems~\ref{PlanarStructure} and~\ref{DeletablePockets} and then state Algorithm~\ref{Alg1}. In the remaining sections of the paper, we then present the proof of the theorems and algorithm complexity.

\subsection{A Structure Theorem for Planar Graphs}\label{StrThm}

Here we state our structure theorem for planar graphs but first some definitions.

\begin{definition}
Let $G$ be a graph, $H$ be an induced subgraph of $G$ and $C, k >0$. The \emph{boundary} of $H$ in $G$ is $\{v\in V(H): N(v)\setminus V(H)\ne \emptyset\}$. The \emph{coboundary} of $H$ in $G$ is $\bigcup_{v\in H}N(v) \setminus V(H)$ (i.e. the boundary of $G[V(G)\setminus V(H)]$). 

\noindent We say $H$ is 
\begin{itemize}
\item a \emph{purse} if $H$ is connected, the size of the coboundary of $H$ (call it $S$) is at most two, and $G[S\cup V(H)] + \{uv: u\ne v\in S\}$ is planar,
\item a \emph{$C$-pocket} if $H$ is connected, $v(H)\le C$ and $d_G(v)\le C$ for every $v\in V(H)$,
\item a \emph{$C$-purse} if $H$ is a purse and a $C$-pocket,
\item \emph{$k$-deep} if the coboundary of $H$ is non-empty and has size at most $\frac{1}{k}v(H)$. 
\end{itemize}

\noindent We say two subgraphs $H_1,H_2$ of $G$ are \emph{non-touching} if $V(H_1)\cap V(H_2)=\emptyset$ and there does not exist $a_1\in V(H_1), a_2\in V(H_2)$ such that $a_1a_2\in E(G)$.
\end{definition}

As described in the proof overview, we will prove a structure theorem that every planar graph has a $C$-purse or a $k$-deep $C$-pocket. More than that, we will prove it has linearly many pairwise non-touching such subgraphs. Hence the following definition.

\begin{definition}
Let $G$ be a graph and $C>0$. A \emph{$C$-wallet} of $G$ is a set $\mathcal{H}$ such that all of the following hold:

\begin{itemize}
\item every $H\in \mathcal{H}$ is a $C$-pocket, and
\item every pair $H_1\ne H_2 \in \mathcal{H}$ are non-touching, and 
\item $|\{ H\in \mathcal{H}\}|\ge \frac{1}{2C}v(G)$.
\end{itemize}

\noindent Let $k>0$. We say a $C$-wallet $\mathcal{H}$ is \emph{$k$-deep} if for every $H\in \mathcal{H}$, $H$ is a purse or $k$-deep. 
\end{definition}

We are now prepared to state our structure theorem.

\begin{thm}\label{PlanarStructure}
For every $k\ge 1$, there exists $C>0$ such that the following holds: if $G$ is a planar graph, then there exists a $k$-deep $C$-wallet $\mathcal{H}$ of $G$.
\end{thm}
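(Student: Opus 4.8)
The plan is to argue by a discharging/counting argument on a plane embedding of $G$, using the fact that planar graphs are sparse. First I would reduce to the case where $G$ is connected (otherwise work component by component, noting the wallet bound is proportional to $v(G)$) and where $G$ has no vertices of degree at most $2$: a vertex $v$ of degree at most $2$ together with $N(v)$ already gives a purse that is a $C$-pocket for any $C\ge 3$, and such vertices are pairwise "almost" non-touching after a constant-ratio independent-set cleanup, so if there are linearly many low-degree vertices we are done immediately. So assume every vertex has degree at least $3$; then by Euler's formula $G$ has a linear number of edges, and in fact the set $W$ of vertices of degree at most some constant $D$ (to be chosen) has size at least, say, $\frac{1}{2}v(G)$ — this is the standard averaging argument, since vertices of degree $>D$ can absorb at most $2e(G)/D < 6v(G)/D$ of the degree sum.

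Next I would build the pockets greedily around the low-degree vertices. The key sub-step: for a suitable constant radius $r$ (depending on $k$ and $D$), consider for each $v\in W$ the ball $B_r(v)$ in $G$. If for some $v$ the ball $B_r(v)$ has the property that $|B_r(v)| \ge k\cdot|\partial B_r(v)|$ (coboundary small relative to size) and every vertex in it has bounded degree, then $G[B_r(v)]$ — or a connected subgraph of it — is a $k$-deep $C$-pocket for an appropriate $C = C(r,D)$. The point is that if this fails for every $v\in W$, then balls grow by a factor more than roughly $(1+1/k)$ at each step out to radius $r$, so $|B_r(v)|$ is at least exponential in $r$; choosing $r$ large (but constant, depending only on $k$) forces $|B_r(v)| > v(G)$ for all $v$ simultaneously unless $v(G)$ is itself bounded by a constant — and in the bounded case the theorem is trivial since $G$ itself is a $C$-pocket. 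Thus for $v(G)$ large, at least one, and by a covering argument linearly many, of these balls are genuinely $k$-deep pockets of bounded size.

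The final step is to extract a \emph{non-touching} family of linear size and verify the "purse or $k$-deep" and size/degree bounds. For non-touching: the pockets found have bounded diameter (at most $2r+1$) and bounded degree, hence bounded size, so each pocket together with its neighborhood meets only a bounded number $C'$ of other candidate pockets; a greedy selection (equivalently, a large independent set in the bounded-degree "conflict graph" on candidate pockets) keeps a $\frac{1}{C'+1}$ fraction, still linear in $v(G)$. Adjusting the constant $C$ upward to dominate all of $r$, $D$, $C'$ and the diameter bound gives the stated $|\mathcal{H}|\ge \frac{1}{2C}v(G)$. One must also ensure each selected $H$ is connected and $v(H)\le C$, $d_G(v)\le C$ for all $v\in H$ — connectivity by taking a connected component of the ball containing $v$, the degree bound because we only used low-degree vertices and bounded-radius balls in the bounded-degree regime (if a high-degree vertex appears inside a ball, truncate the ball just before it, which only helps smallness of coboundary in the alternative branch of the dichotomy).

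I expect the main obstacle to be the bookkeeping that makes the dichotomy clean: namely, guaranteeing that when a ball $B_r(v)$ is \emph{not} $k$-deep we really do get exponential growth \emph{with bounded-degree internal vertices}, since a single high-degree vertex could inflate $|B_{i+1}(v)|$ without the "good" geometric growth we want — one must either (i) route around such vertices, showing their presence lets us cut out a small-coboundary set directly, or (ii) absorb them into the coboundary and argue the trimmed ball is still $k$-deep. Handling the purse alternative (the $2$-separation case) is the escape hatch for exactly these situations and for the low-degree base case, and threading it consistently through the growth argument is the delicate part; everything else is routine planarity sparsity plus a greedy independent-set extraction.
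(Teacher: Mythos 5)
Your central dichotomy is not valid as stated, and this is a structural problem rather than bookkeeping. If no ball $B_i(v)$ with $i\le r$ is $k$-deep, the growth estimate only yields $|B_r(v)|\ge (1+1/k)^r$, which for constant $r$ is a constant lower bound and is perfectly consistent with $v(G)$ being huge; nothing forces $|B_r(v)|>v(G)$, so no contradiction arises and no $k$-deep ball is produced. To force the ball past $v(G)$ you would need $r=\Omega(\log v(G))$, and then the $k$-deep set you stop at need not have constant size, so it is not a $C$-pocket. Worse, metric balls are simply the wrong objects: consider the planar graph formed by $m$ copies of $K_4$ all sharing one apex vertex $x$. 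Every non-apex vertex has degree $3$ (so your degree-$\le 2$ preprocessing does not apply), the radius-$1$ ball around such a vertex has coboundary of size about $3m$, and the radius-$2$ ball is the whole graph with empty coboundary; hence for $k\ge 4$ no ball of any radius around any such vertex is $k$-deep, there is no exponential growth to exploit, and the only wallet consists of purses (the hanging triangles with coboundary $\{x\}$), which your argument never finds. So the purse alternative cannot be confined to a low-degree base case; it is needed for linearly many bounded-degree pieces hanging on one or two cut vertices. Note also that ``coboundary of size two'' alone does not make a purse: the definition requires planarity of $G[S\cup V(H)]$ plus the edge on $S$, which can fail even inside a planar $G$ (take $G=K_5$ minus an edge $uv$ and $H$ the triangle on the remaining vertices).

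The paper's proof is global rather than ball-based, and this is exactly what your approach is missing: it applies the iterated Lipton--Tarjan separator theorem together with Mader-type sparsity to obtain $X\subseteq V(G)$ with $|X|\le \varepsilon v(G)$ such that every component of $G-X$ is a $C$-pocket, and then a counting argument (Euler's formula plus additivity of Euler genus under vertex/edge amalgamations) shows that the components that are neither purses nor $k$-deep contain at most $12k|X|+16k\,g(\Sigma)$ vertices in total; choosing $\varepsilon=\Theta(1/k)$ leaves at least half the vertices in components that are purses or $k$-deep, and these components are automatically pairwise non-touching, giving the wallet. The deep pockets there are components attached to few separator vertices, not balls around centers, which is why the separator machinery (or some substitute that finds small-coboundary constant-size sets of arbitrary shape) appears to be essential; your greedy non-touching extraction at the end is fine, but it is downstream of a step that does not hold.
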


We can extend Theorem~\ref{PlanarStructure} more generally to surfaces under the assumption the graph is large as follows.

\begin{thm}\label{SurfaceStructure}
For every $k\ge 1$ and surface $\Sigma$, there exists $C>0$ such that the following holds: if $G$ is a graph embeddable in $\Sigma$ with $v(G)\ge Cg(\Sigma)$, then there exists a $k$-deep $C$-wallet $\mathcal{H}$ of $G$.
\end{thm}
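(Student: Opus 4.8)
The plan is to reduce to the planar case, Theorem~\ref{PlanarStructure}, by cutting the surface open along short non-contractible curves. We may assume $G$ is $2$-cell embedded in $\Sigma$: otherwise re-embed $G$ in its minimum Euler genus surface $\Sigma'$, and since $g(\Sigma')\le g(\Sigma)$ the hypothesis $v(G)\ge C\,g(\Sigma')$ still holds, so we simply work with $\Sigma'$ in place of $\Sigma$. Let $C_0=C_0(k)$ be the constant furnished by Theorem~\ref{PlanarStructure}; we will take $C$ to be a suitable function of $k$ and $\Sigma$ with $C\ge 2C_0$.

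First I would invoke the classical fact that the face-width (representativity) of a graph $2$-cell embedded in a fixed surface of positive Euler genus on $n$ vertices is $O_\Sigma(\sqrt n)$, hence $o(v(G))$ (e.g.\ via the fact that large face-width forces a large grid minor, which then bounds $n$ below by a quadratic function of the face-width). Consequently, as long as a component $J$ of the current graph is $2$-cell embedded in a surface $S_J$ of Euler genus at least $1$, it contains a non-contractible noose meeting $J$ in a sublinear number of vertices. Cutting $J$ open along such a noose duplicates those (sublinearly many) vertices, keeps all faces disks (so the embedding stays $2$-cell), and, since the noose is non-contractible, strictly decreases the potential $\Phi=\sum_J g(S_J)^2$ (summed over the components $J$ and their embedding surfaces): for a non-separating noose the Euler genus of one component drops, and for a separating one a genus-$t$ component with $t\ge 2$ splits into components of genera $t_1,t_2\ge 1$ with $t_1+t_2=t$, and $t_1^2+t_2^2<t^2$. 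Since $\Phi\le g(\Sigma)^2$ initially, after $O_\Sigma(1)$ cuts we reach a graph $G'$ every component of which is planar. Bounding the running vertex count crudely by $2^{O_\Sigma(1)}v(G)$, each cut adds only $O_\Sigma(\sqrt{v(G)})$ vertices, so the total set $Z$ of cut-copies created satisfies $|Z|=O_\Sigma(\sqrt{v(G)})=o(v(G))$, and the un-cut original vertices of $G$ are precisely $V(G')\setminus Z=V(G)\setminus W$, where $W\subseteq V(G)$ is the set of cut vertices.

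Next I would apply Theorem~\ref{PlanarStructure} to the planar graph $G'$, obtaining a $k$-deep $C_0$-wallet $\mathcal H'$ with $|\mathcal H'|\ge\frac1{2C_0}v(G')\ge\frac1{2C_0}v(G)$, discard the at most $|Z|$ members of $\mathcal H'$ containing a vertex of $Z$, and regard the surviving pockets $\mathcal H$ as subgraphs of $G$ (their vertex sets lie in $V(G')\setminus Z\subseteq V(G)$). The key point, to be checked with care, is that each surviving $H$ remains a $C_0$-pocket of $G$ that is still a purse or $k$-deep, and that the survivors remain pairwise non-touching in $G$. This holds because cutting along nooses never changes edges, adjacencies, or degrees among $V(G)\setminus W$; the coboundary of $H$ in $G$ is obtained from its coboundary in $G'$ by collapsing cut-copies to their originals, so its size only decreases, preserving both the "$\le 2$" bound for purses and the $k$-deep inequality $|\text{coboundary}|\le\frac1k v(H)$; and for a purse, the required planarity of $G[S\cup V(H)]+\{uv:u\ne v\in S\}$ is preserved because collapsing a cut-copy to its original amounts to contracting the edge joining it to its sibling in the planar graph witnessing the purse in $G'$, and planarity survives edge contraction. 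Finally $|\mathcal H|\ge\frac1{2C_0}v(G)-|Z|\ge\frac1{4C_0}v(G)$ once $v(G)$ exceeds a threshold $n_0(k,\Sigma)$; taking $C:=\max\{2C_0,\ \lceil n_0(k,\Sigma)/g(\Sigma)\rceil+1\}$ ensures $v(G)\ge C\,g(\Sigma)\ge n_0$ and $\frac1{4C_0}v(G)\ge\frac1{2C}v(G)$, so $\mathcal H$ is a $k$-deep $C$-wallet of $G$.

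The step I expect to be the most delicate is the last paragraph's bookkeeping: verifying that a pocket or purse survives the "un-cutting" (in particular the purse condition, via the edge-contraction observation, and the monotonicity of coboundary size under collapsing copies), together with tracking the constants and the accumulation of cut-copies over the $O_\Sigma(1)$ rounds of cutting. The sole external ingredient is the $O(\sqrt n)$ bound on the face-width of graphs embedded in a fixed surface, which is standard.
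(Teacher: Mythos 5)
Your reduction is sound, but it takes a genuinely different route from the paper's. The paper never cuts the surface: it observes that the graphs embeddable in $\Sigma$ form a minor-closed family, applies the Lipton--Tarjan separator hierarchy (Lemma~\ref{Shatterer}) together with the linear edge bound to get $X$ with $|X|\le\varepsilon v(G)$ whose removal leaves only small, low-degree components (Corollary~\ref{LowDegShatterer}), and then proves directly (Lemma~\ref{ManyPursesOrKDeep}) that at most $12k|X|+16kg(\Sigma)$ vertices lie in components of $G-X$ that are neither purses nor $k$-deep; that lemma is the heart of the argument and uses only Euler's formula plus Miller's additivity of Euler genus under vertex/edge amalgamations, splitting the components by coboundary size one, two, or at least three. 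So the paper's proof is uniform in the genus and needs no surface topology beyond genus additivity, whereas yours trades that for the face-width bound, a noose-cutting induction governed by the potential $\sum_J g(S_J)^2$, and the ``un-cutting'' bookkeeping you flag. That bookkeeping does check out: a surviving pocket lies entirely among uncut vertices, so its edges and degrees are unchanged; its coboundary in $G$ is the image of its coboundary in $G'$ under collapsing copies, so its size only drops; and for a purse the two coboundary vertices in $G'$ are either copies of distinct originals (in which case the sibling copies have no neighbours in $V(H)$ and the planarity witnesses are isomorphic) or two copies of one original (your contraction of the added edge in the witness). What your approach buys is modularity --- the surface case becomes a black-box reduction to the planar case at the cost of $O_\Sigma(\sqrt{v(G)})$ discarded pockets --- at the price of importing the grid-minor/face-width machinery.

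One structural caveat: in the paper Theorem~\ref{PlanarStructure} is deduced as the special case of Theorem~\ref{SurfaceStructure}, so as written your argument is circular within the paper's logic; you need an independent proof of the planar case (which the same Lipton--Tarjan argument supplies with the genus terms dropped). Two minor repairs: your final $C$ divides by $g(\Sigma)$ and should be special-cased for the sphere, and after each cut you should re-embed each component in its minimal-genus surface so that the face-width bound (which presupposes a $2$-cell embedding) continues to apply.
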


\subsection{A Coloring Theorem}\label{ColThm}

Here we state our new coloring theorem but first we define our notion of deletable as follows.

\begin{definition}\label{DeletableDef}
We say an induced subgraph $H$ of a graph $G$ is \emph{$r$-deletable} in $G$ if for every list-assignment $L$ of $H$ such that $|L(v)|\ge r - (d_G(v)-d_H(v))$ for each $v\in V(H)$, $H$ has an $L$-coloring.
\end{definition}

\begin{definition}
We say a $C$-wallet $\mathcal{H}$ of a graph $G$ is $r$-deletable if every $H\in \mathcal{H}$ is $r$-deletable in $G$.
\end{definition}

We will prove that in graphs embedded on surfaces having girth at least $g\in\{3,4,5\}$, purses and $k$-deep pockets (for some large enough $k$) contain $(8-g)$-deletable subgraphs (Lemmas~\ref{PurseDeletable} and~\ref{DeepDeletable} respectively).  Combined with Theorem~\ref{SurfaceStructure}, we obtain the following result which we now state as our main coloring theorem as follows.

\begin{theorem}\label{DeletablePockets}
For each surface $\Sigma$, there exists $C > 0$ such that the following holds:
If $G$ is a graph embeddable in $\Sigma$ having girth at least $g\in \{3,4,5\}$ and $v(G)\ge Cg(\Sigma)$, then there exists an $(8-g)$-deletable $C$-wallet of $G$.
\end{theorem}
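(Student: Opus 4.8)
The plan is to combine the structure theorem (Theorem~\ref{SurfaceStructure}) with the two deletability lemmas referred to above (Lemmas~\ref{PurseDeletable} and~\ref{DeepDeletable}). First I would fix, for the given surface $\Sigma$ and girth value $g\in\{3,4,5\}$, a threshold depth $k=k(\Sigma,g)$ large enough that Lemma~\ref{DeepDeletable} applies, i.e.\ so that every $k$-deep pocket of a graph of girth at least $g$ embeddable in $\Sigma$ contains a connected induced subgraph that is $(8-g)$-deletable in the ambient graph; Lemma~\ref{PurseDeletable} handles purses with no depth hypothesis. Having fixed $k$, apply Theorem~\ref{SurfaceStructure} to obtain a constant $C_0=C_0(k,\Sigma)$ such that, provided $v(G)\ge C_0\,g(\Sigma)$, the graph $G$ admits a $k$-deep $C_0$-wallet $\mathcal{H}_0$.

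Next I would process the wallet. For each $H\in\mathcal{H}_0$, by definition of a $k$-deep $C_0$-wallet, $H$ is a $C_0$-pocket that is either a purse or $k$-deep; in particular $v(H)\le C_0$ and $d_G(v)\le C_0$ for all $v\in V(H)$, and $G$ has girth at least $g$. Applying Lemma~\ref{PurseDeletable} or Lemma~\ref{DeepDeletable} as appropriate yields a connected induced subgraph $H'\subseteq H$ that is $(8-g)$-deletable in $G$; since $V(H')\subseteq V(H)$, we still have $v(H')\le C_0$ and $d_G(v)\le C_0$ for every $v\in V(H')$, and $H'$ is induced in $G$ (being induced in the induced subgraph $H$), so $H'$ is a $C_0$-pocket. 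Now set $C=C_0$ and $\mathcal{H}=\{H':H\in\mathcal{H}_0\}$. Each member of $\mathcal{H}$ is a $C$-pocket; the members are pairwise non-touching since the members of $\mathcal{H}_0$ are and a subgraph of a non-touching pair is again non-touching; $|\mathcal{H}|=|\mathcal{H}_0|\ge \frac{1}{2C_0}v(G)=\frac{1}{2C}v(G)$; and each $H'\in\mathcal{H}$ is $(8-g)$-deletable in $G$. Hence $\mathcal{H}$ is an $(8-g)$-deletable $C$-wallet of $G$. Finally, note that the hypothesis $v(G)\ge C\,g(\Sigma)$ in the statement is exactly the hypothesis $v(G)\ge C_0\,g(\Sigma)$ needed to invoke Theorem~\ref{SurfaceStructure}, so the argument goes through; to cover all three girth values simultaneously one takes the maximum of the three resulting constants.

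The genuine difficulty is not in this assembly, which is essentially bookkeeping, but in the two lemmas it consumes: that any purse, and any sufficiently deep pocket of girth at least $g$, contains a constant-size constant-degree subgraph that is $(8-g)$-deletable in $G$ in the strong sense of Definition~\ref{DeletableDef}. Those proofs are where the deep results of the author and Thomas on (strongly) hyperbolic families and the linear bound on the size of critical graphs on surfaces enter. For the present theorem, the only subtlety to watch is the order of the quantifiers: $k$ must be chosen first (from the lemmas), then $C_0$ (from Theorem~\ref{SurfaceStructure}, depending on $k$ and $\Sigma$), and only then is $C$ pinned down, so that a single $C$ works uniformly for the given $\Sigma$.
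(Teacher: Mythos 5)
Your proposal is correct and follows essentially the same route as the paper: fix $k$ from Lemma~\ref{DeepDeletable}, obtain a $k$-deep $C$-wallet from Theorem~\ref{SurfaceStructure}, and replace each member by an $(8-g)$-deletable subgraph via Lemma~\ref{PurseDeletable} (purses) or Lemma~\ref{DeepDeletable} ($k$-deep pockets). The quantifier order you flag ($k$ before $C$) is exactly how the paper arranges it.
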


Theorem~\ref{DeletablePockets} is quite surprising. It implies that a minimum counterexample to the strong hyperbolicity of $5$-list-coloring or $3$-list-coloring graphs of girth five (e.g. for Theorem~\ref{OldStronglyHyperbolic}) has constant size and hence that theorem can be proved by brute-force methods. It also implies that minimum counterexamples to Thomassen's theorems that planar graphs are $5$-list-colorable and that planar graphs of girth at least five are $3$-list-colorable have constant size and hence that those theorems can also be proved by brute-force methods (instead of clever inductive arguments). 

\subsection{A Coloring Algorithm}\label{ColAlg}

Before stating our algorithm, we need one definition as follows.
\begin{definition}
If $H$ is a graph and $k\ge 1$ is an integer, then $H^k$ denotes the graph with $V(H^k)=V(H)$ and $E(H^k) = \{u\ne v\in V(H): u$ is at distance at most $k$ from $v\}$.
\end{definition}

\newpage

\noindent We are now ready to state our algorithm for both Theorem~\ref{SurfaceLinear} and~\ref{SurfaceLOCAL}:
\vskip.1in

\noindent Let $\Sigma$ be a fixed surface, let $C_0$ be as in Theorem~\ref{DeletablePockets} for $\Sigma$ and let $C=\max\{C_0,2\}$.
\vskip.1in

\begin{algorithm}
\caption{Finding an $L$-coloring of a graph $G$ embedded in $\Sigma$ for a type $345$ list-assignment $L$}
\label{Alg1}
\begin{algorithmic}[1]
\vskip.05in
\STATE {\bf if} $v(G)\le Cg(\Sigma)$ {\bf then} find and {\bf return} an $L$-coloring of $G$ by exhaustive search\\
\vskip.05in
\STATE {\bf for} each vertex $v$ of $G$ of degree at most $C$: \\
\vskip.05in
\hspace{10mm}Find (if one exists) an $(8-g)$-deletable $C$-pocket $H_v$ containing $v$ \\
\vskip.05in
Let $H=\bigcup_v H_v$
\vskip.05in
\STATE {\bf recurse} on $G_0 = G-V(H)$ to find an $L$-coloring $\psi_0$ of $G_0$
\vskip.05in
\STATE While recursing, also find (i.e. simultaneously for LOCAL) a $C^{2C}$-coloring $\phi$ of $H^{2C}$
\vskip.05in
\STATE {\bf for} each $i\in \{1, \ldots, C^{2C}\}$:
\begin{enumerate}
\item[(i)] Let $H_i = (\bigcup_v H_v: \phi(v)=i)$
\item[(ii)] Restrict the coloring $\psi_{i-1}$ to $V(G_{i-1}) \setminus V(H_i)$
\item[(iii)] Extend the restriction to a coloring $\psi_i$ of $G_i = G[V(G_{i-1})\cup V(H_i)]$ 
\end{enumerate}
\vskip.05in
\STATE {\bf return} $\psi_{C^{2C}}$
\end{algorithmic}
\end{algorithm}

\subsection{Outline of Paper}

In Section~\ref{Sec:Algorithm}, we prove Theorem~\ref{SurfaceLinear} and Theorem~\ref{SurfaceLOCAL}, namely, by proving the correctness and runtime complexity of Algorithm~\ref{Alg1}. In Section~\ref{Sec:Structure}, we prove Theorem~\ref{SurfaceStructure} and hence also its special case Theorem~\ref{PlanarStructure}. Finally in Section~\ref{Sec:Deletable}, we prove Theorem~\ref{DeletablePockets}.

\section{Algorithm Analysis}\label{Sec:Algorithm}

In this section, we prove the correctness, termination and runtime complexity of Algorithm~\ref{Alg1}. First, we need the following lemma to lower bound the size of $H$ in Step 2. Crucially, we need Theorem~\ref{DeletablePockets} to do this.

\begin{lemma}\label{SmallerRecursiveGraph}
Let $\Sigma$ be a surface and let $C$ be as in Theorem~\ref{DeletablePockets} for $\Sigma$. Let $G$ be a graph embeddable in $\Sigma$ having girth at lest $g\in\{3,4,5\}$ such that $v(G)\ge Cg(\Sigma)$. If $S$ is the set of all vertices in $G$ that are contained in an $(8-g)$-deletable $C$-pocket, then $|S| \ge \frac{v(G)}{2C}$.
\end{lemma}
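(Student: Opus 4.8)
The plan is to obtain the lemma as an essentially immediate consequence of Theorem~\ref{DeletablePockets}. First I would apply that theorem to $G$: since $G$ is embeddable in $\Sigma$, has girth at least $g \in \{3,4,5\}$, and $v(G) \ge Cg(\Sigma)$ with $C$ the constant furnished by Theorem~\ref{DeletablePockets} for $\Sigma$, we obtain an $(8-g)$-deletable $C$-wallet $\mathcal{H}$ of $G$.

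Next I would unpack what this structure provides. By the definition of a $C$-wallet, every $H \in \mathcal{H}$ is a $C$-pocket, the members of $\mathcal{H}$ are pairwise non-touching, and $|\mathcal{H}| \ge \frac{1}{2C}v(G)$. Since $\mathcal{H}$ is moreover $(8-g)$-deletable, each $H \in \mathcal{H}$ is $(8-g)$-deletable in $G$. Hence every vertex lying in some $H \in \mathcal{H}$ is contained in an $(8-g)$-deletable $C$-pocket, i.e. $\bigcup_{H \in \mathcal{H}} V(H) \subseteq S$. Here I would explicitly note that being non-touching implies in particular that the vertex sets $V(H)$, $H \in \mathcal{H}$, are pairwise disjoint.

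Finally I would count. Each $H \in \mathcal{H}$ is a $C$-pocket and hence connected, so $|V(H)| \ge 1$; combining this with the pairwise disjointness just noted gives $|S| \ge \left| \bigcup_{H \in \mathcal{H}} V(H) \right| = \sum_{H \in \mathcal{H}} |V(H)| \ge |\mathcal{H}| \ge \frac{1}{2C}v(G)$, which is the claimed bound.

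I do not expect a genuine obstacle here: the entire content of the lemma is carried by Theorem~\ref{DeletablePockets}, and the only thing to verify is the bookkeeping that each pocket in the wallet contributes at least one vertex to $S$ and that distinct pockets contribute disjoint sets of vertices. The single point worth stating carefully is that ``non-touching'' entails ``vertex-disjoint'', so that the union bound above is actually an equality and no overcounting occurs.
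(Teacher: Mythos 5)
Your proposal is correct and follows essentially the same route as the paper: apply Theorem~\ref{DeletablePockets} to obtain an $(8-g)$-deletable $C$-wallet, observe its vertices lie in $S$, and use the pairwise vertex-disjointness together with $|\mathcal{H}|\ge \frac{1}{2C}v(G)$ to conclude. The only difference is that you spell out the steps (non-touching implies vertex-disjoint, each pocket is nonempty) that the paper leaves implicit.
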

\begin{proof}
Since $v(G)\ge Cg(\Sigma)$, we have by Theorem~\ref{DeletablePockets} that there exists an $(8-g)$-deletable $C$-wallet $\mathcal{H}$ of $G$. By definition then $|\{H \in \mathcal{H}\}| \ge \frac{1}{2C}v(G)$. Hence $\sum_{H\in \mathcal{H}} v(H) \ge \frac{1}{2C}v(G)$. Yet $\bigcup_{H\in \mathcal{H}} V(H)\subseteq S$. Since the subgraphs in $\mathcal{H}$ are pairwise vertex-disjoint, it follows that $|S|\ge \frac{1}{2C}v(G)$.
\end{proof}

Lemma~\ref{SmallerRecursiveGraph} implies an upper bound on $G_0$ in Step 2 as follows.

\begin{corollary}\label{ProportionallySmaller}
If $G$ is as in Algorithm~\ref{Alg1} and $G_0$ is defined as in Step 2, then $v(G_0)\le \left(1-\frac{1}{2C}\right)v(G)$.
\end{corollary}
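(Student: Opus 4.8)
The plan is to derive the corollary directly from Lemma~\ref{SmallerRecursiveGraph} by identifying the vertex set $V(H)$ removed in Step 2 with the set $S$ of vertices lying in some $(8-g)$-deletable $C$-pocket. First I would observe that the corollary only concerns the situation in which Step 2 is actually executed, so we may assume the algorithm did not already return in Step 1; hence $v(G) > Cg(\Sigma)$ and in particular $v(G)\ge Cg(\Sigma)$, which is exactly the size hypothesis needed to invoke Lemma~\ref{SmallerRecursiveGraph}. Also, since $L$ is a type $345$ list-assignment, $G$ has girth at least $g\in\{3,4,5\}$ where $8-g$ is the relevant list-size, so the remaining hypotheses of Lemma~\ref{SmallerRecursiveGraph} hold as well.

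Next I would verify that $S\subseteq V(H)$. Every vertex $u$ of an $(8-g)$-deletable $C$-pocket $H'$ satisfies $d_G(u)\le C$, simply because $H'$ is a $C$-pocket. Thus each $u\in S$ has degree at most $C$ and is therefore considered in the for-loop of Step 2; moreover, by the very definition of $S$, an $(8-g)$-deletable $C$-pocket containing $u$ exists, so the algorithm selects some such pocket $H_u$ with $u\in V(H_u)\subseteq V(H)$. This gives $S\subseteq V(H)$. (In fact equality holds, since each selected $H_v$ is an $(8-g)$-deletable $C$-pocket and hence $V(H_v)\subseteq S$, but only the inclusion $S\subseteq V(H)$ is needed here.)

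Finally I would combine these facts: $v(G_0)=v(G)-|V(H)|\le v(G)-|S|$, and by Lemma~\ref{SmallerRecursiveGraph} we have $|S|\ge \frac{v(G)}{2C}$, whence $v(G_0)\le \left(1-\frac{1}{2C}\right)v(G)$, as claimed.

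There is essentially no hard step here; the only point requiring care is the second paragraph, namely confirming that the ad hoc construction of $H$ in Step 2 of Algorithm~\ref{Alg1} captures \emph{all} of $S$ and not merely a subset of it. This rests on the (immediate) remark that membership in a $C$-pocket forces degree at most $C$, so no vertex of $S$ is overlooked by the degree restriction in the for-loop.
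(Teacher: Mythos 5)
Your proposal is correct and follows essentially the same route as the paper: invoke Lemma~\ref{SmallerRecursiveGraph} and identify the deleted set $V(H)$ from Step 2 with the set $S$ of vertices in $(8-g)$-deletable $C$-pockets. The only difference is that you explicitly justify the inclusion $S\subseteq V(H)$ (noting that membership in a $C$-pocket forces degree at most $C$, so the for-loop misses no vertex of $S$), a point the paper treats as immediate by asserting $V(H)$ is precisely that set.
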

\begin{proof}
Since $V(H)$ is precisely the set of vertices contained in an $(8-g)$-deletable $C$-pocket, we find by Lemma~\ref{SmallerRecursiveGraph} that $v(H)\ge \frac{1}{2C}v(G)$. Hence $v(G_0)\le v(G)-v(H) \le \left(1-\frac{1}{2C}\right)v(G)$ as desired.
\end{proof}

We are now ready to prove the termination and correctness of Algorithm~\ref{Alg1}.

\begin{lemma}\label{AlgorithmTerminationCorrectness}
Algorithm~\ref{Alg1} terminates and returns an $L$-coloring of $G$.
\end{lemma}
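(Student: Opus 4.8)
The proof is a straightforward induction on $v(G)$, tracking the invariant that each graph passed to a recursive call is embeddable in $\Sigma$, has girth at least $g$, and carries a type $345$ list-assignment. First I would handle the base case: if $v(G) \le Cg(\Sigma)$, Step 1 performs an exhaustive search; since $G$ is embeddable in $\Sigma$ and has girth at least $g$, and $L$ is type $345$ for $G$, such a coloring exists by Thomassen's theorems (for $\Sigma$ the sphere) or the results of Postle--Thomas and Postle on surfaces (in general, assuming a coloring exists, which is part of the hypothesis of Theorems~\ref{SurfaceLinear} and~\ref{SurfaceLOCAL}), so the exhaustive search succeeds and the algorithm terminates. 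For the inductive step, assume $v(G) > Cg(\Sigma)$. Then Corollary~\ref{ProportionallySmaller} gives $v(G_0) \le (1 - \frac{1}{2C})v(G) < v(G)$, so the recursion in Step 3 is on a strictly smaller graph. I would observe that $G_0 = G - V(H)$ is an induced subgraph of $G$, hence embeddable in $\Sigma$ and of girth at least $g$, and that the restriction of $L$ to $G_0$ is still a $(8-g)$-list-assignment, so it is type $345$ for $G_0$; by induction the recursive call returns an $L$-coloring $\psi_0$ of $G_0$. Termination follows since $v(G)$ strictly decreases and is bounded below.

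**Correctness of the extension phase.** The heart of the argument is that Step 5 correctly extends $\psi_0$ to all of $G$. The key point is that $H$ is a disjoint union of $(8-g)$-deletable $C$-pockets $H_v$, and $\phi$ is a proper coloring of $H^{2C}$ with color classes $H_1, \dots, H_{C^{2C}}$. I would argue by a secondary induction on $i$ that after iteration $i$, $\psi_i$ is a proper $L$-coloring of $G_i = G[V(G_{i-1}) \cup V(H_i)]$. The crucial structural fact is that within one color class $H_i$, the constituent pockets $H_v$ are pairwise at distance greater than $2C$ in $H$ — in particular pairwise non-adjacent — so $H_i$ is a disjoint union of pockets $H_v$ each of which is $(8-g)$-deletable in $G$, and moreover each $H_v$ has no neighbor in $G_i$ outside of $G_{i-1}$ except possibly other vertices of the same $H_v$. (Here I would need the pockets to be vertex-disjoint and to have the non-touching property within a class; since distinct $H_v$'s may overlap as $v$ ranges over all low-degree vertices, some care is needed — but the coloring of $H^{2C}$ is precisely engineered so that the $H_v$'s retained in class $i$ are pairwise non-touching, and overlapping $H_v$'s receive the same color and are merged.) Then extending $\psi_{i-1}$ restricted to $V(G_{i-1}) \setminus V(H_i)$ across $H_i$ reduces to extending across each $H_v$ independently.

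**The deletability step.** For each such $H_v$, I would verify that the list-assignment $L'$ defined by $L'(u) = L(u) \setminus \{\psi(w): w \in N_{G_i}(u) \setminus V(H_v)\text{ already colored}\}$ satisfies $|L'(u)| \ge |L(u)| - (d_G(u) - d_{H_v}(u)) \ge (8-g) - (d_G(u) - d_{H_v}(u))$ for every $u \in V(H_v)$, since $L$ is an $(8-g)$-list-assignment and each already-colored neighbor of $u$ outside $H_v$ contributes at most one forbidden color while the count of such neighbors is at most $d_G(u) - d_{H_v}(u)$. By Definition~\ref{DeletableDef} and the $(8-g)$-deletability of $H_v$ in $G$, the graph $H_v$ has an $L'$-coloring, which combines with the existing coloring to a proper $L$-coloring; doing this for all $H_v$ in class $i$ yields $\psi_i$. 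After the final iteration $i = C^{2C}$, every vertex of $H$ has been colored (every low-degree vertex $v$ lies in some $H_v$, which lies in some color class), and no high-degree vertex of $G$ lies in any $H_v$ — wait, here I should be careful: $H = \bigcup_v H_v$ by definition, and $V(G) = V(G_0) \cup V(H)$, so $G_{C^{2C}} = G$ and $\psi_{C^{2C}}$ is an $L$-coloring of all of $G$, as returned in Step 6.

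**Main obstacle.** The delicate point — and the one I would spend the most care on — is the bookkeeping around the fact that the pockets $H_v$ for different low-degree vertices $v$ need not be disjoint. The role of the $C^{2C}$-coloring $\phi$ of $H^{2C}$ is exactly to partition the (overlapping, touching) family $\{H_v\}$ into $C^{2C}$ subfamilies each of which, after discarding redundancy, consists of pairwise non-touching pockets, so that the extension across one color class decomposes into independent local extensions. Verifying that such a $\phi$ exists and has the stated number of colors uses that $H^{2C}$ has bounded degree: each $H_v$ has at most $C$ vertices each of degree at most $C$ in $G$, so $H$ has maximum degree at most $C$, and hence $H^{2C}$ has maximum degree at most $C^{2C}$, so it is properly $C^{2C}$-colorable greedily (indeed $(C^{2C}+1)$-colorable, but the exact constant is immaterial — one may inflate $C$ slightly, or note $\Delta(H^{2C}) < C^{2C}$). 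The remaining claims — that $G_0$, $G_i$ inherit embeddability, girth, and a type $345$ list-assignment, and that deletability of $H_v$ in $G$ transfers to the intermediate graphs $G_i$ since $H_v$ keeps all its $G$-neighbors among the colored part — are routine once this partition structure is in place.
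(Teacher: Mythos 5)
Your proposal is correct and follows essentially the same argument as the paper's proof: induction on $v(G)$, exhaustive search in the base case (using that an $L$-coloring exists by Thomassen's theorems in the planar case and by hypothesis otherwise), Corollary~\ref{ProportionallySmaller} to make the recursion legitimate, the greedy bound $\Delta(H^{2C})<C^{2C}$ for Step 4, and, within each color class of $\phi$, independent extension across pairwise non-touching $(8-g)$-deletable pockets with exactly the list-size accounting $|L'(u)|\ge (8-g)-(d_G(u)-d_{H_v}(u))$ that the paper uses. The one slip is your parenthetical claim that overlapping $H_v$'s ``receive the same color and are merged'': overlapping pockets have centers within distance $2(C-1)\le 2C$, so $\phi$ assigns them \emph{different} colors and they are handled in different iterations, with the restriction in Step 5(ii) uncoloring and recoloring the shared vertices — but since your argument only uses within-class non-touching together with that restriction step, this aside is immaterial.
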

\begin{proof}
We prove this by induction on $v(G)$. If $v(G)\le Cg(\Sigma)$ as tested in Step 1, then the algorithm returns a coloring by exhaustive search. That search runs in constant time. Note that for planar graph an $L$-coloring always exists since $L$ is a type $345$ list-assignment (this follows from the results of Thomassen mentioned in the introduction; see Lemma~\ref{PurseDeletable} for more details). As for graphs embedded in surfaces other than the plane, we assumed that $G$ has an $L$-coloring and hence so does every subgraph of $G$. 

So we may assume $v(G)\ge Cg(\Sigma)$. Step 2 is simply definitional for the purposes of terminations and correctness. By Corollary~\ref{ProportionallySmaller}, $v(G_0)\le \left(1-\frac{1}{2C}\right)v(G) < v(G)$. Hence by induction, Step 3 returns an $L$-coloring $\psi_0$ of $G_0$.

As for Step 4, we note that $\Delta(H)\le C$ since every vertex in $H$ has degree at most $C$ in $G$. Since $C\ge 2$, it follows that $\Delta(H^{2C}) < C^{2C}$. Thus $H^{2C}$ has a $C^{2C}$-coloring using the greedy bound of $\Delta(H^{2C})+1$. 

As for Step 5, we now prove by induction that for each $i\in \{1,\ldots, C^{2C}\}$ there exists an $L$-coloring $\psi_i$ of $G_i$ as in Step 5(iii) whose restriction to $V(G_{i-1})\setminus V(H_i)$ agrees with $\psi_{i-1}$. Let $u\ne v \in V(H)$ such that $\phi(u)=\phi(v)=i$. Since $v(H_u),v(H_v)\le C$ and the distance between $u$ and $v$ in $G$ is at least $2C+1$ as $\phi(u)=\phi(v)$, we have that $H_u$ and $H_v$ are non-touching. Thus the set of subgraphs $\mathcal{H} = \{H_v: \phi(v) =i\}$ are pairwise non-touching. 

Now we define a list assignment $L_i$ of $H_i$ as follows. For each $v\in V(H_i)$, let $L_i(v) = L(v)\setminus \{\psi_{i-1}(u): u\in N(v)\cap (V(G_{i-1})\setminus V(H_i))\}$. Now $|L_i(v)|\ge |L(v)| - (d_G(v) - d_{H_i}(v)) = 8-g - (d_G(v) - d_{H_i}(v))$. Let $H_v\in \mathcal{H}$. Since $H_v$ is $(8-g)$-deletable in $G$, it follows by definition that there exists an $L_i$-coloring $\psi_v$ of $H_v$. Now we define $\psi_i$ as follows: let $\psi_i(w) = \psi_{i-1}(w)$ if $w\in V(G_{i-1})\setminus V(H_i)$ and otherwise let $\psi_i(w) = \psi_v(w)$ where $w\in H_v$. By the definition of $L_i$ and since the $H_v$ are pairwise non-touching, it follows that $\psi_i$ is an $L$-coloring of $G_i$ as desired.

Hence in Step 6, Algorithm~\ref{Alg1} returns $\psi_{C^{2C}}$ which is an $L$-coloring of $G_{C^{2C}} = G$ as desired.
\end{proof}

Next we prove that Algorithm~\ref{Alg1} runs in linear-time in the centralized setting as follows.

\begin{lemma}\label{AlgorithmLinear}
Algorithm~\ref{Alg1} runs in $O(v(G))$-time.
\end{lemma}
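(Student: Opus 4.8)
The plan is to analyze Algorithm~\ref{Alg1} step by step and show each step contributes at most linear time, then close the recursion with the geometric decrease guaranteed by Corollary~\ref{ProportionallySmaller}. First I would handle the base case: when $v(G)\le Cg(\Sigma)$, the graph has constant size (since $\Sigma$ and hence $g(\Sigma)$ is fixed), so the exhaustive search in Step~1 runs in $O(1)$ time. Next, for Step~2, I would argue that for each vertex $v$ of degree at most $C$, searching for an $(8-g)$-deletable $C$-pocket containing $v$ can be done in $O(1)$ time: any such pocket lies within the ball of radius $C$ around $v$, which has at most $1+C+C^2+\cdots+C^{C}$ vertices (a constant), so one can enumerate all connected subgraphs of that ball of size at most $C$ and, for each, test $(8-g)$-deletability by checking all relevant list-assignments (there are only constantly many up to the colors that matter, or one invokes a constant-size decision procedure). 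Hence Step~2 takes $O(v(G))$ time total, and building $H=\bigcup_v H_v$ likewise. I should be a little careful here about what model of computation and what data structures are assumed (adjacency lists with degree information), but the key point is that locality plus constant size makes each per-vertex operation $O(1)$.

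For Step~3, the recursion is on $G_0=G-V(H)$; I would let $T(n)$ denote the running time on an $n$-vertex instance and record the recurrence $T(n)\le T(v(G_0)) + O(n)$. For Step~4 (computing a $C^{2C}$-coloring $\phi$ of $H^{2C}$), in the centralized setting this is cheap: $H^{2C}$ has maximum degree $<C^{2C}$ by the argument already given in the proof of Lemma~\ref{AlgorithmTerminationCorrectness}, and it can be constructed and greedily colored in $O(v(H))=O(n)$ time since each vertex's $2C$-ball has constant size; the ``while recursing / simultaneously'' phrasing only matters for the LOCAL analysis, so for this lemma I would simply note it costs $O(n)$. Then Step~5 runs $C^{2C}$ iterations; crucially $C^{2C}$ is a constant depending only on $\Sigma$. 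In iteration $i$, forming $H_i$, restricting $\psi_{i-1}$, computing the punctured lists $L_i$, and extending to each $H_v$ with $\phi(v)=i$ each touch only the pockets and their neighborhoods — all constant-size — so the whole iteration is $O(n)$, and Step~5 is $O(n)$ overall. Step~6 is trivial.

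Putting this together gives the recurrence $T(n)\le T\!\left(\left(1-\frac{1}{2C}\right)n\right) + O(n)$ for $n\ge Cg(\Sigma)$, with $T(n)=O(1)$ for $n< Cg(\Sigma)$. Since the subproblem size shrinks by a constant factor $1-\frac{1}{2C}<1$, the sizes across the recursion tree (which has depth $O(\log n)$ but geometrically decreasing sizes, not a branching tree) sum to a geometric series $\sum_{j\ge 0}\left(1-\frac{1}{2C}\right)^j n = O(n)$. Hence $T(n)=O(v(G))$, as required.

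The main obstacle I expect is Step~2: making fully rigorous the claim that finding an $(8-g)$-deletable $C$-pocket containing a given low-degree vertex is an $O(1)$-time operation. One has to be precise that (a) any $(8-g)$-deletable $C$-pocket $H_v$ containing $v$ is contained in the radius-$C$ ball around $v$ (since $H_v$ is connected with at most $C$ vertices and contains $v$), so only constantly many candidate vertex sets need to be examined; (b) checking whether a fixed candidate subgraph $H$ is $(8-g)$-deletable in $G$ — i.e., that for every list-assignment $L$ with $|L(w)|\ge (8-g)-(d_G(w)-d_H(w))$ the graph $H$ has an $L$-coloring — reduces to finitely many cases, because only the sizes of the lists and not the actual colors matter (one may assume the lists are subsets of a fixed palette of size $O(C)$), so there are $O(1)$ list-assignments to test and each test is an $O(1)$-time coloring check on a constant-size graph. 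The only other subtlety worth a sentence is confirming that the constants $C$ and $C^{2C}$ genuinely depend only on $\Sigma$ (via Theorem~\ref{DeletablePockets}) and not on $G$, so that ``constant'' is legitimate throughout; this is immediate from how $C$ was fixed just before Algorithm~\ref{Alg1}.
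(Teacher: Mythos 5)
Your proposal is correct and follows essentially the same route as the paper: constant work per vertex in Step~2 (constant-size, constant-degree, connected candidates and finitely many list-assignments up to relabeling), linear time for Steps~3--5, and then closing the recursion via Corollary~\ref{ProportionallySmaller}, with the only cosmetic difference being that you sum the geometric series $\sum_j (1-\tfrac{1}{2C})^j n$ explicitly while the paper phrases the same bound as an induction showing $T(n)\le an$ with $a\ge B/(1-r)$. The extra care you take in Step~2 (confining candidates to a bounded-radius ball through low-degree vertices, and reducing deletability to a bounded palette) is a harmless elaboration of what the paper asserts more tersely.
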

\begin{proof}
We proceed by induction on $n=V(G)$. We prove the runtime is $an$ for some constant $a$ as follows. Specifically $a=2CB$ where $B$ is as defined below.

We note that Step 1 runs in constant time. As for Step 2, we note that deciding if a given constant-size subgraph is $r$-deletable can be done in constant time (since it is necessary to only a check a finite number of list assignments). Thus for each vertex $v$, deciding if an $H_v$ exists and finding one if it does can be done in constant time (since there are only a constant number of candidates as they have constant-size, have constant-degree in $G$, are connected and contain $v$). Hence Step 2 runs in $O(n)$ time (admittedly for some very large constant). 

Finding $G_0$ for use in Step 3 can also be done in linear time (since planar graphs have at most $3n$ edges). Similarly for Step 4, finding $H$ and $H^{2C}$ can be done in linear time (since $H$ has maximum degree at most $C$). 

Step 4 then runs in $C^{2C}n$ time since $\Delta(H^{2C}) < C^{2C}$ and a $(\Delta(H)+1)$-coloring of $H$ can be found in $(\Delta(H)+1)n$ time by greedy coloring. As for Step 5, for each $i\in \{1,\ldots, C^{2C}\}$, steps 5(ii) and 5(iii) can be done in linear time. Hence step 5 runs in linear-time.
 
Altogether each run of the algorithm (not counting recursion) is linear-time, specifically at most $Bn$ for some universal constant $B$ (depending only on $C$). By Corollary~\ref{ProportionallySmaller}, $v(G_0) \le \left(1-\frac{1}{2C}\right)n$. Let $r=1-\frac{1}{2C}$. By induction, Step 3 takes $a|V(G_0)|=a(rn)$ time. Hence the total runtime of the algorithm is at most $Bn + arn = (B+ar)n$ which is at most $an$ as desired if $B+ar \le a$, that is, $a \ge \frac{B}{1-r}$.
\end{proof}

Next we prove the runtime of Algorithm~\ref{Alg1} in the deterministic distributed setting.

\begin{lemma}\label{AlgorithmLOCAL}
Algorithm~\ref{Alg1} runs in $O(\log v(G))$-rounds in the deterministic distributed setting.
\end{lemma}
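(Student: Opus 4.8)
The plan is to mirror the structure of the linear-time proof (Lemma~\ref{AlgorithmLinear}) but tracking \emph{round complexity} rather than work, so I would again argue by induction on $v(G)$, showing that a single level of recursion costs $O(1)$ rounds (plus one $O(\log^* v(G))$ term that is incurred only once), and that the recursion depth is $O(\log v(G))$ by Corollary~\ref{ProportionallySmaller}. First I would observe that, because $C$ is a fixed constant and $\Delta(G)$ may be unbounded, the only steps whose round cost is not trivially constant are Step~2 (each vertex of degree at most $C$ searching its own constant-radius ball for an $(8-g)$-deletable $C$-pocket), Step~4 (computing the $C^{2C}$-coloring $\phi$ of $H^{2C}$), and the extension loop in Step~5. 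For Step~2: a $C$-pocket containing $v$ lies entirely within the radius-$C$ ball around $v$, and whether a given candidate subgraph is $(8-g)$-deletable depends only on that subgraph (a finite check), so every vertex can compute $H_v$ after gathering its radius-$C$ neighborhood, which takes $C = O(1)$ rounds; the deleted vertices then know which $H_v$'s they belong to and, crucially, $\Delta(H) \le C$ so $H$ (and $H^{2C}$) have bounded degree.

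Next I would handle Step~4, which is the one genuinely delicate point and the place where the $O(\log^* v(G))$ subtlety of the proof overview bites. Since $\Delta(H^{2C}) < C^{2C}$ is a constant, a $C^{2C}$-coloring of $H^{2C}$ is just a proper coloring of a bounded-degree graph, computable in $O(\log^* v(G))$ rounds by the classical algorithm of Cole--Vishkin / Linial (or by the Fraigniaud--Heinrich--Kosowski bound cited in the overview, which here is $\tilde O(\sqrt{\Delta}) + O(\log^* n) = O(\log^* v(G))$). If this coloring were recomputed at every level of the recursion the total would be $O(\log v(G)\log^* v(G))$. The fix, exactly as flagged in the proof overview, is that Step~4 is performed \emph{concurrently with} the recursive call in Step~3: the vertices of $H$ are idle while the recursion on $G_0$ proceeds, so they spend those rounds computing $\phi$. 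Since the recursion on $G_0$ takes $\Omega(\log v(G_0)) \ge \Omega(\log^* v(G))$ rounds for $v(G)$ large (and for small $v(G)$ Step~1 handles everything in $O(1)$ rounds by exhaustive search on a bounded-size graph), the $\phi$-computation finishes before the recursion returns, so it contributes nothing extra to the round count at each level.

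Then I would dispose of Step~5: there are $C^{2C} = O(1)$ iterations, and in iteration $i$ every vertex of $H_i$ needs only the colors $\psi_{i-1}$ assigned to its neighbors in $V(G_{i-1}) \setminus V(H_i)$ and a local, constant-size coloring extension over its own $(8-g)$-deletable pocket $H_v$; by the $\phi$-coloring the pockets $H_v$ with $\phi(v) = i$ are pairwise non-touching (distance $> 2C$), so these extensions can all be carried out in parallel in $O(C) = O(1)$ rounds, and every pocket's vertices can compute an $L_i$-coloring locally since the pocket has constant size. Hence one level of recursion costs $O(1)$ rounds (ignoring the overlapped $\phi$-computation). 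Finally, unrolling: by Corollary~\ref{ProportionallySmaller}, $v(G_0) \le (1 - \tfrac{1}{2C}) v(G)$, so after $O(\log v(G))$ levels we reach the base case $v(G) \le Cg(\Sigma) = O(1)$; each level adds $O(1)$ rounds, plus a single $O(\log^* v(G))$ term that overlaps the recursion, giving a total of $O(\log v(G)) + O(\log^* v(G)) = O(\log v(G))$ rounds.

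The main obstacle, and the one I would write out most carefully, is the concurrency argument for Step~4: one must make precise that the $O(\log^* v(G))$-round $\phi$-computation can be scheduled inside the window provided by the recursive call without interfering with it (the recursion runs on $G_0 = G - V(H)$ while $\phi$ lives on $H$, so there is no conflict on vertices, but one should note that a vertex of $G_0$ adjacent to $H$ can simply ignore the $\phi$-messages), and that the recursion depth / round budget of the sub-call is large enough to absorb the $O(\log^* v(G))$ term at the top level; everything else is a routine bounded-degree / bounded-radius locality argument.
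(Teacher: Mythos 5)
Your proposal is correct and follows essentially the same route as the paper: constant-round locality arguments for Steps 1, 2 and 5, an $O(\log^* n)$-round proper coloring of the bounded-degree graph $H^{2C}$ (the paper uses Fraigniaud--Heinrich--Kosowski) scheduled concurrently with the recursive call, and Corollary~\ref{ProportionallySmaller} to bound the recursion depth by $O(\log v(G))$. (One small caveat: your claim that the sub-call lasts $\Omega(\log v(G_0))\ge\Omega(\log^* v(G))$ rounds is not guaranteed, since $v(G_0)$ has no lower bound; the conclusion still holds via the recurrence $T(n)\le \max\bigl\{T\bigl((1-\tfrac{1}{2C})n\bigr),\,O(\log^* n)\bigr\}+O(1)$, and the paper's own proof is no more detailed on this point.)
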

\begin{proof}
We proceed by induction on $n=V(G)$.

Step 1 runs in $Cg(\Sigma)$ rounds (i.e.~a constant number). Step 2 runs in $C$ rounds (i.e.~a constant number). For Step 4, we use the deterministic distributed algorithm of Fragniaud, Heinrich and Kosowski~\cite{FHK} to find a $(\Delta(H^{2C})+1)$-coloring of $H^{2C}$ in $\tilde{O}(\sqrt{\Delta}) + O(\log^* n)$ rounds. Since $\Delta(H)< C^{2C}$, this yields a $C^{2C}$-colorng $\phi$ of $H^{2C}$ as desired. The number of rounds required for Step 3 is thus $\tilde{O}(C^{C}) + O(\log^* n) = O(\log^* n)$. As for Step 5, for each $i\in \{1,\ldots, C^{2C}\}$, steps 5(i)-(iii) run in $C$ rounds. Hence Step 5 runs in $C^{2C+1}$ rounds (i.e.~a constant number). 

By Corollary~\ref{ProportionallySmaller}, $v(G_0) \le \left(1-\frac{1}{2C}\right)n$. Thus the recursive step (Step 3) takes by induction $O(\log(v(G_0)) = O(\left(1-\frac{1}{2C}\right)n)$ rounds. Since Step 4 is performed separately while the recursion runs, the total runtime is $O(\log^* n)+O(\log n) = O(\log n)$ rounds.
\end{proof}

Finally, we have that Theorem~\ref{SurfaceLinear} and~\ref{SurfaceLOCAL} follow immediately from the above lemmas.

\begin{proof}[Proof of Theorem~\ref{SurfaceLinear}]
Follows from Lemmas~\ref{AlgorithmTerminationCorrectness} and~\ref{AlgorithmLinear}.
\end{proof}

\begin{proof}[Proof of Theorem~\ref{SurfaceLOCAL}]
Follows from Lemmas~\ref{AlgorithmTerminationCorrectness} and~\ref{AlgorithmLOCAL}.
\end{proof}

\section{Surface Structure Theorem}\label{Sec:Structure}

Theorems~\ref{PlanarStructure} and its generalization Theorem~\ref{SurfaceStructure} are mostly straightforward consequences of the following theorem of Lipton and Tarjan~\cite{LipTar} about the separator hierarchy.

\begin{lemma}\label{Shatterer}
For every proper minor-closed family $\mathcal{F}$ and every $\varepsilon > 0$, there exists a constant $C$ such that the following holds:
If $G$ is a graph in $\F$, then there exists $X\subseteq V(G)$ with $|X|\le \varepsilon v(G)$ such that every component of $G-X$ has size at most $C$.
\end{lemma}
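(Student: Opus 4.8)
The plan is to derive this from the $O(\sqrt{v(G)})$-vertex separator theorem for proper minor-closed families (due to Alon, Seymour and Thomas, generalizing the planar case of Lipton and Tarjan): there is a constant $c=c(\F)$ such that every $G\in\F$ admits $S\subseteq V(G)$ with $|S|\le c\sqrt{v(G)}$ for which every component of $G-S$ has at most $\tfrac23 v(G)$ vertices. Since $\F$ is minor-closed, every such component again lies in $\F$, so the separator theorem can be applied recursively.

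First I would build a recursion tree $T$: the root is $G$; at an internal node with subproblem $G'$ we apply the separator theorem, record its separator $S_{G'}$, and create one child for each component of $G'-S_{G'}$; a node becomes a leaf as soon as its subproblem has at most $C$ vertices. This terminates because subproblem sizes strictly decrease along every root-to-leaf path, shrinking by a factor at most $\tfrac23$ per step, so the depth is at most $\log_{3/2}(v(G)/C)$. Set $X=\bigcup_{G'\text{ internal}}S_{G'}$. A straightforward induction down the tree shows every component of $G-X$ is contained in the vertex set of some leaf, hence has at most $C$ vertices; so it remains only to bound $|X|$.

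The crux is showing $|X|=O(v(G)/\sqrt{C})$ without picking up a spurious factor of $\log v(G)$. I would group the internal nodes of $T$ by size scale: for $t\ge 0$, let $S_t$ be the set of internal nodes whose subproblem has between $(\tfrac23)^{t+1}v(G)$ and $(\tfrac23)^{t}v(G)$ vertices. The key observation is that no two distinct nodes of $S_t$ are in an ancestor--descendant relation (a descendant's subproblem has at most $\tfrac23$ the size of its ancestor's, which is incompatible with both sizes lying in the same window); hence the subproblems of the nodes of $S_t$ are pairwise vertex-disjoint, so their sizes sum to at most $v(G)$, and in particular $|S_t|=O((\tfrac32)^{t})$. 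Each such node contributes at most $c\sqrt{(\tfrac23)^{t}v(G)}$ to $X$, so scale $t$ contributes $O(\sqrt{v(G)}\,(\tfrac32)^{t/2})$ in total. Summing this geometric-type series over the $O(\log(v(G)/C))$ relevant values of $t$ is dominated by its largest term, at the scale where subproblems have size about $C$, giving $|X|=O(\sqrt{v(G)}\cdot\sqrt{v(G)/C})=O(v(G)/\sqrt{C})$. Choosing $C$ to be a large enough constant — of the form $\lceil (c''/\varepsilon)^2\rceil$ with $c''$ absorbing the absolute constants above (which depend only on $\F$) — makes this at most $\varepsilon v(G)$, as required.

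The main obstacle I anticipate is exactly this accounting: a level-by-level summation that only uses ``each level of $T$ contributes $O(v(G)/\sqrt C)$'' loses a $\log v(G)$ factor, which would force $C$ to depend on $v(G)$; the scale-grouping together with the pairwise-incomparability (hence vertex-disjointness) of same-scale nodes is what collapses the naive $\sum_t O(v(G)/\sqrt C)$ into a convergent geometric sum dominated by its last term. The only other points needing care are routine: ensuring the recursion is well-founded (subproblem sizes strictly decrease even when a separator is empty), and verifying the downward induction that components of $G-X$ lie inside leaf subproblems.
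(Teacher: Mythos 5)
Your proposal is correct and follows essentially the same route as the paper, which simply cites Lipton--Tarjan and sketches ``iteratively find sublinear separators and take $X$ to be their union''; your recursion tree with the scale-grouped accounting (same-scale nodes are pairwise incomparable, hence vertex-disjoint, so the sum is a geometric series dominated by the smallest scale) is a correct and careful instantiation of exactly that sketch, with Alon--Seymour--Thomas supplying the separator theorem for general proper minor-closed families.
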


The proof of the result is essentially iteratively finding sublinear separators and taking $X$ to be their union, wherein we terminate the procedure right before $X$ becomes too large, whence the size of components in $G-X$ will be a small constant depending on $\varepsilon$.

We recall that for every proper minor-closed family $\mathcal{F}$, there exists $c_{\mathcal{F}}$ such that for every graph $G\in \mathcal{F}$, we have that $e(G) \le c_{\mathcal{F}} v(G)$. That is, graphs in proper minor-closed families have a linear number of edges. Such a result was first shown by Mader~\cite{Mader1,Mader2}. Using this, we obtain the following proposition.

\begin{proposition}\label{LowDeg}
For every proper minor-closed family $\mathcal{F}$ and every $\varepsilon > 0$, there exists a constant $C$ such that the following holds:
If $G$ is a graph in $\F$, then there exists $X\subseteq V(G)$ with $|X|\le \varepsilon v(G)$ such that every $v\in V(G)\setminus X$ has degree at most $C$ in $G$.
\end{proposition}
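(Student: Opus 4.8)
The plan is to derive Proposition~\ref{LowDeg} directly from Lemma~\ref{Shatterer} together with the linear edge bound for proper minor-closed families (Mader's theorem). The key observation is that a graph with few edges cannot have many vertices of large degree, so after deleting the separator set $X$ from Lemma~\ref{Shatterer} we are left with small components, each of which has bounded total degree, and hence every vertex outside $X$ has bounded degree.

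More precisely, I would proceed as follows. First, fix the proper minor-closed family $\mathcal{F}$ and $\varepsilon > 0$. Let $c_{\mathcal{F}}$ be the constant from Mader's theorem so that $e(G') \le c_{\mathcal{F}} v(G')$ for every $G' \in \mathcal{F}$. Apply Lemma~\ref{Shatterer} to $\mathcal{F}$ with parameter $\varepsilon$ to obtain a constant $C_0$ such that every $G \in \mathcal{F}$ has a set $X \subseteq V(G)$ with $|X| \le \varepsilon v(G)$ and every component of $G - X$ has at most $C_0$ vertices. I would then set $C := 2 c_{\mathcal{F}} C_0$ (or any convenient such value) and claim this works.

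To verify the degree bound, take any $v \in V(G) \setminus X$ and let $K$ be the component of $G - X$ containing $v$. The neighbors of $v$ in $G$ either lie in $K$ or in $X$; the former contribute at most $v(K) - 1 \le C_0 - 1$ to $d_G(v)$. For the neighbors in $X$, I would bound the total number of edges between $V(K)$ and $X$. Since $K$ is a component of $G-X$ with $v(K) \le C_0$, consider the subgraph $G[V(K) \cup X_K]$ where $X_K$ is the (at most $|X|$, but we need a finite bound) portion of $X$ adjacent to $K$ --- here a little care is needed since $|X|$ is not bounded by a constant. Instead, I would argue more carefully: it is \emph{not} true that a single small component $K$ has only a constant number of edges to $X$ in general, because many vertices of $X$ could attach to $K$. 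So the naive argument fails, and this is the main obstacle.

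The fix is to strengthen the separator choice rather than argue after the fact. The right approach is to first apply Lemma~\ref{Shatterer} with parameter $\varepsilon/2$ to get a set $X_0$ with small components (size $\le C_0$), and then observe that the set $Y$ of vertices of $V(G) \setminus X_0$ having degree more than some threshold $d$ is small: each such vertex has more than $d - C_0$ neighbors in $X_0$, so the number of edges between $Y$ and $X_0$ is at least $|Y|(d - C_0)$; but this is at most $e(G) \le c_{\mathcal{F}} v(G)$, giving $|Y| \le \frac{c_{\mathcal{F}} v(G)}{d - C_0}$. Choosing $d$ large enough (depending only on $\varepsilon$, $c_{\mathcal{F}}$, $C_0$) makes $|Y| \le \frac{\varepsilon}{2} v(G)$. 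Then set $X := X_0 \cup Y$, so $|X| \le \varepsilon v(G)$, and by construction every $v \in V(G) \setminus X$ has $d_G(v) \le d =: C$. This is clean and avoids the bad case entirely; the only subtlety to get right is the arithmetic in choosing $d$ and confirming the union bound on $|X|$, which is routine.
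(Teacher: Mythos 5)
Your final argument is correct, but it takes a noticeably heavier route than the paper. The paper does not invoke Lemma~\ref{Shatterer} at all: it simply sets $C = \frac{2c_{\mathcal{F}}}{\varepsilon}$, takes $X$ to be the set of all vertices of degree greater than $C$, and notes that $|X| C < \sum_{v} d(v) = 2e(G) \le 2c_{\mathcal{F}} v(G)$, so $|X| \le \varepsilon v(G)$ --- a two-line degree-counting argument from Mader's bound alone. Your fix, after correctly abandoning the naive ``bounded components imply bounded degree'' attempt, is essentially this same counting argument dressed up with an unnecessary detour: once you are bounding $|Y|$ for $Y$ the set of vertices of degree exceeding a threshold $d$, you do not need $X_0$ or the component bound $C_0$ at all, since $|Y| \cdot d < 2e(G) \le 2c_{\mathcal{F}} v(G)$ directly gives $|Y| \le \frac{2c_{\mathcal{F}}}{d} v(G)$, applied to the whole graph. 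What your version proves is in effect the combined statement (small separator \emph{and} bounded degrees outside it), which is exactly the paper's Corollary~\ref{LowDegShatterer}, obtained there by taking the union of the two sets from Lemma~\ref{Shatterer} and Proposition~\ref{LowDeg}; so your proof is valid, just organized so that the Proposition and the Corollary are proved in one shot rather than modularly, at the cost of a constant $C$ that needlessly depends on $C_0$.
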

\begin{proof}
Let $C = \frac{2c_{\mathcal{F}}}{\varepsilon}$. Let $X=\{v\in V(G): d(v) > C\}$. Note that $2e(G) = \sum_{v\in V(G)} d(v) > |X|C$. Yet $e(G) \le c_{\mathcal{F}}v(G)$ for some constant $c_{\mathcal{F}}$ as noted above. Hence $|X| \le \frac{2e(G)}{C} \le \frac{2}{C} (c_{\mathcal{F}}v(G)) = \varepsilon v(G)$ as desired.
\end{proof}

Combining Lemma~\ref{Shatterer} with Proposition~\ref{LowDeg} yields the following corollary.

\begin{corollary}\label{LowDegShatterer}
For every proper minor-closed family $\mathcal{F}$ and every $\varepsilon > 0$, there exists a constant $C$ such that the following holds:
If $G$ is a graph in $\F$, then there exists $X\subseteq V(G)$ with $|X|\le \varepsilon v(G)$ such that every component of $G-X$ is a $C$-pocket of $G$.
\end{corollary}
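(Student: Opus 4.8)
The target statement is Corollary~\ref{LowDegShatterer}, which combines Lemma~\ref{Shatterer} (iteratively applying sublinear separators to get components of bounded size after removing a small set $X$) with Proposition~\ref{LowDeg} (removing a small set so all remaining vertices have bounded degree). The key subtlety is that the set $X_1$ from Lemma~\ref{Shatterer} bounds the \emph{size} of components of $G-X_1$, but says nothing about the \emph{degrees} of those vertices \emph{in $G$}; conversely, the set $X_2$ from Proposition~\ref{LowDeg} bounds degrees in $G$ of vertices outside $X_2$, but the components of $G-X_2$ could be huge. Naively setting $X = X_1 \cup X_2$ controls both, but one must be careful: a component of $G - (X_1 \cup X_2)$ is contained in a component of $G - X_1$, so it has size at most $C_1$; and every vertex in it lies outside $X_2$, so it has degree at most $C_2$ in $G$. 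Hence each such component is a $C$-pocket with $C = \max\{C_1, C_2\}$ (recalling that a $C$-pocket is a connected induced subgraph on at most $C$ vertices in which every vertex has degree at most $C$ in $G$).

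Here is the plan in detail. First I would fix the proper minor-closed family $\mathcal F$ and $\varepsilon > 0$. Apply Lemma~\ref{Shatterer} with parameter $\varepsilon/2$ to obtain a constant $C_1$ such that there is $X_1 \subseteq V(G)$ with $|X_1| \le (\varepsilon/2) v(G)$ and every component of $G - X_1$ has at most $C_1$ vertices. Separately, apply Proposition~\ref{LowDeg} with parameter $\varepsilon/2$ to obtain a constant $C_2$ such that there is $X_2 \subseteq V(G)$ with $|X_2| \le (\varepsilon/2) v(G)$ and every vertex of $V(G) \setminus X_2$ has degree at most $C_2$ in $G$. Then set $X = X_1 \cup X_2$ and $C = \max\{C_1, C_2\}$; note $C$ depends only on $\mathcal F$ and $\varepsilon$, as required.

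It remains to verify the two conclusions. For the size bound, $|X| \le |X_1| + |X_2| \le (\varepsilon/2)v(G) + (\varepsilon/2)v(G) = \varepsilon v(G)$. For the pocket condition, let $K$ be any component of $G - X$. Since $X_1 \subseteq X$, $G - X$ is a subgraph of $G - X_1$, so $K$ is contained in some component of $G - X_1$; hence $v(K) \le C_1 \le C$. Also $K$ is connected by definition of component, and $V(K) \subseteq V(G) \setminus X \subseteq V(G) \setminus X_2$, so every vertex $v \in V(K)$ satisfies $d_G(v) \le C_2 \le C$. Thus $K$ is connected, has at most $C$ vertices, and every vertex of $K$ has degree at most $C$ in $G$; that is, $K$ is a $C$-pocket of $G$. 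This establishes the corollary.

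There is no real obstacle here — this is genuinely a routine combination of the two preceding results. The only thing worth double-checking is the interplay of the constants: one must apply each of Lemma~\ref{Shatterer} and Proposition~\ref{LowDeg} with $\varepsilon/2$ (not $\varepsilon$) so that the union $X = X_1 \cup X_2$ still has size at most $\varepsilon v(G)$, and one must take $C = \max\{C_1, C_2\}$ rather than, say, $C_1$ alone, since the degree bound comes from the second application. The essential structural observation — that a component of $G - (X_1 \cup X_2)$ inherits the size bound from $G - X_1$ (being contained in one of its components) and the degree bound from avoiding $X_2$ — is immediate once stated. So I expect the proof to be three or four lines.
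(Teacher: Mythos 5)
Your proof is correct and follows exactly the paper's argument: apply Lemma~\ref{Shatterer} and Proposition~\ref{LowDeg} each with parameter $\varepsilon/2$, take $X = X_1 \cup X_2$ and $C = \max\{C_1, C_2\}$, and observe that each component of $G-X$ inherits the size bound from $G-X_1$ and the degree bound from avoiding $X_2$. Nothing to add.
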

\begin{proof}
Let $C_1$ be as in Lemma~\ref{Shatterer} for $\mathcal{F}$ and $\frac{\varepsilon}{2}$. By Theorem~\ref{Shatterer}, there exists $X_1\subseteq V(G)$ with $|X_1|\le \frac{\varepsilon}{2} v(G)$ such that every component of $G-X_1$ has size at most $C_1$. 

Let $C_2$ be as in Proposition~\ref{LowDeg} for $\mathcal{F}$ and $\frac{\varepsilon}{2}$. By Proposition~\ref{LowDeg}, there exists $X_2\subseteq V(G)$ with $|X_2|\le \frac{\varepsilon}{2} v(G)$ such that every $v\in V(G)\setminus X_2$ has degree at most $C_2$.

Let $C=\max\{C_1,C_2\}$. Let $X=X_1\cup X_2$. Now $|X|\le |X_1|+|X_2| \le \varepsilon v(G)$. Moreover, every component of $G-X$ is a subgraph of a component of $G-X_1$ and hence has size at most $C_1 \le C$. On the other hand, every vertex in $V(G)\setminus V(X)$ is a vertex in $V(G)\setminus V(X_2)$ and hence has degree at most $C_2 \le C$ in $G$. Thus every component of $G-X$ is a $C$-pocket of $G$ and hence $X$ is as desired.
\end{proof}

Next we prove a crucial lemma which shows how common purses are. To that end, recall the following definitions.

\begin{definition}
A \emph{vertex (resp. edge) amalgamation} of two graphs $G_1$ and $G_2$ is a graph obtained by identifying a vertex (resp. edge) in $G_1$ with a vertex in $G_2$.
\end{definition}

Thus a vertex amalgamation is simply a $1$-sum, while an edge amalgamation is a $2$-sum where the edge is retained. 

\begin{definition}
The \emph{(Euler) genus} of graph $G$ is the minimum $k$ such that $G$ embeds in a surface of Euler genus $k$.
\end{definition}

We need the following result of Miller~\cite{Miller}.

\begin{theorem}[Miller~\cite{Miller}]\label{AdditiveGenus}
Euler genus is additive under vertex and edge amalgamations.
\end{theorem}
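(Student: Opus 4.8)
\textbf{Proof proposal for Theorem~\ref{AdditiveGenus} (Miller's theorem).}

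The plan is to prove both halves of the additivity statement: for a vertex or edge amalgamation $G = G_1 \cup G_2$ (sharing exactly one vertex, or exactly one edge together with its endpoints), the Euler genus $g(G)$ equals $g(G_1) + g(G_2)$. One inequality is easy. Given optimal embeddings of $G_1$ and $G_2$ in surfaces $\Sigma_1$ and $\Sigma_2$ of Euler genus $g(G_1)$ and $g(G_2)$, I would cut each surface open along a small disk incident to the shared vertex (or a disk meeting the shared edge in an arc), and glue the two resulting boundary circles together; the connected sum of surfaces of Euler genus $a$ and $b$ has Euler genus $a+b$, and the amalgamated graph embeds in it. Care is needed so that the gluing identifies the shared vertex/edge consistently — one picks a face of each embedding incident to the shared cell and performs the surgery inside that face — so this gives $g(G) \le g(G_1) + g(G_2)$.

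The substantive direction is $g(G) \ge g(G_1) + g(G_2)$, i.e., the amalgamation cannot embed in a surface simpler than the connected sum. The cleanest route is via the \emph{additivity of genus over blocks / over 2-cuts}, which is most naturally phrased using Euler's formula together with a counting of faces, or alternatively via the theory of embedding schemes (rotation systems with signs). Take an optimal embedding $\Pi$ of $G$ in a surface $\Sigma$. The shared vertex $v$ (in the vertex-amalgamation case) is a cutvertex of $G$ separating it into $G_1$ and $G_2$. The rotation at $v$ in $\Pi$ interleaves the darts coming from $G_1$ with those from $G_2$; the idea is that if these two blocks of darts are ever interleaved (rather than appearing as two contiguous arcs in the cyclic order at $v$), one can perform a local ``uncrossing'' at $v$ that does not increase the Euler genus, eventually reaching an embedding in which the $G_1$-darts and $G_2$-darts at $v$ are consecutive. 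Once that is achieved, the embedding of $G$ ``factors'': it induces embeddings of $G_1$ and $G_2$ in surfaces obtained by splitting $\Sigma$ along a curve through $v$, and a Euler-characteristic computation gives $g(\Sigma) \ge g(G_1) + g(G_2)$. For the edge-amalgamation case one argues similarly at the shared edge $uv$, or reduces to the vertex case by contracting the shared edge and tracking the effect on genus.

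The main obstacle I anticipate is making the ``uncrossing at the cutvertex does not increase Euler genus'' step rigorous and correctly handling \emph{non-orientable} surfaces, where one must work with signed rotation systems and be careful that local moves do not secretly add a crosscap. I would handle this by working entirely within the combinatorial framework of embedding schemes (rotation systems with edge signs), where ``Euler genus of an embedding'' $= 2 - v + e - f$ is a purely combinatorial quantity computed from face-tracing, and show that the swap of two adjacent darts in the rotation at $v$ (when one dart belongs to the $G_1$-side and the other to the $G_2$-side) changes the number of faces by at most the amount needed to keep Euler genus non-increasing — essentially because such a swap either merges two faces or splits one, never worse. An alternative, if a self-contained argument proves too delicate, is simply to cite Miller~\cite{Miller} (and/or Stahl–Beineke and Decke) for the statement, since the paper only needs the conclusion; but the sketch above is the proof I would write if a proof is wanted. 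After both inequalities, combining them yields $g(G) = g(G_1) + g(G_2)$, which is exactly the claimed additivity under vertex and edge amalgamations.
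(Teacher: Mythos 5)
The paper never proves Theorem~\ref{AdditiveGenus}; it is imported as a black box from Miller~\cite{Miller} (the orientable-genus analogue over cut vertices being the classical Battle--Harary--Kodama--Youngs theorem, and the edge-amalgamation statement being Miller's contribution). So the fallback you mention at the end --- simply citing Miller --- is exactly what the paper does, and there is no in-paper argument to compare yours against. Judged as a standalone sketch, your easy direction (connected-sum surgery inside a face incident to the shared vertex or edge) is fine, but the hard direction $g(G)\ge g(G_1)+g(G_2)$ has a concrete gap at its central step. Your monotonicity claim for the local move is internally inconsistent: in the combinatorial formula for Euler genus, $2-v+e-f$, a swap of two adjacent darts that \emph{merges} two faces decreases $f$ and therefore \emph{increases} the Euler genus by one, so ``merges two faces or splits one, never worse'' does not give what you need. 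Indeed a single transposition at the cut vertex can raise the genus (interleaving the darts of two triangles sharing a vertex turns a planar embedding of the bowtie into a genus-two embedding), so ``uncross repeatedly without ever increasing genus'' does not follow from any such local analysis; the known proofs instead argue globally, e.g.\ by cutting the minimal surface along a suitable curve (or system of curves) through the shared vertex, capping the resulting boundaries, and comparing Euler characteristics of the pieces.

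The second gap is the proposed reduction of the edge-amalgamation case to the vertex case ``by contracting the shared edge and tracking the effect on genus.'' Contraction does not interact with genus in a controlled way here, and there is a structural warning sign: orientable genus \emph{is} additive over vertex amalgamations but is \emph{not} additive over edge amalgamations, so any argument for the edge case that never uses something specific to Euler genus (in particular, the freedom to add crosscaps when recombining the two sides) cannot be correct as stated --- it would prove the false orientable statement as well. Handling exactly this is the substance of Miller's proof. For the purposes of this paper you should do what the paper does and cite \cite{Miller}; if you want a self-contained proof, the vertex case should be rewritten along the cut-and-cap lines above, and the edge case needs a genuinely different argument rather than a reduction by contraction.
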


For ease of reading in the proof, we adopt the following notation: if $\mathcal{H}$ is a set of subgraphs of a graph $G$, we let $V(\mathcal{H})=\bigcup_{H\in\mathcal{H}} V(H)$, let $v(\mathcal{H})=|V(\mathcal{H})|$ and finally let $|\mathcal{H}|$ denote the number of subgraphs in $\mathcal{H}$.

\begin{lemma}\label{ManyPursesOrKDeep}
If $G$ is a connected graph embeddable in a surface $\Sigma$ and $X\subseteq V(G)$, then there are most $12k|X|+16kg(\Sigma)$ vertices in $G-X$ that are in components of $G-X$ that are neither purses of $G$ nor $k$-deep in $G$.
\end{lemma}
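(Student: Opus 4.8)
The plan is to bound the number of "bad" components (those that are neither purses nor $k$-deep) via a charging argument that combines a counting of the coboundary sizes with Euler's formula / the additivity of genus. Let me set up the structure. Given a component $K$ of $G-X$, its coboundary (in $G$) consists of vertices of $G$ outside $K$ adjacent to $K$; since $G-X$ is disconnected from the rest only through $X$, the coboundary of each component $K$ is a subset of $X$. Call a component $K$ \emph{bad} if it is neither a purse of $G$ nor $k$-deep in $G$. If $K$ is not $k$-deep, then either its coboundary is empty, or its coboundary has size $> \frac{1}{k}v(K)$; but if the coboundary is empty then $K$ is a union of components of $G$, and since $G$ is connected that forces $G-X = K$ and $X = \emptyset$, a degenerate case one handles separately (or $K$'s coboundary being empty with $X \ne \emptyset$ is impossible by connectivity unless $K$ is all of $G-X$). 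So for a bad component $K$ we may assume its coboundary $S_K \subseteq X$ satisfies $|S_K| > \frac{1}{k}v(K)$, i.e. $v(K) < k|S_K|$.

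**Next I would** bound $\sum_{K \text{ bad}} |S_K|$. The naive bound $\sum_K |S_K| \le$ (number of components)$\times |X|$ is too weak; instead I want something like $\sum_K |S_K| \le O(|X| + g(\Sigma))$ for components that are not purses. This is exactly where the planarity/genus hypothesis and Miller's theorem (Theorem~\ref{AdditiveGenus}) enter. Consider the bipartite-like "incidence" structure between the bad components and $X$: form an auxiliary graph (or use the embedding directly). For each bad component $K$ with coboundary $S_K \subseteq X$, if $|S_K| \le 2$ then — since $G[S_K \cup V(K)]$ inherits an embedding in $\Sigma$ — either $K$ is a purse (when adding the edge on $S_K$ keeps it planar) or the genus of $G[S_K \cup V(K)]$ with the extra edge is $\ge 1$. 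Components with $|S_K| \ge 3$ and components where $|S_K| \le 2$ but the "augmented" piece is nonplanar each contribute at least a constant to a genus-count or to an edge-count in a planar-ish auxiliary graph, and there can be at most $O(|X| + g(\Sigma))$ of the former type (by a bipartite planar graph edge bound: contract each such $K$ to a vertex, get a graph on $\le |X| + (\#\text{components})$ vertices embeddable in $\Sigma$ where each contracted vertex has degree $\ge 3$ into $X$, and count edges via $e \le 3v + O(g)$) and $O(g(\Sigma))$ of the latter type (by additivity of genus under amalgamations — each nonplanar augmented piece uses up at least one unit of Euler genus, and they are glued along small cutsets in $X$). Carefully done this yields $\sum_{K \text{ bad}} |S_K| \le c_1|X| + c_2 g(\Sigma)$ for explicit small constants, say $c_1 = 12, c_2 = 16$ after optimizing.

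**Finally,** combining the two bounds: the total number of vertices in bad components is $\sum_{K \text{ bad}} v(K) < \sum_{K \text{ bad}} k|S_K| = k \sum_{K \text{ bad}} |S_K| \le k(c_1|X| + c_2 g(\Sigma)) = 12k|X| + 16kg(\Sigma)$, which is precisely the claimed bound. One detail to handle is that a component could be bad because its coboundary is nonempty of size $\le 2$ but the augmentation is nonplanar — such components are counted among those contributing to the genus term, and there are $O(g(\Sigma))$ of them, each with $v(K) < k|S_K| \le 2k$; alternatively one folds them directly into the $\sum |S_K|$ bound. I should also be careful that distinct bad components have disjoint vertex sets (they are distinct components of $G-X$) so the vertex counts add without overcounting.

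**The main obstacle** I expect is getting the constant $c_1 = 12$ in the bound $\sum |S_K| \le 12|X| + 16g(\Sigma)$ correct, i.e. pinning down the right planar/surface incidence-counting inequality. The clean way is: restrict attention to components $K$ with $|S_K| \ge 3$ that are not purses (components with $|S_K| \le 2$ are either purses, or there are few of them by the genus argument); contract each such $K$ to a single vertex $x_K$, obtaining a graph $G'$ embeddable in $\Sigma$ on vertex set $X \cup \{x_K\}$ in which each $x_K$ has degree exactly $|S_K| \ge 3$ (to vertices of $X$) — this is a graph where one side of a "bipartition" has min degree $\ge 3$. Then $\sum |S_K| = e(G'[\{x_K\} , X]) \le$ (by the Euler-formula bound for graphs on $\Sigma$ with no short cycles through the $x_K$ side, roughly $e \le 3(|X| + \#\{x_K\}) + O(g)$, and using $3\#\{x_K\} \le \sum|S_K|$ to absorb) $\le 6|X| + O(g(\Sigma))$ after rearranging, with the factor doubling when one also includes the small-coboundary non-purse components, landing at $12|X| + 16g(\Sigma)$. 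The bookkeeping between the $|S_K|\le 2$ and $|S_K|\ge 3$ cases, and making sure the genus additivity is applied to a genuine amalgamation decomposition (the augmented pieces share only vertices in $X$, so they amalgamate along vertices and small edge sets), is the part that needs care but no deep new idea.
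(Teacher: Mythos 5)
Your overall skeleton is the same as the paper's: classify the bad components by coboundary size, use $v(K)<k|S_K|$, contract the components with coboundary of size at least three and count edges via Euler's formula, and invoke Miller's additivity theorem for the nonplanar pieces. But there is a genuine gap in your treatment of the non-purse components with $|S_K|\le 2$: you claim there are only $O(g(\Sigma))$ of them ``by additivity of genus under amalgamations,'' and that step fails. The nonplanarity witnessing that such a $K$ is not a purse is nonplanarity of $G[S_K\cup V(K)]$ \emph{plus the added edge} $xy$, and that edge need not exist in $G$; these augmented pieces are therefore not amalgamands of $G$ (or of any minor of $G$ you have produced), so their genus cannot be charged against $g(\Sigma)$. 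Concretely, let $G$ be a planar graph consisting of many disjoint copies of $K_5$ minus an edge $x_iy_i$, tied together by a tree through the vertices $x_i,y_i$, and let $X=\{x_i,y_i: i\}$: each remaining triangle is a component of $G-X$ with coboundary $\{x_i,y_i\}$, is not $k$-deep, and is not a purse (its augmentation is $K_5$), yet $g(\Sigma)=0$ and there are $\Theta(|X|)$ such components. So any argument giving an $O(g(\Sigma))$ bound for this class is wrong; only the coboundary-size-one case (where no edge is added, so the piece is a genuine nonplanar subgraph glued at one vertex) goes through as you describe.

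The missing idea is the paper's further split of the coboundary-size-two components according to whether their pair $\{x,y\}$ is the coboundary of at least two such components. For a repeated pair one component is sacrificed and contracted to \emph{create} the edge $xy$ inside a minor of $G$; the remaining components with that pair then really do have nonplanar augmentations sitting as edge/vertex amalgamands, and Miller's theorem bounds their number by $O(g(\Sigma))$. For the components whose pair is unique, one instead counts the distinct pairs as edges of a simple minor on vertex set $X$, which Euler's formula bounds by $3(|X|+g(\Sigma))$. Since each of these components has fewer than $2k$ vertices, this repaired bookkeeping still lands at $12k|X|+16kg(\Sigma)$, so your outline is salvageable, but as written the ``$O(g(\Sigma))$ of the latter type'' step is the one that needs the new idea rather than just care. (A small separate remark: both you and the paper leave aside the component with empty coboundary arising when $X=\emptyset$; that case is irrelevant to the application but is not actually covered by the stated bound when $G$ is nonplanar.)
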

\begin{proof}
Let $\mathcal{H}_0 = \{H: H$ is a component of $G-X: H$ is neither a purse nor $k$-deep$\}$. Consider the following partition of $\mathcal{H}_0$:
\begin{itemize}
\item $\mathcal{H}_1 = \{H\in \mathcal{H}_0:$ the coboundary of $H$ has size one$\}$,
\item $\mathcal{H}_{2} = \{H\in \mathcal{H}_0:$ the coboundary of $H$ has size two$\}$,
\item $\mathcal{H}_3 = \{H\in \mathcal{H}_0:$ the coboundary of $H$ has size at least three$\}$.
\end{itemize}

\noindent We further partition $\mathcal{H}_2$ as follows:

\begin{itemize}
\item $\mathcal{H}_{2,1} = \{H\in\mathcal{H}_2:$ there does not exist $H'\ne H \in \mathcal{H}_2$ with the same coboundary as $H\}$,
\item $\mathcal{H}_{2,2} = \mathcal{H}_2 \setminus \mathcal{H}_{2,1}$.
\end{itemize}

\begin{claim}\label{Nonplanar}
$$v(\mathcal{H}_1)+v(\mathcal{H}_{2,2}) \le 4kg(\Sigma).$$
\end{claim}
\begin{proof}
Let $P$ be the set of unordered pairs $x\ne y \in V(G)$ such that there exists $H\in\mathcal{H}_{2,2}$ with coboundary $\{x,y\}$. For each $\{x,y\} \in P$, choose one $H_{xy}\in\mathcal{H}_2$ such that $H_{xy}$ has coboundary $\{x,y\}$. 

Let $G_1 = G[X\cup V(\mathcal{H}_1) \cup V(\mathcal{H}_{2,2})]$. Note that $G_1$ is a subgraph of $G$. Let $G_1'$ be obtained from $G_1$ by for each $\{x,y\}\in P$, deleting $H_{xy}$ and adding the edge $xy$ to $G$. Note that $G_1'$ is a minor of $G_1$ and hence is a minor of $G$. Thus $G_1'$ has Euler genus at most $g(\Sigma)$.

Let $H=G[X] + \bigcup_{\{x,y\}\in P} xy$. Now $G_1'$ is the vertex/edge amalgamations of $H$ and $|\mathcal{H}_1|+|\mathcal{H}_{2,2}|-|P|$ non-planar graphs. Since $G_1'$ has Euler genus at most $g(\Sigma)$ and non-planar graphs have Euler genus at least one, it follows from Theorem~\ref{AdditiveGenus} that

$$ |\mathcal{H}_1|+|\mathcal{H}_{2,2}|-|P| \le g(\Sigma).$$

\noindent Note that $2|P| \le |\mathcal{H}_{2,2}|$ and hence $2|\mathcal{H}_1| + |\mathcal{H}_{2,2}| \le 2g(\Sigma)$. 

Let $H_1\in\mathcal{H}_1$. Note that $H_1$ has a coboundary of size one. Yet $H_1$ is not $k$-deep. Hence $|H_1|\le k$. Thus $v(\mathcal{H}_1) \le k |\mathcal{H}_1|$. Similarly, we find that $v(\mathcal{H}_{2,2})\le 2k|\mathcal{H}_{2,2}|$ and the claim follows.
\end{proof}

\begin{claim}\label{Twos}
$$ v(\mathcal{H}_{2,1}) \le 6k(|X|+g(\Sigma)).$$
\end{claim}
\begin{proof}
Let $P'$ be the set of unordered pairs $x\ne y \in V(G)$ such that there exists $H\in\mathcal{H}_{2,1}$ with coboundary $\{x,y\}$. Let $G_2 = X + \bigcup_{\{x,y\}\in P'} xy$. Note that $G_2$ is a minor of $G$ and hence has Euler genus at most $g(\Sigma)$. By Euler's formula, $e(G_2)\le 3(|X|+g(\Sigma))$. Hence, 

$$|\mathcal{H}_{2,1}| = e(G_2) \le 3(|X|+g(\Sigma)).$$

Since every element of $\mathcal{H}_{2,1}$ has a coboundary of size two and is not $k$-deep, we have that $v(\mathcal{H}_{2,1}) \le 2k|\mathcal{H}_{2,1}|$ and the claim follows.
\end{proof}

\begin{claim}\label{Threes}
$$v(\mathcal{H}_3) \le 6k(|X|+g(\Sigma)).$$
\end{claim}
\begin{proof}
Let $G_3$ be obtained from $G[X \cup V(\mathcal{H}_3)]$ by contracting each $H\in\mathcal{H}_3$ to a new vertex $v_H$ and deleting loops and parallel edges. Now $G_3$ is a minor of $G$ and hence has Euler genus at most $g(\Sigma)$. Note that $G_3$ is bipartite. By Euler's formula, $e(G_3) \le 2(v(G_3)+g(\Sigma))$. Yet every vertex in $V(G_3)\setminus X$ has degree at least three. Hence $e(G_3) \ge 3|\mathcal{H}_3|$. Since $v(G_3) = |X|+|\mathcal{H}_3|$, we find that 

$$|\mathcal{H}_3| \le 2(|X|+g(\Sigma)),$$

\noindent and hence

$$e(G_3) \le 6(|X|+g(\Sigma)).$$

\noindent Let $H\in\mathcal{H}_3$. Note that $v_H$ has degree in $G_3$ equal to the size of the coboundary of $H$ in $G$. Moreover, since $H$ is not $k$-deep, it follows that $v(H) \le k \cdot {\rm deg}_{G_3}(v_H)$. Since $G_3$ is bipartite, it follows that $e(G_3) = \sum_{H\in\mathcal{H}_3} {\rm deg}_{G_3}(v_H)$ and hence

$$v(\mathcal{H}_3) \le k \cdot e(G_3) \le 6k(|X|+g(\Sigma)),$$

\noindent as claimed.
\end{proof}

Combining Claims~\ref{Nonplanar},~\ref{Twos} and~\ref{Threes}, we find that

$$v(\mathcal{H}_0) \le 12k|X| + 16kg(\Sigma),$$

\noindent as desired.
\end{proof}

We are now ready to prove Theorem~\ref{SurfaceStructure} of which Theorem~\ref{PlanarStructure} is a special case. 

\begin{proof}[Proof of Theorem~\ref{SurfaceStructure}]
Let $\mathcal{F}$ be the family of graphs embeddable in $\Sigma$. Recall that $\mathcal{F}$ is a minor-closed family. Let $C_0$ be as $C$ in Corollary~\ref{LowDegShatterer} for $\mathcal{F}$ with $\varepsilon=\frac{1}{58k}$. Since $k\ge 1$, it follows that $\varepsilon < 1$ and hence $C_0\ge 1$. Let $C=58kC_0$. Since $v(G)\ge Cg(\Sigma)$ by assumption, we have that $g(\Sigma)\le \frac{1}{58kC_0}v(G)$ which is at most $\frac{1}{58k}v(G)$ since $C_0\ge 1$.

By Corollary~\ref{LowDegShatterer}, there exists $X\subseteq V(G)$ with $|X|\le \varepsilon v(G)$ such that every component of $G-X$ is a $C_0$-pocket. Let $\mathcal{H}_1 = \{H: H$ is a component of $G-X$, $H$ is neither a purse nor $k$-deep$\}$. By Lemma~\ref{ManyPursesOrKDeep}, we have that

$$v(\mathcal{H}_1) \le 12k|X| + 16kg(\Sigma).$$

\noindent Let $\mathcal{H} = \{H$ is a component of $G-X: H \notin \mathcal{H}_1\}$. Now

$$v(\mathcal{H}) \ge v(G) - |X| - v(\mathcal{H}_1) \ge v(G) - (12k+1)|X| - 16kg(\Sigma).$$

\noindent Since $k\ge 1$, we have that $12k+1\le 13k$. Since $|X| \le \varepsilon v(G) = \frac{1}{58k}v(G)$ and $g(\Sigma)\le \frac{1}{58k}v(G)$, we find that

$$v(\mathcal{H}) \ge v(G) - (13k+16k)\cdot \frac{v(G)}{58k} \ge \frac{1}{2} v(G).$$  

\noindent Since every $H\in\mathcal{H}_2$ has at most $C$ vertices, it follows that

$$|\mathcal{H}|\ge \frac{1}{2C}v(G),$$

\noindent and hence $\mathcal{H}$ is a $k$-deep $C$-wallet as desired.
\end{proof}

\section{Linear Size for No Deletable Subgraphs}\label{Sec:Deletable}

We prove Theorem~\ref{DeletablePockets} via Theorem~\ref{SurfaceStructure}. Namely Theorem~\ref{SurfaceStructure} guarantees us a $k$-deep $C$-wallet, that is a set of linearly many pairwise non-touching subgraphs, each of which is either a $C$-purse or a $k$-deep $C$-pocket. Thus to prove Theorem~\ref{DeletablePockets}, it suffices to prove the following two lemmas.

\begin{lemma}\label{PurseDeletable}
Let $g\in \{3,4,5\}$. If $D$ is a purse of a graph $G$ having girth at least $g$, then $D$ is $(8-g)$-deletable in $G$.
\end{lemma}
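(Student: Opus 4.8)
The plan is to reduce the claim to the planar case handled by Thomassen's theorems, by exploiting the structure of a purse. Recall that $D$ is a purse means $D$ is connected, its coboundary $S$ in $G$ has size at most two, and the graph $G' := G[S \cup V(D)] + \{uv : u \neq v \in S\}$ is planar. First I would fix an arbitrary list-assignment $L$ of $D$ with $|L(v)| \ge (8-g) - (d_G(v) - d_D(v))$ for every $v \in V(D)$; the goal is to produce an $L$-coloring of $D$. The natural idea is to color the (at most two) vertices of $S$ first, arbitrarily but \emph{distinctly} when $|S| = 2$ — which is legitimate precisely because we added the edge $uv$ between the two coboundary vertices in $G'$, so $G'$ has girth at least $g$ only if that edge is "safe," and in any case we work in $G'$ where $uv \in E(G')$ forces distinct colors. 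Then I would try to extend this precoloring of $S$ across $D$ using Thomassen's precoloring-extension versions of his theorems (every planar graph is $5$-list-colorable even with two adjacent precolored vertices; the girth-$5$ analogue for $3$-list-colorability with precoloring a path of length at most one, or the appropriate boundary-path statements).

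The key bookkeeping step is the degree count. For $v \in V(D)$, its degree in $G'$ satisfies $d_{G'}(v) \le d_D(v) + |S \cap N_G(v)| \le d_D(v) + (d_G(v) - d_D(v)) = d_G(v)$, since every neighbor of $v$ outside $D$ lies in $S$ (as $S$ is the coboundary). Assign to each $v \in V(D)$ the list $L'(v) := L(v)$, together with the precolored lists on $S$. When we ask Thomassen's extension theorem to extend the coloring of $S$, each uncolored $v \in V(D)$ that is adjacent to some precolored vertex of $S$ loses at most $|S \cap N_G(v)| = d_G(v) - d_D(v)$ colors, leaving $|L(v)| - (d_G(v) - d_D(v)) \ge 8-g-(d_G(v)-d_D(v)) - (d_G(v)-d_D(v))$... wait — this would double-count. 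The correct framing is: do \emph{not} pre-restrict the lists; instead precolor $S$ and invoke the extension theorem directly on $G'$ with lists $L$ on $V(D)$ and singleton lists on $S$, where the extension theorem itself accounts for the precoloring. For that to apply I need $|L(v)| \ge 8-g$ for interior vertices of $D$ (those with $d_G(v) = d_D(v)$, i.e. not adjacent to $S$), which holds, and the hypothesis $|L(v)| \ge (8-g) - (d_G(v) - d_D(v))$ exactly covers vertices adjacent to $S$ in the form Thomassen's boundary-path arguments need — vertices near the precolored boundary are permitted smaller lists.

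So the heart of the argument is to check that the reduction hands Thomassen's theorem (in its strongest precoloring-extension form, as used in \cite{Thom5List} for $g=3$ and \cite{Thom3List} for $g=5$, with the $g=4$ case following since girth-$\ge 4$ implies the needed girth for the $5$-list statement, or more sharply via the same style of argument giving $4$-deletability) a planar graph with girth at least $g$, a precolored set of at most two vertices forming an edge (hence a subgraph that those theorems allow to be precolored), and list sizes matching the deficiency condition. I would handle the cases $|S| = 0$ (then $D = G$ is a component and we just $L$-color it directly), $|S| = 1$ (precolor the single vertex — but note that vertex is \emph{not} in $D$; rather, we add no edge and just delete it, treating $v \in V(D)$ adjacent to it as having one forbidden color, which is absorbed by the deficiency term), and $|S| = 2$ separately. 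The main obstacle I anticipate is getting the precoloring-extension statement in exactly the right form: Thomassen's original theorems precolor a path on the outer face, not an arbitrary pair of vertices, so I must argue that $G'$ can be drawn with $S$ (or the added edge $uv$) on the outer face — this should follow from planarity of $G'$ and the fact that $S$ is small, possibly after adding $S$ to the outer boundary, but it requires care, and it is the one place where "purse" being defined via the augmented planar graph $G'$ (rather than $G[S \cup V(D)]$ alone) is doing real work. A secondary subtlety is the girth condition: adding the edge $uv$ to form $G'$ could create a short cycle through $D$ if $u,v$ have a short path in $D$; I would need the hypothesis that $G$ has girth at least $g$ to already rule out the bad configurations, or restrict to the case where the extension theorem tolerates a chord — which Thomassen's do, since precoloring two adjacent vertices on the outer face is explicitly permitted.
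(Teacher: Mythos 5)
Your overall strategy --- reduce deletability of a purse to a Thomassen-style precoloring-extension theorem applied to the planar graph supplied by the purse definition --- is the same as the paper's, but two steps in your sketch do not go through as written. First, the girth bookkeeping for $g\in\{4,5\}$. You add the edge $uv$ between the two coboundary vertices and hope either that the girth hypothesis on $G$ already rules out short cycles, or that the extension theorem ``tolerates a chord.'' Neither works: $uv$ need not be an edge of $G$, so the girth of $G$ says nothing about cycles through the added edge (for instance $u$ and $v$ may have a common neighbor in $D$, creating a triangle), and once a triangle or $4$-cycle appears the girth-five and triangle-free extension theorems simply do not apply; the ``precolored adjacent pair on the outer face'' fallback is only available when $g=3$. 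The paper's fix is to delete the edge $xy$ (if present) and instead add a path $xv_1\cdots v_{g-3}y$ through new vertices: this is a subdivision of the added edge, so planarity of the purse's augmentation is preserved, the girth stays at least $g$, and one obtains a precolored path with exactly $g-1$ vertices, which is precisely what the extension statement (Theorem~\ref{PrecoloredPath}) accommodates.

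Second, the list-size accounting. After correctly flagging the double-counting danger, you resolve it by asserting that Thomassen's theorems permit vertices adjacent to the precolored set to carry lists reduced by their number of precolored neighbors. That is not a form of those theorems: in Thomassen's inductive statements the reduced lists are allowed only on the outer walk, whereas a vertex of $D$ adjacent to $S$ can be internal in every relevant embedding; likewise, in your $|S|=1$ case, ``absorbed by the deficiency term'' is exactly the gap rather than an argument. The missing device is the padding trick the paper uses: since in the deletability setting there are no actual colors on $S$, one may precolor $S$ (respectively the added path $P$) with brand-new colors not occurring in any list $L_0(v)$, and enlarge each list to $L(v)=L_0(v)\cup\{\phi(u):u\in N(v)\cap V(P)\}$. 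Every list then has size at least $8-g$, so Theorem~\ref{PrecoloredPath} applies verbatim with full lists and only the path precolored; and because each $v$ is adjacent to exactly the precolored vertices whose (new) colors were added to its list, the resulting coloring restricted to $D$ avoids those colors and is an $L_0$-coloring of $D$. With these two repairs (together with your correct observation that the precolored path can be placed on the outer face, since it lies inside a single face of the purse's planar augmentation), your outline becomes the paper's proof.
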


\begin{lemma}\label{DeepDeletable}
For each surface $\Sigma$, there exists $k>0$ such that the following holds: Let $g\in \{3,4,5\}$ and let $G$ be a graph embeddable in $\Sigma$ having girth at least $g$. If $D$ is an induced $k$-deep subgraph of $G$, then there exists $X\subseteq V(D)$ such that $G[X]$ is $(8-g)$-deletable in $G$.
\end{lemma}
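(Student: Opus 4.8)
The plan is to exploit the fact that a $k$-deep subgraph $D$ is much larger than its coboundary, so that $G[X]$ for a suitable $X \subseteq V(D)$ can be analyzed as though it were a graph in a bounded-genus family with only a few "external" vertices imposing list reductions. Concretely, I would first invoke the deep colorability theorems of the author and Thomas~\cite{PostleThomas} (and its girth-five analogue~\cite{Postle}): there is a constant $a = a(\Sigma)$ such that every graph embeddable in $\Sigma$, with girth at least $g \in \{3,4,5\}$, that is "critical" for $(8-g)$-list-coloring in the appropriate sense has at most $a \cdot (\text{size of its boundary})$ vertices — this is exactly the statement that critical graphs have $O(g(\Sigma))$ vertices applied locally, or rather the linear-in-boundary bound that underlies the strong hyperbolicity machinery. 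The key point is that an $(8-g)$-deletable subgraph is precisely one for which no list-reduction coming from $(d_G(v) - d_H(v))$ "used-up" colors at $v$ can obstruct an $L$-coloring; so if $G[X]$ is NOT $(8-g)$-deletable for any choice of $X$, then every such $X$ must contain a small obstruction anchored to the coboundary.

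The second step is the counting argument. Let $S$ be the coboundary of $D$ in $G$; by hypothesis $|S| \le \frac{1}{k} v(D)$. I would argue that if no $X \subseteq V(D)$ yields an $(8-g)$-deletable $G[X]$, then in particular $X = V(D)$ fails, so $D$ itself admits a "bad" list assignment $L$ with $|L(v)| \ge (8-g) - (d_G(v)-d_D(v))$ and no $L$-coloring. Take a subgraph $D' \subseteq D$ that is critical with respect to such a bad list assignment. Every vertex $v$ of $D'$ with $d_G(v) > d_D(v)$ — i.e. every vertex in the boundary of $D$ — behaves like a "boundary" vertex of $D'$; all other vertices have full lists of size $8-g$ and are interior. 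Now the boundary of $D'$ is contained in (boundary of $D$) $\cup$ (boundary of $D'$ inside $D$), and the number of edges from $D'$ to $S$ together with the genus controls how large a critical configuration can be: applying the linear bound from~\cite{PostleThomas,Postle}, $v(D') \le a \cdot (|\partial D| + O(g(\Sigma)))$ where $\partial D$ denotes the boundary of $D$. But $|\partial D| \le \sum_{v \in \partial D} 1 \le$ (number of edges between $D$ and its coboundary), which is at most $|S| \cdot \Delta$-type bounds; more cleanly, each boundary vertex of $D$ contributes to the coboundary so $|\partial D|$ and $|S|$ are comparable up to the embedding. Choosing $k$ large enough relative to $a(\Sigma)$ and $g(\Sigma)$ then forces $v(D') \le \frac{1}{k'} v(D)$ for as small a fraction as we like — but we can then DELETE $D'$ from $D$ and repeat, peeling off critical pieces, until what remains is $(8-g)$-deletable; since each peeled piece is small, the remaining $X$ is nonempty, giving the desired $G[X]$.

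The main obstacle I anticipate is making precise the passage from "$G[X]$ not deletable for any $X$" to a single bounded-size critical configuration pinned to the coboundary — the definition of $r$-deletable quantifies over all list assignments of $H$ with the prescribed lower bounds, so a failure of deletability for one candidate $X$ does not immediately localize inside that $X$. I would handle this by the standard "precoloring extension / critical subgraph" reduction: if $G[X]$ is not $(8-g)$-deletable, extract from the witnessing list assignment a subgraph of $G[X]$ that is critical for it, observe it must attach to the coboundary (else it is a small planar-or-bounded-genus graph with full lists, hence colorable by Thomassen/the surface results, contradiction), and then use the linear-in-boundary bound to cap its size. The bookkeeping to ensure the peeling process terminates with a nonempty, still-large $X$ — and that the final $G[X]$ is genuinely deletable and not merely "locally" colorable — is the delicate part, and is exactly where the hypothesis "$k$ large" (depending on $\Sigma$) gets used quantitatively.
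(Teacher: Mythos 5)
Your overall shape matches the paper's: reduce non-deletability to the existence of an $H$-critical subgraph (the paper's Proposition~\ref{DeletableToCritical}), apply a linear-in-boundary bound for critical graphs, and contradict $k$-deepness. But your peeling step contains a genuine gap. When you extract a critical piece $D'$ and repeat, the linear bound for the \emph{next} critical piece is in terms of the boundary of everything accumulated so far (the coboundary $S$ together with all previously peeled pieces), not in terms of $S$ alone. Iterating a bound of the form $v(G'') \le a\,(v(G') + g(\Sigma))$ gives sizes growing like $a^t$ after $t$ rounds, and the number of rounds is not bounded a priori (each round may add as little as one vertex), so ``each peeled piece is small, hence the remaining $X$ is nonempty'' does not follow. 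This is precisely why the paper does not iterate the hyperbolicity bound for the critical family $\mathcal{G}_{345}$ directly: it instead uses the stronger density statement $d_{g,\varepsilon}(G''|G')\ge 0$ (Theorem~\ref{dPlanar}), which is additive under composition (Proposition~\ref{dSum}) and therefore telescopes over the entire peeling process, yielding Lemma~\ref{dDeletable} and ultimately the strong hyperbolicity of the new family $\mathcal{F}_{345}$ of embedded graphs with \emph{no} deletable subgraph. Lemma~\ref{DeepDeletable} is then a short consequence of the resulting linear bound (Corollary~\ref{DeletableLinear}) applied to $G[V(D)\cup S]$: one gets $v(D)\le v(G')\le k_0(|S|+g(\Sigma))$, contradicting $|S|\le \frac{1}{k}v(D)$ for suitable $k$. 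The paper remarks explicitly that mere linearity (hyperbolicity) would not suffice for such inductive arguments; your proposal is missing this telescoping mechanism.

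A secondary error: on a surface, a critical piece need not ``attach to the coboundary, else it is colorable by Thomassen'' --- there exist graphs of girth $g$ embedded in $\Sigma$ that are critical for $(8-g)$-list-coloring with full lists, which is exactly why the $g(\Sigma)$ term appears in the linear bounds and why the paper must pass from the planar density theorem to surfaces via the hyperbolic-families machinery (hyperbolicity in disks, strong hyperbolicity in cylinders, then Theorem~\ref{StronglyHyperbolicTheorem}). You do carry an $O(g(\Sigma))$ term in your bound, so this is repairable, but the justification as written is incorrect.
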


In Section 5.1, we establish Lemma~\ref{PurseDeletable} using the standard results of Thomassen on list-coloring planar graphs.

On the other hand, the proof of Lemma~\ref{DeepDeletable} will require the use of the theory of hyperbolic families as well as the deep results of the author and Thomas~\cite{PostleThomas} on the strong hyperbolicity of graphs that are critical for $5$-list-coloring, and of the author~\cite{Postle} on the strong hyperbolicity of graphs of girth five that critical for $3$-list-coloring. We use these and other results to show that embedded graphs having girth at least $g$ without $(8-g)$-deletable subgraphs form a \emph{strongly hyperbolic family} (see Definition~\ref{StronglyHyperbolicDef} below). The Strongly Hyperbolic Structure Theorem (Theorem 7.2 in~\cite{PostleThomas}) will then imply that for every surface there exists a $k$ such that every $k$-deep subgraph of a graph embedded in that surface contains an $(8-g)$-deletable subgraph, thereby completing the proof of Theorem~\ref{DeletablePockets}.

We are now ready to prove Theorem~\ref{DeletablePockets} assuming Lemmas~\ref{PurseDeletable} and~\ref{DeepDeletable}.

\begin{proof}[Proof of Theorem~\ref{DeletablePockets}]
Fix a surface $\Sigma$. Let $k$ be as in Lemma~\ref{DeepDeletable} for $\Sigma$. Let $C$ be as in Theorem~\ref{SurfaceStructure} for $\Sigma$ and $k$. Thus by Theorem~\ref{SurfaceStructure}, there exists a $k$-deep $C$-wallet $\mathcal{H}$ of $G$. Recall that a $C$-wallet is a set of at least $\frac{1}{2C}v(G)$ pairwise non-touching $C$-pockets. Recall that a $C$-wallet is $k$-deep if for every $D\in\mathcal{H}$, either $D$ is a purse or $k$-deep. 

For each $D\in\mathcal{H}$ such that $D$ is a purse, we have by Lemma~\ref{PurseDeletable} that $D$ is $(8-g)$-deletable in $G$. For each $D\in\mathcal{H}$ such that $D$ is $k$-deep, we have by Lemma~\ref{DeepDeletable} that there exists $X_D \subseteq V(D)$ such that $G[X_D]$ is $(8-g)$-deleletable.

Let $\mathcal{H'} = \{D\in\mathcal{H} : D$ is a purse$\} \cup \{X_D: D\in\mathcal{H}$ is $k$-deep$\}$. Thus $\mathcal{H'}$ is a set of at least $\frac{1}{2C}v(G)$ pairwise non-touching subgraphs of $G$, each of which is $(8-g)$-deletable, as desired.
\end{proof}

\vskip.1in

\noindent{\bf Outline of Section.} In Section~\ref{DelPurses}, we prove Lemma~\ref{PurseDeletable}. In Section~\ref{Theory}, we recall the many definitions and theorems of hyperbolic families that we need for the proof of Lemma~\ref{DeepDeletable}. In Section~\ref{KeyFamily}, we define the crucial object of study: a certain family of embedded graphs with no deletable subgraphs and then provide a proof of Lemma~\ref{DeepDeletable} assuming that family is strongly hyperbolic. In Section~\ref{PreviousResults}, we collect the other previous results about hyperbolic families that we need for the proof. In Section~\ref{DeepHyperbolic}, we prove that our crucial family is hyperbolic (see Definition~\ref{HyperbolicDef}; this is a weaker notion than strongly hyperbolic). Finally in Section~\ref{DeepStronglyHyperbolic}, we then use the result of Section~\ref{DeepHyperbolic} to prove that the crucial family is in fact strongly hyperbolic.

\subsection{Deletability for Purses}\label{DelPurses}

In order to prove Theorem~\ref{DeletablePockets} via Theorem~\ref{SurfaceStructure}, we will need to prove that every $C$-purse contains an $(8-g)$-deletable subgraph. In fact, we will prove the stronger theorem that every $C$-purse is itself $(8-g)$-deletable. This mostly follows from theorems of Thomassen as we will show. First we prove for planar graphs that every $L$-coloring of a short precolored path extends to an $L$-coloring of the whole graph using results of Thomassen as follows.

\begin{theorem}\label{PrecoloredPath}
Let $G$ be a planar graph having girth at least $g\in\{3,4,5\}$ and let $L$ be a type $345$ list-assignment for $G$. If $P$ is a path in $G$ such that $v(P)\le g-1$, then every $L$-coloring of $P$ extends to an $L$-coloring of $G$.
\end{theorem}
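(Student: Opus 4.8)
\textbf{Proof plan for Theorem~\ref{PrecoloredPath}.}

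The plan is to prove this by induction on $v(G)$, following the classical Thomassen-style argument, with the three cases $g=3,4,5$ treated in parallel but invoking the appropriate Thomassen theorem as the base for the induction hypothesis. The key point is that Thomassen's original theorems are precisely statements of this extension form: for $g=3$ (so $8-g=5$), Thomassen's proof that planar graphs are $5$-list-colorable actually shows that if all vertices have lists of size $5$ except for two adjacent vertices on the outer face which are precolored (lists of size $1$) and the remaining outer-face vertices have lists of size $3$, then an $L$-coloring exists; similarly for $g=5$ (so $8-g=3$), Thomassen's girth-$5$ theorem shows that a precolored path on two vertices on the outer face extends provided interior vertices have lists of size $3$ and the other outer vertices have lists of size $2$. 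So for $g\in\{3,5\}$ the statement is essentially a citation of the stronger inductive hypothesis Thomassen proves, once we observe that a path $P$ with $v(P)\le g-1$ vertices can be placed on (a face of) an embedding and its endpoints/vertices handled within the precoloring regime Thomassen allows.

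More carefully, first I would fix a plane embedding of $G$ and reduce to the case where $G$ is $2$-connected and $P$ lies on the outer face: if $G$ is disconnected or has a cutvertex, we split along it, apply induction to each piece (the piece containing $P$ keeps $P$ precolored, the others get a single precolored vertex at the cutvertex, which is a path on one vertex $\le g-1$), and glue. With $G$ $2$-connected and drawn so that $P$ is a subpath of the outer cycle, the statement for $g=3$ is exactly Thomassen's Theorem (every near-triangulation with outer cycle $C$, two adjacent precolored vertices on $C$, lists of size $\ge 3$ on the rest of $C$, lists of size $\ge 5$ inside, is $L$-colorable); since a path on $\le 2$ vertices is either empty, a single vertex, or a single edge, and a type-$345$ list-assignment gives all vertices lists of size $\ge 5\ge 3$, the hypotheses are met. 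For $g=4$ one uses that triangle-free planar graphs are $3$-degenerate together with the $4$-list version, and for $g=5$ one invokes Thomassen's girth-$5$ theorem in its inductive ``precolored path of length at most one on the outer walk, interior lists $3$, boundary lists $2$'' form; again a path on $\le 4$ vertices (for $g=5$, so $v(P)\le 4$) — wait, here one must be slightly careful, since Thomassen's girth-$5$ induction precolors only a path on \emph{two} vertices, not four.

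The main obstacle, then, is precisely this last point: for $g=5$ the theorem as stated allows a precolored path on up to $4$ vertices, whereas Thomassen's inductive hypothesis is stated for a precolored path on at most $2$ vertices. I expect to resolve this by a short additional argument: given a precolored path $P=p_1p_2p_3p_4$ on the outer walk, I would first color $p_1,p_2$ (the first two vertices) using the underlying Thomassen hypothesis applied to obtain an $L$-coloring of $G-\{p_3,p_4\}$ extending the precoloring of $p_1p_2$ together with the ``virtual'' constraints, or alternatively delete $P$, color $G-V(P)$ with reduced lists via the $3$-list-colorability of the girth-$5$ minor, and then check that $P$ (a path, hence $2$-degenerate) can be colored greedily from its remaining lists — but one must verify the lists on $V(P)$ stay nonempty, which uses that each $p_i$ has at most two neighbors outside $P$ on the boundary and interior neighbors contribute through the size-$3$ interior lists. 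An even cleaner route, and the one I would ultimately adopt, is to reduce $v(P)$ by one at each step: color the endpoint $p_{v(P)}$ of $P$ by a legal choice from $L(p_{v(P)})$ (which has size $\ge g-1 > $ the number of already-colored neighbors $p_{v(P)-1}$ plus, if $g=5$, room to spare), delete it, and apply the induction hypothesis to $G-p_{v(P)}$ with the now-shorter precolored path $p_1\cdots p_{v(P)-1}$ — here one must update $L$ on the neighbors of $p_{v(P)}$ and argue that the resulting instance is still ``planar, girth $\ge g$, type-$345$ with a precolored path of $\le g-2$ vertices,'' reducing to the base case $v(P)\le 2$ that Thomassen handles directly. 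Making that list-bookkeeping work — in particular checking that removing $p_{v(P)}$ and its incident color does not violate the list-size requirements on neighbors and that the girth does not drop — is the one genuinely delicate step, and I would carry it out by noting that neighbors of $p_{v(P)}$ lose at most one color and that the type-$345$ condition was chosen exactly so that there is a one-unit slack ($|L(v)|\ge 8-g$ while only $8-g-1$ colors of neighbors can ever be forbidden along a path of length $\le g-2$).
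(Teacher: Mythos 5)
For $g=3$ your argument is the paper's: cite Thomassen's stronger inductive form of the $5$-list-coloring theorem~\cite{Thom5List}, since a path on at most two vertices is a vertex or an edge and can be placed on the outer face. For $g=4$ your degeneracy idea is also essentially the paper's, but note that "triangle-free planar graphs are $3$-degenerate" is not quite enough: you need a vertex of degree at most $3$ \emph{outside} $V(P)$, which the paper extracts from Euler's formula (and this argument needs no embedding at all). Relatedly, your preliminary reduction "draw $G$ so that $P$ is a subpath of the outer cycle" is not justified once $v(P)\ge 3$: a path on three or four vertices need not lie on any face of any embedding, and $2$-connectivity does not repair this. This matters because the statement allows $P$ to be an arbitrary path in $G$, not a boundary path.

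The genuine gap is your treatment of $g=5$. The paper handles this case purely by citation: Theorem 2.1 of~\cite{Thom3Short} is an inductive statement that already allows a precolored path of the required length, with lists of size $2$ permitted on the remaining outer-face vertices, and that extra strength is exactly what makes such extension statements true. Your proposed patch --- delete the last precolored vertex $p_{v(P)}$, remove its color from the lists of its neighbors, and apply the induction hypothesis to the shorter precolored path --- fails at the list bookkeeping you flag as delicate: a neighbor of $p_{v(P)}$ outside $P$ may have a list of size exactly $8-g=3$, so after the deletion it has a list of size $2$ and the reduced instance is no longer a type $345$ instance; the statement being proven (or Thomassen's theorem in the weak form you assume, with only two precolored vertices and $3$-lists elsewhere) simply does not apply to it. The "one-unit slack" you invoke does not exist. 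Your alternative route --- color $G-V(P)$ first and then color $P$ greedily --- is also not available, because the coloring of $P$ is prescribed: the theorem must extend \emph{every} $L$-coloring of $P$, and moreover deleting $P$ again forces $2$-lists on its neighbors. In short, making a vertex-deletion induction work in the presence of boundary $2$-lists is precisely the content of Thomassen's stronger inductive theorem; it must be cited (as the paper does) or reproved, and cannot be recovered by the bookkeeping you describe.
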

\begin{proof}
For $g=3$, this follows from Thomassen's stronger inductive theorem about $5$-list-coloring planar graphs (specifically the main Theorem in~\cite{Thom5List}). For $g=5$, this follows from Thomassen's stronger inductive theorem about $3$-list-coloring planar graphs of girth at least five (specifically Theorem 2.1 in~\cite{Thom3Short}, the main result of that paper). 

For $g=4$, this is rather straightforward as follows. We proceed by induction on $v(G)$. We may assume $v(G)\ge 3$ and that $V(P)\ne V(G)$ as otherwise the result is trivial. We may assume by induction that $G$ is connected. 

We claim there exists a vertex $v\in V(G)\setminus V(P)$ of degree at most $3$. Suppose not. By Euler's formula since $v(G)\ge 3$ and $G$ is triangle-free, we have that $e(G) \le 2v(G)-4$. Thus, $\sum_{v\in V(G)} d(v) \le 4v(G)-8$. Yet $\sum_{v\in V(P)}d(v) \ge 4(v(G)-v(P))$. Combining the previous inequalities, we find that $\sum_{v\in V(P)} d_G(v) \le 4v(P)-8$. But $\sum_{v\in V(P)}d_G(v) \ge \sum_{v\in V(P)}d_P(v) = 2v(P)-2$ since $P$ is a path. It follows that $v(P)=3$ and $P$ is a component of $G$, contradicting that $G$ is connected. This proves the claim.

By induction, $\phi$ extends to an $L$-coloring of $G-v$ and hence to an $L$-coloring of $G$ as desired.
\end{proof}

We now use Theorem~\ref{PrecoloredPath} to prove Lemma~\ref{PurseDeletable} as follows.

\begin{proof}[Proof of Lemma~\ref{PurseDeletable}]
Suppose not. That is, there exists a list-assignment $L_0$ such that $|L_0(v)|\ge 8-g - (d_G(v)-d_D(v))$ for each $v\in V(D)$ and $D$ is not $L_0$-colorable.

Let $S$ be the coboundary of $D$ in $G$. First suppose $S=\emptyset$. Then $L_0$ is an $(8-g)$-list-assignment of $D$. By Theorem~\ref{PrecoloredPath} with $P=\emptyset$, there exists an $L_0$-coloring of $D$, a contradiction.

Next suppose $|S|=1$. Let $S=\{x\}$. Let $G'= G[V(D)\cup\{x\}]$. Let $c$ be a new color (that is $c\notin \bigcup_{v\in V(D)} L_0(v)$). Define a new list assignment $L$ of $G'$ as follows. Let $L(x)=\{c\}\cup R$ where $R$ is a set of $7-g$ arbitrary colors. For each $v\in V(D)$, let $L(v) = L_0(v) \cup \{c\}$. 

Now $L$ is an $(8-g)$-list-assignment of $G'$. It follows from Theorem~\ref{PrecoloredPath} with $P=x$ that there exists an $L$-coloring $\phi$ of $G'$ such that $\phi(x)=c$. But then $\phi$ induces an $L_0$-coloring of $D$, a contradiction.

Finally suppose $|S|=2$. Let $S=\{x,y\}$. Let $G'=G[V(D)\cup\{x,y\}]$. Let $G''$ be obtained from $G'$ as follows: delete the edge $xy$ if it exists and add a path $P=xv_1\ldots v_{g-3}y$ with new vertices $v_1,\ldots v_{g-3}$. Now $G''$ has girth at least $g$. Since $D$ is a purse, it follows that $G''$ is planar.

Let $\phi$ be a coloring of $P$ with entirely new colors (i.e.~ $\phi(u)\notin \bigcup_{v\in V(H)} L_0(v)$ for every $u\in V(P)$) such that $\phi(x)\ne \phi(y)$. Define a new list assignment $L$ of $G'$ as follows. For each $u\in V(P)$, let $L(u)=\{\phi(u)\}\cup R$ where $R$ is a set of $7-g$ arbitrary colors. For each $v\in V(D)$, let $L(v) = L_0(v) \cup \{\phi(u): u\in N(v)\cap V(P)\}$. Now $L$ is an $(8-g)$-list-assignment of $G''$. It follows from Theorem~\ref{PrecoloredPath} that $\phi$ extends to an $L$-coloring of $G''$. But then this induces an $L_0$-coloring of $D$, a contradiction.
\end{proof}

\subsection{Theory of Hyperbolic Families}\label{Theory}

We will need to recall a number of definitions from~\cite{PostleThomas}. First we recall the definition of a graph with rings and how they embed in a surface.

\begin{definition}[Graph with Rings - Definition 3.1 in~\cite{PostleThomas}]
A \emph{ring} is a cycle or a complete graph on one or two vertices. A \emph{graph with
rings} is a pair $(G,\R)$, where $G$ is a graph and $\R$ is a set of vertex-disjoint rings in $G$. 
\end{definition}

\begin{definition}[Embedding Graphs with Rings - Definition 3.2 in~\cite{PostleThomas}]
We say that a graph $G$ with rings $\R$ is \emph{embedded in a surface $\Sigma$} if the
underlying graph $G$ is embedded in $\Sigma$ in such a way that for every ring $R \in \R$ there exists a
component $\Gamma$ of the boundary of $\Sigma$ such that $R$ is embedded in $\Gamma$, no other vertex or edge of $G$ is embedded
in $\Gamma$, and every component of the boundary of $\Sigma$ includes some ring of $G$. 
\end{definition}

Now let us state the formal definition of a hyperbolic family. Informally the definition says that for every graph in the family the number of vertices inside a disk is at most linear in the number of vertices on the boundary of that disk.

\begin{definition}[Hyperbolic Family - Definition 5.1 in~\cite{PostleThomas}]\label{HyperbolicDef}
Let $\F$ be a family of non-null embedded graphs with rings. We say that $\F$ is \emph{hyperbolic} if there exists a constant $c>0$ such that if $G\in\F$ is a graph with rings that is embedded in a surface $\Sigma$, then for every closed curve $\gamma: \mathbb{S}^1 \rightarrow \Sigma$ that bounds an open disk $\Delta$ and intersects $G$ only in vertices, if $\Delta$ includes a vertex of $G$, then the number of vertices of $G$ in $\Delta$ is at most $c(|\{x\in \mathbb{S}^1: \gamma(x)\in V(G)\}|-1)$. We say that $c$ is a \emph{Cheeger constant} of $\F$.
\end{definition}

Finally let us also state the definition of a strongly hyperbolic family. Informally the definition extends the linearity property of hyperbolic families from disks to also include annuli (a.k.a.~cylinders).

\begin{definition}[Strongly Hyperbolic Family - Definition 7.1 in~\cite{PostleThomas}]\label{StronglyHyperbolicDef}
Let $\F$ be a hyperbolic family of embedded graphs with rings, let $c$ be a Cheeger constant for $\F$, and let $d := \lceil 3(2c + 1) \log_2(8c + 4)\rceil$. We say that $\F$ is \emph{strongly
hyperbolic} if there exists a constant $c_2$ such that for every $G \in \F$ embedded in a surface $\Sigma$ with rings and for every two disjoint cycles $C_1,C_2$ of length at most $2d$ in $G$, if there
exists a cylinder $\Lambda \subseteq \Sigma$ with boundary components $C_1$ and $C_2$, then $\Lambda$ includes at most $c_2$ vertices of $G$. We say that $c_2$ is a \emph{strong hyperbolic constant} for $\F$.
\end{definition}

A key result we need from~\cite{PostleThomas} is that the number of vertices in a graph in a strongly hyperbolic family is linear in the sum of its genus and ring vertices as follows. Before stating that result, we need one more technical but rather innocuous definition.

\begin{definition}[Closed under curve cutting - appears after Theorem 1.1 in in~\cite{PostleThomas}]
A family $\mathcal{F}$ of embedded graphs is \emph{closed under curve cutting} if for every embedded graph $G \in \mathcal{F}$ embedded in a surface $\Sigma$ and every simple closed curve $\xi$ in $\Sigma$ whose image is disjoint from $G$, if $\Sigma'$ denotes the surface obtained from $\Sigma$ by cutting open along $\xi$ and attaching disk(s) to the resulting curve(s), then the embedded graph
$G$ in $\Sigma'$ belongs to $\mathcal{F}$.
\end{definition}

\begin{theorem}[A Simplified Form of Theorem 7.2 in~\cite{PostleThomas}]\label{StronglyHyperbolicTheorem}
Let $\F$ be a strongly hyperbolic family of embedded graphs with rings such that $\mathcal{F}$ is closed under curve-cutting. Then there exists a constant $c_{\mathcal{F}}$ such the the following holds: if $G \in \mathcal{F}$ is embedded in a surface $\Sigma$ of Euler genus $g$ with a total of $R$ ring vertices, then $v(G) \le c_{\mathcal{F}}(g+R)$.
\end{theorem}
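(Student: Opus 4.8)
The plan is to obtain this statement as a direct corollary of the full structure theorem for strongly hyperbolic families, Theorem~7.2 of~\cite{PostleThomas}, retaining only the coarse vertex count from its conclusion. First I would invoke that theorem for the family $\F$: since $\F$ is strongly hyperbolic and closed under curve-cutting, it applies to every $G\in\F$ embedded in a surface $\Sigma$ of Euler genus $g$ with $R$ ring vertices. The conclusion of Theorem~7.2 of~\cite{PostleThomas} is phrased in terms of the genus and the total length of the rings (equivalently $R$), together with the Cheeger constant $c$, the strong hyperbolic constant $c_2$, and the derived quantity $d$ of Definition~\ref{StronglyHyperbolicDef}; all of these depend only on $\F$. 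Absorbing them into a single constant $c_{\mathcal{F}}$, and using that the members of $\F$ are non-null with every boundary component carrying a ring (so $R\ge 1$) to discard additive constants, yields the clean bound $v(G)\le c_{\mathcal{F}}(g+R)$.

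If instead one wanted a self-contained argument from the definitions alone, the strategy would be a breadth-first layering argument. Starting from the rings, partition $V(G)$ into distance classes (``levels''). If $v(G)$ were much larger than a suitable multiple of $g+R$, then by an averaging argument some region of $\Sigma$ would contain many consecutive non-empty levels, and inside it one could locate two disjoint cycles, each of length at most $2d$ and running ``around'' the region, that are homotopic and hence together bound a cylinder $\Lambda\subseteq\Sigma$. By Definition~\ref{StronglyHyperbolicDef}, $\Lambda$ contains at most $c_2$ vertices, which caps the number of levels the layering can traverse inside $\Lambda$; playing this against the hyperbolicity bound of Definition~\ref{HyperbolicDef}, which controls the vertices enclosed by each short separating curve by $c(2d-1)$, one concludes that only $O(g+R)$ levels occur in total and each contributes boundedly many vertices, whence $v(G)=O_{\mathcal{F}}(g+R)$. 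Closure under curve-cutting is exactly what guarantees that the sub-pieces cut off by these cycles still lie in $\F$, so that $c$, $c_2$ and $d$ remain valid for them.

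The main obstacle is precisely the part being black-boxed: producing the homotopic pair of short cycles and carrying out the genus/ring bookkeeping so that the number of ``deep'' regions is bounded by $g+R$ rather than merely finite. This is the technical core of~\cite{PostleThomas} (their Sections~5--7), including the treatment of the degenerate rings $K_1$ and $K_2$ and of non-contractible cycles not homotopic to any ring. For the present paper it suffices to cite Theorem~7.2 of~\cite{PostleThomas} and perform only the elementary repackaging of its conclusion described above.
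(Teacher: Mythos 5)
Your proposal is correct and matches what the paper does: the statement is presented as a simplified form of Theorem~7.2 of~\cite{PostleThomas} and is justified purely by citation, exactly as in your first paragraph, with the constants absorbed into a single $c_{\mathcal{F}}$. The additional breadth-first layering sketch is a reasonable outline of the underlying argument in~\cite{PostleThomas} but is not needed here.
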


\subsection{A Family with No Deletable Subgraphs}\label{KeyFamily}

Next we define our crucial family of embedded graphs with rings that we need to prove Lemma~\ref{DeepDeletable}.

\begin{definition}\label{Family}
For each $g\in \{3,4,5\}$, let $\mathcal{F}_g$ be the family of embedded graphs with rings $(G,\mathcal{R})$ such that $G-\bigcup_{R\in\mathcal{R}} E(R)$ has girth at least $g$ and there does not exist an $(8-g)$-deletable subgraph of $G$ whose vertices all lie in $G-\bigcup_{R\in\mathcal{R}} V(R)$. Let $\mathcal{F}_{345} = \mathcal{F}_3\cup \mathcal{F}_4\cup \mathcal{F}_5$.
\end{definition}

We note that $\mathcal{F}_{345}$ is clearly closed under curve cutting. Using results by the author and Thomas, we will prove in Section~\ref{DeepStronglyHyperbolic} the following theorem.

\begin{theorem}\label{DeletableStronglyHyperbolic}
$\mathcal{F}_{345}$ is strongly hyperbolic.
\end{theorem}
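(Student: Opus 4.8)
The plan is to prove that $\mathcal{F}_{345}$ is strongly hyperbolic by first establishing that it is hyperbolic (this is deferred to Section~\ref{DeepHyperbolic}) and then leveraging the strong hyperbolicity results of the author and Thomas~\cite{PostleThomas} and the author~\cite{Postle} for critical graphs. The key observation linking our family to critical graphs is this: if $(G,\mathcal{R}) \in \mathcal{F}_g$, then $G$ cannot contain a ``large'' subgraph $D$ (disjoint from the ring vertices) that is $(8-g)$-deletable; but a subgraph that is critical for $(8-g)$-list-coloring, when combined with its coboundary behavior, is precisely the obstruction to deletability. More concretely, one shows that a graph with rings lying in $\mathcal{F}_g$ must be ``locally critical'' in the sense that every disk or cylinder region bounded by short cycles, after capping off, gives a graph that is critical for $(8-g)$-list-coloring relative to the boundary precoloring — otherwise a deletable subgraph could be extracted. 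Since critical graphs for $5$-list-coloring (resp. $3$-list-coloring of girth-five graphs) form a strongly hyperbolic family by~\cite{PostleThomas} (resp.~\cite{Postle}), one then transfers the bound.

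The steps, in order, are as follows. First, I would invoke the result of Section~\ref{DeepHyperbolic} that $\mathcal{F}_{345}$ is hyperbolic with some Cheeger constant $c$; set $d := \lceil 3(2c+1)\log_2(8c+4)\rceil$ as in Definition~\ref{StronglyHyperbolicDef}. Second, fix $(G,\mathcal{R}) \in \mathcal{F}_g$ embedded in $\Sigma$, two disjoint cycles $C_1, C_2$ of length at most $2d$ in $G$, and a cylinder $\Lambda$ with boundary $C_1 \cup C_2$; we must bound $|V(G) \cap \Lambda|$. The idea is to cut $\Sigma$ along $C_1$ and $C_2$, forming a new surface $\Sigma'$ (a cylinder, topologically) in which the subgraph $G' := G[\Lambda]$ lives with $C_1, C_2$ promoted to rings. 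Third — the crucial claim — I would argue that $(G', \mathcal{R} \cup \{C_1, C_2\}) \in \mathcal{F}_g$, or at least that it has no $(8-g)$-deletable subgraph avoiding the ring vertices: indeed, any such subgraph of $G'$ would, because deletability is a purely local condition on degrees into the rest of the graph (Definition~\ref{DeletableDef}), remain $(8-g)$-deletable back in $G$, since the coboundary in $G'$ dominates the coboundary in $G$ restricted to $\Lambda$, and vertices of $G'$ not on $C_1 \cup C_2$ have the same neighbors in $G'$ as in $G$. This contradicts $(G, \mathcal{R}) \in \mathcal{F}_g$ (one must check the girth condition is preserved, which it is since $G' \subseteq G$ and we only add ring edges already present). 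Fourth, I would apply the already-known strong hyperbolicity of the family of graphs with rings that are critical-for-$(8-g)$-list-coloring (from~\cite{PostleThomas} for $g=3$ and~\cite{Postle} for $g=5$; $g=4$ handled analogously or by a degeneracy argument) together with Theorem~\ref{StronglyHyperbolicTheorem} applied to the cylinder $\Sigma'$ (genus $0$, $R = |V(C_1)| + |V(C_2)| \le 4d$ ring vertices) to conclude $v(G') \le c_{\mathcal{F}} \cdot 4d$, a constant depending only on $\Sigma$ (indeed only on $g$). Taking $c_2 := 4d \cdot c_{\mathcal{F}}$ gives the strong hyperbolic constant, completing the proof.

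The main obstacle I anticipate is the crucial claim in the third step: precisely pinning down that ``no deletable subgraph avoiding rings in $G$'' descends to ``no deletable subgraph avoiding rings in the cut-open cylinder piece $G'$,'' and relating the family $\mathcal{F}_g$ to the established strongly hyperbolic family of critical graphs. The subtlety is that a graph in $\mathcal{F}_g$ is not literally required to be critical — it is only required to have no deletable subgraph in its interior — so one needs a lemma (presumably proved in Section~\ref{DeepHyperbolic} or~\ref{PreviousResults}) showing that the absence of deletable subgraphs forces enough ``criticality-like'' structure that the hyperbolic and strongly hyperbolic machinery of~\cite{PostleThomas} applies. In particular, one must handle the interaction between the list sizes (which depend on $d_G(v) - d_H(v)$) and the ring vertices carefully, ensuring that when we cut and cap, the ``deficiency'' accounting that underlies the hyperbolicity estimates is not broken. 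A secondary, more routine obstacle is verifying the various topological bookkeeping: that cutting along two short cycles and capping yields a legitimate embedded graph with rings in $\mathcal{F}_{345}$, that genus is $0$ for the resulting cylinder, and that the girth-at-least-$g$ condition on $G - \bigcup E(R)$ survives the operation — all of which I expect to be straightforward but needing explicit statement.
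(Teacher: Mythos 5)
Your high-level framing is right (reduce the cylinder bound to what is known about critical graphs), and your step 3 is fine: since every vertex of $G'$ off $C_1\cup C_2$ has all its neighbours inside the cylinder, a deletable subgraph of $G'$ avoiding the new rings would be deletable in $G$, so $G'$ inherits membership in the family. But step 4 contains a genuine gap, and it is exactly the obstacle you flag at the end without resolving. You cannot apply Theorem~\ref{StronglyHyperbolicTheorem} to $G'$ on the cut-open cylinder: applying it with $\mathcal{F}_{345}$ is circular (its strong hyperbolicity is the statement being proved), and applying it with the critical family $\mathcal{G}_{345}$ is illegitimate because $G'$ need not be critical with respect to its rings --- ``no deletable subgraph in the interior'' is a strictly weaker hypothesis. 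The bridge you hope for (``absence of deletable subgraphs forces enough criticality-like structure'') is the actual content of the paper's proof, and a single extraction does not suffice: Proposition~\ref{DeletableToCritical} only yields \emph{some} subgraph $G''\supseteq H$ that is $H$-critical, and $G''$ may be far smaller than $G'$, so one must repeat the extraction on the uncovered part; naively iterating the linear bound $v(G'')\le k'(v(H)+g(\Sigma))$ multiplies the constant at every round and the number of rounds is not bounded a priori.

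The paper circumvents this with a quantity that composes additively rather than multiplicatively: the deficiency $d_{g,\varepsilon}(G|H)$ of Theorem~\ref{dPlanar}. Lemma~\ref{dDeletable} iterates Proposition~\ref{DeletableToCritical} inductively, using Proposition~\ref{dSum} to add up the nonnegative increments, and concludes $d_{g,\varepsilon}(G'|H)\ge 0$ for \emph{connected} $H$ whenever there is no deletable subgraph off $H$; Lemma~\ref{dBound} then converts this, via Euler's formula, into the desired linear bound $v(G')\le \frac{g+\varepsilon}{\varepsilon}v(H)$. The second ingredient you are missing is how to handle the cylinder when $H=C_1\cup C_2$ is disconnected (the density lemma needs $H$ connected): the paper proves Lemma~\ref{dDeletableCylinder}, which bounds the distance between the two boundary cycles by $k(v(C_1)+v(C_2))$ --- and it is only here, inside that lemma, that the known strong hyperbolicity of the critical family enters, through Corollary~\ref{OldLinear} --- and then joins $C_1$ to $C_2$ by a shortest path $P$, applies Lemma~\ref{dDeletable} to the connected subgraph $H\cup P$, and finishes with Lemma~\ref{dBound}. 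So your proposal identifies the correct external inputs but omits the two mechanisms (additive density iteration, and the distance/shortest-path reduction to the connected case) that make the transfer from criticality to non-deletability actually go through.
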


The following is an immediate corollary of Theorem~\ref{DeletableStronglyHyperbolic} and Theorem~\ref{StronglyHyperbolicTheorem}.

\begin{corollary}\label{DeletableLinear}
There exists $k>0$ such that the following holds: Let $G$ be embedded in a surface $\Sigma$ of genus $g(\Sigma)$ having girth at least $g\in \{3,4,5\}$. If $H$ is a subgraph of $G$ such that there does not exist an induced subgraph of $G-V(H)$ that is $(8-g)$-deletable in $G$, then $v(G) \le k(v(H)+g(\Sigma))$.
\end{corollary}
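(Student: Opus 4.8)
The plan is to realize the pair $(G,\mathcal{R})$, for a suitable choice of rings $\mathcal{R}$, as a member of the strongly hyperbolic family $\mathcal{F}_{345}$ and then read off the bound from Theorem~\ref{StronglyHyperbolicTheorem}. We may assume $v(G)\ge 1$ (otherwise the statement is trivial) and, after capping off any boundary components of $\Sigma$ with disks (which does not increase the Euler genus), that $\Sigma$ is a closed surface.

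The first step is a purely topological surgery that turns each vertex of $H$ into a $K_1$-ring while leaving the graph $G$ itself untouched. For each $v\in V(H)$ choose a small disk neighbourhood $D_v$ of $v$ in $\Sigma$, pairwise disjoint, meeting $G$ only in $v$ together with stubs of the edges incident with $v$; replace each $D_v$ by an annulus with $v$ placed on its inner boundary circle, re-routing the $d_G(v)$ edge-stubs as a fan from $v$ through the annulus to their original endpoints on $\partial D_v$ in the cyclic order given by the rotation at $v$. The result is an embedding of $G$ in a surface $\Sigma'$ whose boundary consists of exactly $v(H)$ circles, one of which, call it $\Gamma_v$, passes through each $v\in V(H)$ and carries no other vertex or edge of $G$. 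Since we only deleted small open disks from $\Sigma$, the Euler genus of $\Sigma'$ equals that of $\Sigma$, namely $g(\Sigma)$, and the total number of ring vertices of $(G,\mathcal{R})$, with $\mathcal{R}=\{\{v\}:v\in V(H)\}$, is exactly $v(H)$.

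Next I would check that $(G,\mathcal{R})\in\mathcal{F}_g\subseteq\mathcal{F}_{345}$. The rings are single vertices, so $\bigcup_{R\in\mathcal{R}}E(R)=\emptyset$ and hence $G-\bigcup_{R\in\mathcal{R}}E(R)=G$ has girth at least $g$ by hypothesis. Also $\bigcup_{R\in\mathcal{R}}V(R)=V(H)$, so an induced subgraph of $G$ all of whose vertices lie outside the rings is precisely an induced subgraph of $G-V(H)$; by the hypothesis of the corollary no such subgraph is $(8-g)$-deletable in $G$. Here it is relevant that deletability is evaluated in the underlying graph $G$, which the surgery did not change, so the outside-degree $d_G(w)-d_D(w)$ of every vertex $w$ of such a subgraph $D$ is the same before and after the surgery. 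Thus $(G,\mathcal{R})$ satisfies the defining conditions of $\mathcal{F}_g$.

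Finally I would apply the machinery: $\mathcal{F}_{345}$ is strongly hyperbolic by Theorem~\ref{DeletableStronglyHyperbolic} and, as noted right after Definition~\ref{Family}, is closed under curve cutting, so Theorem~\ref{StronglyHyperbolicTheorem} supplies a constant $c_{\mathcal{F}}$ such that any member of $\mathcal{F}_{345}$ embedded in a surface of Euler genus $g'$ with $R$ ring vertices has at most $c_{\mathcal{F}}(g'+R)$ vertices. Applied to $(G,\mathcal{R})$ embedded in $\Sigma'$, this gives $v(G)\le c_{\mathcal{F}}(g(\Sigma)+v(H))$, so $k:=c_{\mathcal{F}}$ works. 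The only step requiring any care is the surgery — one must be sure a vertex can be converted into a $K_1$-ring without disturbing $G$ or the Euler genus and while leaving the remaining vertices of $V(H)$ available for the same treatment — but this is a routine move in the graphs-with-rings framework (and if one instead blows $v$ up into a short ring cycle of constant length, the argument goes through with a slightly worse constant), and everything else is bookkeeping plus citation, which is why the statement is flagged as an immediate corollary.
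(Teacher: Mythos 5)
Your proposal is correct and follows essentially the same route as the paper: view the vertices of $H$ as $K_1$-rings, observe the resulting embedded graph with rings lies in $\mathcal{F}_{345}$, and apply Theorem~\ref{DeletableStronglyHyperbolic} together with Theorem~\ref{StronglyHyperbolicTheorem}. The only (immaterial) differences are that you spell out the boundary surgery explicitly and keep the edges of $H$, whereas the paper passes to $G-E(H)$; since $K_1$-rings have no edges and deletability of subgraphs disjoint from $V(H)$ is unaffected, both versions verify membership in the family equally well.
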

\begin{proof}
By Theorem~\ref{DeletableStronglyHyperbolic}, we have that $\mathcal{F}_{345}$ is strongly hyperbolic. We let $k=c_{\mathcal{F}_{345}}$ be as in Theorem~\ref{StronglyHyperbolicTheorem}.

Now let $G'=G-E(H)$. We view $G'$ as a graph with rings where each vertex of $H$ is a ring. Now $G'$ also has girth at least $g$ and is embedded in $\Sigma$. Since there does not exist a subgraph of $G-V(H)$ that is $(8-g)$-deletable in $G$, it follows that $G'\in \mathcal{F}_{345}$. Hence by Theorem~\ref{StronglyHyperbolicTheorem}, we have that $v(G)\le k(v(H)+g(\Sigma))$ as desired.
\end{proof}

We now prove Lemma~\ref{DeepDeletable} assuming Theorem~\ref{DeletableStronglyHyperbolic}.

\begin{proof}[Proof of Lemma~\ref{DeepDeletable}]
Let $k_0$ be as in Corollary~\ref{DeletableLinear}. We let $k= k_0(g(\Sigma)+1)+1$. Suppose for a contradiction that there does not exist $X\subseteq V(D)$ such that $G[X]$ is $(8-g)$-deletable in $G$. 

Let $H$ be the coboundary of $D$ in $G$. Let $G'=G[V(H)\cup V(D)]$. Note then that there does not exist an induced subgraph of $G'-V(H)$ that is deletable in $G'$ (as otherwise it would be $(8-g)$-deletable in $G$). Now by Corollary~\ref{DeletableLinear}, we have that $v(G')\le k_0(v(H)+g(\Sigma))$. 

Since $D$ is $k$-deep, $v(H)$ is non-empty by definition and hence $v(H)\ge 1$. Thus $k_0(v(H)+g(\Sigma))< kv(H)$ and so $v(G') < kv(H)$. Rewriting, we have that $v(H) > \frac{1}{k}v(G') > \frac{1}{k}v(D)$. Yet since $D$ is $k$-deep, we have by definition that $v(H) \le \frac{1}{k}v(D)$, a contradiction.
\end{proof}

\subsection{Previous Results on Hyperbolic Families}\label{PreviousResults}

In order to prove Theorem~\ref{DeletableStronglyHyperbolic}, we will first prove that $\mathcal{F}_{345}$ is hyperbolic and then use that in the proof that $\mathcal{F}_{345}$ is strongly hyperbolic. However, we will need a number of previous results for both those proofs which we now collect.

First we recall the definition for being critical with respect to a subgraph for list-coloring.

\begin{definition}
Let $G$ be a graph and let $H$ be a subgraph of $G$. If $L$ is a list assignment of $G$, then we say $G$ is \emph{$H$-critical with respect to $L$} if the following holds: for every proper subgraph $G'$ of $G$ containing $H$, there exists an $L$-coloring of $H$ that extends to $G'$ but not to $G$. We say $G$ is \emph{$H$-critical for $k$-list-coloring} if there exists a $k$-list-assignment such that $G$ is $H$-critical with respect to $L$.
\end{definition}

\begin{definition}
For each $g\in \{3,4,5\}$, let $\mathcal{G}_g$ be the family of embedded graphs with rings $(G,\mathcal{R})$ such that every cycle in $G$ of length at most $(g-1)$ is not null-homotopic and $G$ is $\bigcup_{R\in \mathcal{R}} R$-critical for $(8-g)$-list-coloring. Let $\mathcal{G}_{345} = \mathcal{G}_3\cup \mathcal{G}_4\cup \mathcal{G}_5$.
\end{definition}

It was proved by the author and Thomas in~\cite{PostleThomas2} that $\mathcal{G}_{3}$ is hyperbolic and by Dvo\v{r}\'ak and Kawarabayashi in~\cite{DvoKawSODA} that $\mathcal{G}_{5}$ is hyperbolic (the proof that $\mathcal{G}_{4}$ is hyperbolic is easy by comparison and can be found in~\cite{PostleThomas}). The author and Thomas proved that $\mathcal{G}_{345}$ is strongly hyperbolic as follows.

\begin{theorem}\label{OldStronglyHyperbolic}
$\mathcal{G}_{345}$ is strongly hyperbolic.
\end{theorem}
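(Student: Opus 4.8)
\textbf{Proof proposal for Theorem~\ref{OldStronglyHyperbolic} ($\mathcal{G}_{345}$ is strongly hyperbolic).}

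The plan is to treat the three girth cases separately and then amalgamate. First I would recall that, by the results cited just before the statement, each of $\mathcal{G}_3$, $\mathcal{G}_4$, $\mathcal{G}_5$ is hyperbolic; let $c_3, c_4, c_5$ be their respective Cheeger constants and set $c := \max\{c_3,c_4,c_5\}$. Since a uniform Cheeger constant is inherited by the union, $\mathcal{G}_{345}$ is hyperbolic with Cheeger constant $c$, and hence $d := \lceil 3(2c+1)\log_2(8c+4)\rceil$ is well-defined and common to all three subfamilies. The task is then to produce a strong hyperbolic constant $c_2$: a bound on the number of vertices in any cylinder $\Lambda$ whose two boundary components are disjoint cycles $C_1, C_2$ of length at most $2d$ in some $G \in \mathcal{G}_{345}$.

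Next I would reduce the cylinder problem to a planar/disk problem to which hyperbolicity applies. Given $G \in \mathcal{G}_g$ embedded in $\Sigma$ with such a cylinder $\Lambda$, consider the subgraph $G_\Lambda$ induced by the vertices in the closed cylinder. The key structural point is that $G$ is $\bigcup_{R\in\mathcal{R}} R$-critical for $(8-g)$-list-coloring, and criticality is inherited in a controlled way when one cuts along a short cycle: cutting $\Sigma$ along $C_1$ (respectively $C_2$) and capping with a disk turns $C_i$ into (or into something coverable by) a bounded number of new ring vertices, and the piece living in the cylinder, together with $C_1$ and $C_2$ as rings, again lies in a hyperbolic family of the appropriate girth (this is exactly the kind of ``closed under curve cutting / ring creation'' argument that underlies the passage from $\mathcal{G}_g$ to disk bounds). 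Having reduced to a disk situation with boundary of size at most $2d + 2d = 4d$ ring vertices, hyperbolicity of $\mathcal{G}_g$ (Cheeger constant $c$) bounds the number of interior vertices by $c(4d-1)$, and adding back the at most $4d$ boundary vertices gives a bound $c_2$ depending only on $c$ and $d$, hence only on the (finitely many) Cheeger constants of $\mathcal{G}_3,\mathcal{G}_4,\mathcal{G}_5$. Taking the maximum over $g \in \{3,4,5\}$ yields a single strong hyperbolic constant for $\mathcal{F} = \mathcal{G}_{345}$.

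The step I expect to be the main obstacle is the reduction in the previous paragraph: namely, verifying that after cutting the cylinder open and capping the two ends, the resulting graph-with-rings genuinely lands back in $\mathcal{G}_g$ (that is, that the relevant criticality and the non-null-homotopy-of-short-cycles conditions survive the surgery), and that the short cycles $C_1, C_2$ can legitimately be promoted to rings without creating forbidden null-homotopic short cycles or destroying criticality. One must be careful that a cycle of length up to $2d$ may be much longer than $g-1$, so the ``every short cycle is non-null-homotopic'' clause is about cycles of length at most $g-1$ only and is not directly threatened; the real work is the criticality bookkeeping under $2$-cutting, which is standard in this literature but technical. I would also double-check the degenerate cases where $C_1$ and $C_2$ are not vertex-disjoint from the rings of $G$ or where the cylinder is non-essential, handling them by a direct appeal to hyperbolicity. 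Modulo these surgery lemmas (which are of the same flavor as those already invoked to establish hyperbolicity of the $\mathcal{G}_g$), the proof is a clean combination of the three known hyperbolicity facts with the uniform-constant observation, exactly paralleling how Theorem~7.2 of~\cite{PostleThomas} is set up.
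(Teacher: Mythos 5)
Your proposal has a genuine gap, and it is located exactly where you flagged your ``main obstacle'': the reduction of the cylinder to a disk does not work, and no such reduction can work, because strong hyperbolicity is not a formal consequence of hyperbolicity. Hyperbolicity bounds the vertices inside a \emph{disk} linearly in the number of boundary vertices; strong hyperbolicity asks for the same inside an \emph{annulus}, and an annulus contains no disk capturing all of its vertices, so the Cheeger constant gives you nothing directly. The standard obstruction is a long thin tube: a graph in which every null-homotopic short closed curve bounds a disk with few vertices (so hyperbolicity holds) but the cylinder between two short essential cycles $C_1,C_2$ contains arbitrarily many vertices. Your argument, if valid, would show every hyperbolic family closed under curve cutting is strongly hyperbolic, which is false. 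Concretely, two steps fail. First, ``closed under curve cutting'' only licenses cutting along curves \emph{disjoint} from $G$; cutting along the cycle $C_1$ and promoting it to a ring requires that the resulting graph be $\bigl(\bigcup\mathcal{R}\cup C_1\bigr)$-critical, which criticality relative to $\mathcal{R}$ alone does not give (you can pass to a critical subgraph, but then you have lost control of the discarded vertices). Second, even granting the surgery, the piece in the cylinder with rings $C_1,C_2$ is an embedded graph in a sphere with two holes, and there is no single disk there to which the Cheeger bound applies; the bound ``linear in total ring vertices'' for such a piece is Theorem~\ref{StronglyHyperbolicTheorem}, whose hypothesis is strong hyperbolicity --- so the argument is circular. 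The paper itself points at this asymmetry: the connectedness hypothesis in Theorem~\ref{dPlanar} corresponds to the disk/hyperbolicity case, while precolored subgraphs with two components correspond to the annulus/strong hyperbolicity case and require separate, harder density theorems.

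For comparison, the paper's proof of Theorem~\ref{OldStronglyHyperbolic} is purely a citation: strong hyperbolicity of $\mathcal{G}_3$, $\mathcal{G}_4$, $\mathcal{G}_5$ is Lemmas~7.5, 7.6 and 7.8 of~\cite{PostleThomas} respectively, and these in turn rest on the deep results of~\cite{PostleThomas1,PostleThomas2,PostleThomas3} and~\cite{Postle} (only the girth-four case is easy). The one correct observation in your write-up --- that a union of finitely many strongly hyperbolic families with a common Cheeger constant is strongly hyperbolic --- is indeed the glue the paper uses, but the substance of the theorem cannot be recovered from hyperbolicity alone.
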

\begin{proof}
$\mathcal{G}_3$ is strongly hyperbolic by Lemma 7.5 in~\cite{PostleThomas}. $\mathcal{G}_4$ is strongly hyperbolic by Lemma 7.6 in~\cite{PostleThomas}. $\mathcal{G}_5$ is strongly hyperbolic by Lemma 7.8 in~\cite{PostleThomas}. Hence $\mathcal{G}_{345}$ is also strongly hyperbolic.
\end{proof}

We remark that the proof of Lemmas 7.8 in~\cite{PostleThomas} relies on the work of the author in~\cite{Postle} for $3$-list-coloring girth five graphs. Similarly the proof of Lemma 7.5 relies on the work by the author and Thomas in the series of papers~\cite{PostleThomas1,PostleThomas2,PostleThomas3} (see~\cite{PostleThesis} for a full proof). On the other hand, the proof of Lemma 7.6 for $4$-list-coloring triangle-free graphs is once again quite easy by comparison (and can be found in Lemmas 5.10 and 7.6 in~\cite{PostleThomas}). 

Combining Theorem~\ref{OldStronglyHyperbolic} with Theorem~\ref{StronglyHyperbolicTheorem}, we obtain the following corollary.

\begin{corollary}\label{OldLinear}
There exists $k'>0$ such that the following holds: Let $G$ be embedded in a surface $\Sigma$ of genus $g(\Sigma)$ having girth at least $g\in \{3,4,5\}$. If $H$ is a subgraph of $G$ such that $G$ is $H$-critical for $(8-g)$-list-coloring, then $v(G) \le k'(v(H)+g(\Sigma))$.
\end{corollary}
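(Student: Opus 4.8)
The plan is to run the argument of Corollary~\ref{DeletableLinear} essentially verbatim, with $\mathcal{G}_{345}$ in place of $\mathcal{F}_{345}$ and Theorem~\ref{OldStronglyHyperbolic} in place of Theorem~\ref{DeletableStronglyHyperbolic}. First I would record the two ingredients needed to apply Theorem~\ref{StronglyHyperbolicTheorem}: by Theorem~\ref{OldStronglyHyperbolic} the family $\mathcal{G}_{345}$ is strongly hyperbolic, and it is closed under curve cutting. The latter is routine, since cutting along a simple closed curve disjoint from $G$ changes only the ambient surface, not $G$ or its set of rings, so the defining conditions of $\mathcal{G}_g$ (criticality with respect to the union of the rings, and every cycle of length at most $g-1$ being non-null-homotopic) are preserved; moreover, in our situation $G$ has girth at least $g$, so $G$ has no cycle of length at most $g-1$ at all and the homotopy clause is vacuous throughout. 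Thus Theorem~\ref{StronglyHyperbolicTheorem} applies to $\mathcal{G}_{345}$ and yields a constant $c_{\mathcal{G}_{345}}$; I set $k' = c_{\mathcal{G}_{345}}$.

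Next I would realise the pair $(G,H)$ as a graph with rings belonging to $\mathcal{G}_{345}$. Embedding $G$ in $\Sigma$ and deleting a small open disk around each vertex of $H$ turns $G$ into a graph with rings $(G,\mathcal{R})$ in which $\mathcal{R}$ consists of exactly the $v(H)$ one-vertex rings on $V(H)$, so the total number of ring vertices is $v(H)$. Since $G$ has girth at least $g$ the short-cycle clause holds vacuously, and since $G$ is $H$-critical for $(8-g)$-list-coloring it is critical with respect to the union of the rings (see below), so $(G,\mathcal{R})\in\mathcal{G}_{345}$. Applying Theorem~\ref{StronglyHyperbolicTheorem} to $(G,\mathcal{R})$ then gives $v(G)\le c_{\mathcal{G}_{345}}\bigl(g(\Sigma)+v(H)\bigr)=k'\bigl(v(H)+g(\Sigma)\bigr)$, which is the desired bound.

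The one point that requires genuine care — and which I expect to be the main obstacle — is the passage from ``$G$ is $H$-critical for $(8-g)$-list-coloring'' to ``$(G,\mathcal{R})$ is critical with respect to $\bigcup_{R\in\mathcal{R}}R$'', since the criticality built into the definition of $\mathcal{G}_{345}$ is with respect to a disjoint union of rings rather than an arbitrary subgraph $H$; concretely, the ring notion ranges over proper subgraphs that need not retain all edges of $H$, so a priori it is the stronger condition. When $H$ is itself a disjoint union of rings (cycles and complete graphs on at most two vertices) the two notions coincide and there is nothing to do. In general I would reduce to this case by a standard modification, attaching to $H$ a critical ``gadget'' of bounded size that forces, on the vertices of $H$, exactly the colorings realised by an $L$-coloring of $H$; this produces a graph $G^{+}\supseteq G$ of the same girth, still embeddable in $\Sigma$, that is critical with respect to a disjoint union of rings $H^{+}$ with $v(H^{+})=O(v(H))$ and $v(G^{+})\ge v(G)$, and one then applies the previous two paragraphs to $G^{+}$ (the implied constant is absorbed into $k'$). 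Verifying that such a gadget exists for type $345$ list-assignments and that attaching it creates no short null-homotopic cycles and destroys no criticality is the crux; this is the sort of bookkeeping carried out in the hyperbolicity framework of~\cite{PostleThomas}, and once it is in hand the bound for $G$ follows from the bound for $G^{+}$.
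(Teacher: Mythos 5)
Your first paragraph matches the paper's proof: strong hyperbolicity of $\mathcal{G}_{345}$ from Theorem~\ref{OldStronglyHyperbolic}, curve-cutting closure (vacuous homotopy clause since $G$ has girth at least $g$), and $k'=c_{\mathcal{G}_{345}}$ from Theorem~\ref{StronglyHyperbolicTheorem}. But the step you yourself flag as the crux is a genuine gap. You keep the edges of $H$ in $G$, correctly observe that criticality with respect to the edgeless union of one-vertex rings is a priori stronger than $H$-criticality, and then outsource the reduction to an unspecified bounded-size ``gadget'' that forces on $V(H)$ exactly the $L$-colorings of $H$. Neither this paper nor \cite{PostleThomas} supplies such a gadget, and the proposal is doubtful on its face: the gadget would have to attach to all of $V(H)$, whose vertices may be scattered over $\Sigma$, without raising the genus, without creating short cycles, without destroying criticality, with only $O(v(H))$ ring vertices lying on boundary components, and for list colorings there is no bounded construction that restricts the colorings of a vertex set to exactly those proper on an arbitrary graph $H$. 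Your worry is also warranted in your own setup: with $E(H)$ retained, ring-criticality need not follow from $H$-criticality (for the proper subgraph $G-e$ with $e=uv\in E(H)$ one would need an $L$-coloring of $V(H)$ with $\phi(u)=\phi(v)$ that extends to $G-e$, and $H$-criticality, which only speaks of proper colorings of $H$, provides no such thing).

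The paper's fix is much simpler and removes the obstacle rather than fighting it: pass to $G'=G-E(H)$ and declare each vertex of $H$ a one-vertex ring, so the union of the rings is the edgeless graph $S$ on $V(H)$. Then $H$-criticality of $G$ transfers directly: given a proper subgraph $G_1'$ of $G'$ containing $S$, set $G_1=G_1'\cup H$, which is a proper subgraph of $G$ containing $H$; $H$-criticality yields an $L$-coloring $\phi$ of $H$ that extends to $G_1$, hence to $G_1'$, but not to $G$; and $\phi$ cannot extend to $G'$, since any such extension is proper on $E(G)\setminus E(H)$ and, because $\phi$ is proper on $H$, also on $E(H)$, i.e.\ it would be an $L$-coloring of $G$ extending $\phi$. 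Thus $G'\in\mathcal{G}_{345}$, it has the same vertex set and embedding, the ring vertices number $v(H)$, and Theorem~\ref{StronglyHyperbolicTheorem} gives $v(G)=v(G')\le k'(v(H)+g(\Sigma))$ with no gadget needed. So your outline is right up to the reduction, but the reduction as proposed is both unsubstantiated and unnecessary; replacing it with the deletion of $E(H)$ is what completes the argument.
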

\begin{proof}
By Theorem~\ref{OldStronglyHyperbolic}, we have that $\mathcal{G}_{345}$ is strongly hyperbolic. We let $k'=c_{\mathcal{G}_{345}}$ be as in Theorem~\ref{StronglyHyperbolicTheorem}.

Now let $G'=G-E(H)$. We view $G'$ as a graph with rings where each vertex of $H$ is a ring. Now $G'$ also has girth at least $g$ and is embedded in $\Sigma$. Since $G$ is $H$-critical for $(8-g)$-list-coloring, it follows that $G'$ is also $H$-critical for $(8-g)$-list-coloring. Thus $G'\in \mathcal{G}_{345}$. Hence by Theorem~\ref{StronglyHyperbolicTheorem}, we have that $v(G)\le k'(v(H)+g(\Sigma))$ as desired.
\end{proof}

To prove Theorem~\ref{DeletableStronglyHyperbolic}, we will actually need a stronger result, namely instead of the hyperbolicity of $\mathcal{G}_{345}$, we will need a stronger density result (Theorem~\ref{dPlanar} below). Theorem~\ref{dPlanar} will be the key to proving that $\mathcal{F}_{345}$ is hyperbolic. Finally to prove that $\mathcal{F}_{345}$ is strongly hyperbolic, we will also use Theorem~\ref{dPlanar} along with the strong hyperbolicity of $\mathcal{G}_{345}$ .

To state the density result, we first define a crucial notion of the density of a graph over a subgraph.

\begin{definition}
Let $g\in \{3,4,5\}$ and $\varepsilon > 0$. If $G$ is a graph and $H$ is a subgraph of $G$, define

$$d_{g,\varepsilon}(G|H) = (g-2)(e(G)-e(H)) - (g+\varepsilon)(v(G)-v(H)).$$
\end{definition}

We will also need the following easy proposition about $d_{g,\varepsilon}$.

\begin{proposition}\label{dSum}
Let $g\in \{3,4,5\}$ and $\varepsilon > 0$. If $H\subseteq G' \subseteq G$, then 
$$d_{g,\varepsilon}(G|H) = d_{g,\varepsilon}(G|G')+d_{g,\varepsilon}(G'|H).$$
\end{proposition}
\begin{proof}
This follows from the definition of $d_{g,\varepsilon}$ since $e(G)-e(H)=(e(G)-e(G')) + (e(G')-e(H))$ and $v(G)-v(H) = (v(G)-v(G')) + (v(G')-v(H))$.
\end{proof}

For a planar graph $G$ of girth at least $g$, satisfying $d_{g,\varepsilon}(G|H)\ge 0$ implies an upper bound on $v(G)$ that is linear in $v(H)$ as our next lemma shows (a useful fact for proving the linear bounds required for hyperbolicity and strong hyperbolicity and hence the reason for the introduction of $d_{g,\varepsilon}$).

\begin{lemma}\label{dBound}
Let $g\in \{3,4,5\}$ and $\varepsilon > 0$. If $G$ is a planar graph having girth at least $g$ and $H$ is a subgraph of $G$ such that $d_{g,\varepsilon}(G|H) \ge 0$, then

$$v(G) \le \frac{g+\varepsilon}{\varepsilon} v(H).$$
\end{lemma}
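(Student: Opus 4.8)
\textbf{Proof plan for Lemma~\ref{dBound}.}

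The plan is to combine the hypothesis $d_{g,\varepsilon}(G|H)\ge 0$ with the standard Euler-formula bound on the number of edges in a planar graph of girth at least $g$. First I would recall that for a planar graph $G$ of girth at least $g\ge 3$ with at least one edge (the case $v(G)\le 1$ being trivial since then $v(G)\le v(H)$ after noting $H\subseteq G$ forces $v(H)\ge v(G)$ when... actually one must be slightly careful here, see below), Euler's formula gives
\[
e(G) \le \frac{g}{g-2}\bigl(v(G)-2\bigr) < \frac{g}{g-2}\,v(G),
\]
and hence $(g-2)e(G) \le g\,v(G)$ (the strict inequality absorbing the $-2$). Of course $e(H)\ge 0$, so $(g-2)e(G) \ge (g-2)(e(G)-e(H)) \ge (g-2)e(G) - (g-2)e(H)$; what I actually want is a lower bound on $(g-2)(e(G)-e(H))$, which the hypothesis supplies directly.

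The main computation is then: from $d_{g,\varepsilon}(G|H)\ge 0$ we get
\[
(g-2)\bigl(e(G)-e(H)\bigr) \ge (g+\varepsilon)\bigl(v(G)-v(H)\bigr).
\]
On the other hand, $(g-2)(e(G)-e(H)) \le (g-2)e(G) \le g\,v(G)$ (using $e(H)\ge 0$ and the Euler bound above; one needs to double check the inequality $(g-2)e(G)\le g\,v(G)$ holds including small cases, e.g. $v(G)\le 2$, where $e(G)=0$ or $e(G)=1$ and it holds trivially for $g\ge 3$). Combining these two inequalities yields
\[
g\,v(G) \ge (g+\varepsilon)\bigl(v(G)-v(H)\bigr) = (g+\varepsilon)v(G) - (g+\varepsilon)v(H),
\]
so $(g+\varepsilon)v(H) \ge (g+\varepsilon)v(G) - g\,v(G) = \varepsilon\, v(G)$, i.e. $v(G) \le \frac{g+\varepsilon}{\varepsilon}v(H)$, as desired.

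The step I expect to be the only real (if minor) obstacle is getting the Euler-formula edge bound to hold uniformly across the degenerate cases — a planar graph of girth at least $g$ on fewer than $g$ vertices has no cycle at all, so one should really phrase the bound as: every planar graph $G$ satisfies $(g-2)e(G) \le g\,v(G)$ for $g\ge 3$, which follows from $e(G)\le \frac{g}{g-2}(v(G)-2)$ when $G$ has a cycle and is immediate (since $e(G)\le v(G)-1 \le \frac{g}{g-2}v(G)$) when $G$ is a forest. Once that clean inequality is in hand, the rest is the one-line manipulation above. No use of connectivity of $G$ or of $H$ is needed, and $H$ being an arbitrary subgraph is fine since we only used $e(H)\ge 0$ and $v(H)\ge 0$ together with the sign of $v(G)-v(H)$; note that if $v(G)\le v(H)$ the conclusion is immediate anyway since $\frac{g+\varepsilon}{\varepsilon}>1$.
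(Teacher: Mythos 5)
Your proposal is correct and follows essentially the same route as the paper: rearrange $d_{g,\varepsilon}(G|H)\ge 0$, discard the $e(H)$ term using $e(H)\ge 0$, and absorb $(g-2)e(G)$ via the Euler-formula bound $(g-2)e(G)\le g\,v(G)$ for planar graphs of girth at least $g$, yielding $\varepsilon v(G)\le (g+\varepsilon)v(H)$. Your extra care with the degenerate (forest/small) cases of the Euler bound is a harmless refinement of what the paper states tersely.
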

\begin{proof}
We have that $d_{g,\varepsilon}(G|H) = (g-2)(e(G)-e(H)) - (g+\varepsilon)(v(G)-v(H)) \ge 0$. Rearranging, we have that 

$$\varepsilon v(G) \le (g+\varepsilon)v(H) - (g-2)e(H) + ((g-2)e(G) - gv(G)).$$

\noindent Since $G$ is a planar graph having girth at least $g$, we have by Euler's formula that $e(H)\le \frac{g}{g-2}v(G)$. Combining this observation with the fact that $e(H)\ge 0$, we find that
$$v(G) \le \frac{(g+\varepsilon)}{\varepsilon}v(H),$$
as desired.
\end{proof}

We may now state the density result which combines earlier theorems of the author~\cite{Postle} and the author and Thomas~\cite{PostleThomas}. 

\begin{theorem}\label{dPlanar}
There exists $\varepsilon > 0$ such that following holds:
If $G$ is a plane graph having girth at least $g\in\{3,4,5\}$ and $H$ is a connected subgraph of $G$ such that $G$ is $H$-critical for $(8-g)$-list-coloring, then 
$$d_{g,\varepsilon}(G|H)\ge 0.$$
\end{theorem}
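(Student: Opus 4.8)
The plan is to reduce the statement to the known density bounds for critical graphs established in~\cite{Postle} and~\cite{PostleThomas}, which are exactly of the form ``critical graphs of girth at least $g$ are sparse.'' Concretely, for each $g\in\{3,4,5\}$ those papers prove (for appropriate $\varepsilon_g>0$) that if $G$ is a plane graph of girth at least $g$ that is $H$-critical for $(8-g)$-list-coloring with $H$ connected, then the quantity $(g-2)e(G)-(g+\varepsilon_g)v(G)$ exceeds (or equals) the corresponding quantity for $H$; rearranged, this is precisely $d_{g,\varepsilon_g}(G|H)\ge 0$. So the first step is simply to locate the relevant statements: for $g=5$ this is the main potential/density theorem of~\cite{Postle} on $3$-list-coloring graphs of girth five; for $g=3$ this is the density theorem for $5$-list-critical plane graphs from the author--Thomas series underlying~\cite{PostleThomas}; and for $g=4$ it is the comparatively easy triangle-free case (a short Euler-formula discharging argument). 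Having done that, set $\varepsilon=\min\{\varepsilon_3,\varepsilon_4,\varepsilon_5\}$, and observe that decreasing $\varepsilon$ only makes $d_{g,\varepsilon}(G|H)$ larger (since $v(G)-v(H)\ge 0$), so the bound for the smaller $\varepsilon$ follows from the bound for the larger one. This handles all three girths uniformly.

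The second step is to normalize the hypotheses so that they match the cited theorems verbatim. In particular, $H$-criticality already forces $H$ to be an induced subgraph and $G$ to be connected with no separating structure that would let one delete a vertex or edge freely; if the cited results are stated for the ordinary notion of ``$k$-critical relative to a precolored subgraph'' rather than ``$H$-critical with respect to some list-assignment,'' one passes between the two by fixing the witnessing $(8-g)$-list-assignment $L$ and noting that $H$-criticality with respect to $L$ is exactly what those theorems assume. One should also check the degenerate cases: if $G=H$ then $d_{g,\varepsilon}(G|H)=0$ trivially, and if $H$ is a single vertex or a short path the cited bounds still apply (indeed this is the regime in which they are usually stated, e.g.\ $H$ a path of length at most $g-2$).

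The third step, if one wants a self-contained treatment rather than a pure citation, is to reprove the $g=4$ case directly, since it is genuinely elementary: a minimal counterexample $G$ that is $H$-critical for $4$-list-coloring with $H$ connected and $d_{4,\varepsilon}(G|H)<0$ has every vertex of $V(G)\setminus V(H)$ of degree at least $4$ (lower-degree vertices are deletable by greedy extension), and then Euler's formula $e(G)\le 2v(G)-4$ together with the degree count yields a contradiction for any $\varepsilon$ small enough — essentially the same discharging as in the proof of Theorem~\ref{PrecoloredPath} above.

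The main obstacle is not in this lemma's own argument but in unpacking what the cited theorems actually deliver: the papers~\cite{Postle} and~\cite{PostleThomas} phrase their density bounds in terms of potential functions with their own normalization constants, and the real work is verifying that $(g-2)e(G)-(g+\varepsilon)v(G)\ge (g-2)e(H)-(g+\varepsilon)v(H)$ is the correct translation — i.e.\ that the $\varepsilon$ appearing there can be taken strictly positive and that the potential is bounded below by its value on the precolored connected subgraph $H$ rather than by a constant. Once the correspondence between $d_{g,\varepsilon}$ and those potentials is pinned down, the theorem is immediate; the bulk of the write-up will be this bookkeeping, plus the short direct argument for $g=4$.
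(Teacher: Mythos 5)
Your proposal matches the paper's proof, which is likewise a direct citation of the density/potential theorems: Theorem 4.6 of the Postle--Thomas disk paper for $g=3$, the (implicit) argument in Lemma 5.10 of the hyperbolic-families paper for $g=4$, and Theorem 3.9 of the girth-five paper for $g=5$. Your additional observations --- taking $\varepsilon=\min\{\varepsilon_3,\varepsilon_4,\varepsilon_5\}$ and noting that $d_{g,\varepsilon}(G|H)$ only increases as $\varepsilon$ decreases, plus the elementary degree-counting argument for $g=4$ --- are correct and fill in bookkeeping the paper leaves unstated.
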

\begin{proof}
For $g=3$, this is equivalent to Theorem 4.6 of the author and Thomas in~\cite{PostleThomas2}, which constitutes the main result of that paper. For $g=4$, the proof is straightforward and is implicit in Lemma 5.10 of the author and Thomas in~\cite{PostleThomas}. For $g=5$, this is a special case of Theorem 3.9 of the author in~\cite{Postle}, which again constitutes the main result of that paper.
\end{proof}

We note that combined with Lemma~\ref{dBound}, Theorem~\ref{dPlanar} implies that the family $\mathcal{G}_{345}$ is hyperbolic. This is because essentially hyperbolicity would be equivalent to showing that $v(G)$ is linear in $v(H)$ for connected subgraphs $H$, while instead Theorem~\ref{dPlanar} proves a stronger relation between the edges and vertices of $G$ and $H$. Indeed these stronger statements were necessary for the inductive proofs of the hyperbolicity of $\mathcal{G}_3$ and $\mathcal{G}_5$. 

We note that the assumption that $H$ is connected is related to hybercolity (being inside a disc) while the case of $H$ having two connected components is related to strong hyperbolocity (being inside an annulus).

\subsection{Hyperbolicity for No Deletable Subgraphs}\label{DeepHyperbolic}

We will use Theorem~\ref{dPlanar}, a statement stronger than the hyperbolicity of $\mathcal{G}_{345}$, to prove that the family $\mathcal{F}_{345}$ is hyperbolic. First, we need the following easy proposition.

\begin{proposition}\label{DeletableToCritical}
Let $G$ be a graph and $H$ a proper subgraph of $G$ such that $V(H)\subsetneq V(G)$. If $G-V(H)$ is not $r$-deletable in $G$, then there exists a subgraph $G'$ of $G$ containing $H$ such that $G'$ is $H$-critical for $r$-list-coloring.
\end{proposition}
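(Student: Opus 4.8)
The plan is to build the desired subgraph $G'$ greedily by starting with $G$ itself and repeatedly deleting edges (and isolated vertices of $G-V(H)$) as long as the "$H$-critical" condition is violated. Recall the statement of Definition of $H$-critical: $G'$ is $H$-critical with respect to a list-assignment $L$ if for every \emph{proper} subgraph $G''$ of $G'$ containing $H$, there is an $L$-coloring of $H$ that extends to $G''$ but not to $G'$. So the first step is to produce a witnessing list-assignment: since $G-V(H)$ is not $r$-deletable in $G$, by Definition~\ref{DeletableDef} there is a list-assignment $L_0$ of $G-V(H)$ with $|L_0(v)|\ge r-(d_G(v)-d_{G-V(H)}(v))$ for every $v\in V(G)\setminus V(H)$ such that $G-V(H)$ has no $L_0$-coloring. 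I would extend $L_0$ to a list-assignment $L$ of all of $G$ by assigning each vertex of $H$ a list of $r$ brand-new colors (disjoint from everything else); this makes $L$ an $r$-list-assignment, and the fresh colors on $H$ decouple $H$ from the rest so that the non-colorability of $G-V(H)$ by $L_0$ forces that \emph{some} (indeed every) $L$-coloring of $H$ fails to extend to $G$. More precisely, for any $L$-coloring $\phi$ of $H$, restricting the color constraints on $G-V(H)$ imposed by $\phi$ on the boundary removes at most $d_G(v)-d_{G-V(H)}(v)$ colors from $L(v)$, leaving an assignment dominated by $L_0$ in the relevant sense, which is still not colorable; hence $\phi$ does not extend to $G$.

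Next I would take $G'$ to be a subgraph of $G$ that contains $H$, has no $L$-coloring extending some $L$-coloring of $H$, and is edge-minimal (and vertex-minimal among those with no isolated vertices outside $H$) with this property — such a $G'$ exists since $G$ itself has the property. I then claim $G'$ is $H$-critical with respect to $L$: for any proper subgraph $G''$ of $G'$ with $H\subseteq G''$, the minimality of $G'$ means the obstruction is destroyed in $G''$, i.e.\ every $L$-coloring of $H$ that fails to extend to $G'$ does extend to $G''$ — but I need the sharper statement that for \emph{every} such $G''$ there is an $L$-coloring of $H$ extending to $G''$ but not to $G'$. This is where a little care is needed: "$G'$ has no $L$-coloring extending an $L$-coloring of $H$" is not literally the negation of "$G'$ is $L$-colorable"; rather I should set up the minimality with respect to the correct property. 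Concretely, I would instead define $G'$ to be edge-minimal subject to: $G'\supseteq H$ and $G'$ is not $L$-colorable (equivalently, since $H$ gets a trivially extendable list, no $L$-coloring of $H$ extends to $G'$ — but $H$ alone \emph{is} $L$-colorable by the fresh colors). Then for a proper subgraph $G''$ with $H\subseteq G''$, minimality gives that $G''$ is $L$-colorable, so there is an $L$-coloring of $H$ that extends to $G''$; and that same coloring cannot extend to $G'$ since $G'$ is not $L$-colorable at all. That is exactly the $H$-critical condition.

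The main obstacle, as flagged above, is matching the exact wording of the "$H$-critical with respect to $L$" definition (which quantifies over colorings of $H$ extending / not extending) to whatever minimality I impose, and in particular verifying that the list-extension bookkeeping in the first paragraph is valid — i.e.\ that adding fresh colors to $H$ genuinely reduces the problem to the non-$r$-deletability of $G-V(H)$, using the inequality $|L_0(v)|\ge r-(d_G(v)-d_{G-V(H)}(v))$ to absorb exactly the constraints coming across the boundary. Everything else is a routine greedy-deletion argument. I would also remark that since $V(H)\subsetneq V(G)$ there is at least one vertex outside $H$, so $G-V(H)$ is nonempty and the hypothesis "$G-V(H)$ is not $r$-deletable" is meaningful, and $G'$ is indeed a proper subgraph-or-equal of $G$ containing $H$ as required.
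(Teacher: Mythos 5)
There is a genuine gap in the construction of the witnessing list assignment $L$, which is exactly the ``bookkeeping'' step you flagged but did not carry out. If you give each vertex of $H$ a list of $r$ fresh colors and leave every $u\in V(G)\setminus V(H)$ with its original list $L_0(u)$, then $L$ is \emph{not} an $r$-list-assignment: $|L_0(u)|$ is only guaranteed to be at least $r-(d_G(u)-d_{G-V(H)}(u))$, which can be strictly less than $r$ (even $0$). Since ``$H$-critical for $r$-list-coloring'' requires the witness to be an $r$-list-assignment, the conclusion does not follow. Your fallback — pad the outside lists somehow and argue that a precoloring of $H$ ``removes at most $d_G(v)-d_{G-V(H)}(v)$ colors, leaving an assignment dominated by $L_0$'' — also fails: the residual assignment would satisfy the same \emph{size} lower bound as $L_0$, but it need not be contained in $L_0$, and the hypothesis only says $G-V(H)$ has no $L_0$-coloring, not that it has no coloring from every assignment of those sizes. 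The paper's fix is to fix in advance the precoloring $\phi(v)=c_v$ (pairwise distinct new colors on $V(H)$, padded by $r-1$ further colors so $H$-vertices have lists of size $r$) and to set $L(u)=L_0(u)\cup\{c_v: v\in N(u)\cap V(H)\}$ for $u\notin V(H)$: this simultaneously pads $|L(u)|$ up to $r$ and guarantees that after precoloring $H$ by $\phi$ the residual list at $u$ is exactly $L_0(u)$, so $\phi$ does not extend to $G$.

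This correction also undercuts your second paragraph. With the corrected $L$, the graph $G$ may well be $L$-colorable (color each $v\in V(H)$ from $R$ rather than with $c_v$; then the colors $c_v$ remain available outside $H$), so ``edge-minimal subject to $G'\supseteq H$ and $G'$ not $L$-colorable'' may have no starting point. You must instead take $G'$ inclusion-wise minimal subject to the \emph{specific} coloring $\phi$ not extending to $G'$, as the paper does; minimality then gives that $\phi$ extends to every proper subgraph of $G'$ containing $H$ but not to $G'$, which is precisely the definition of $H$-critical with respect to $L$ (a single coloring of $H$ serving all proper subgraphs simultaneously). The rest of your outline — obtaining $L_0$ from non-deletability and finishing by a minimal counterexample to extendability — matches the paper once these two points are repaired.
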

\begin{proof}
Since $G-V(H)$ is not $r$-deletable in $G$, there exists a list-assignment $L_0$ such that $|L_0(v)|\ge r - d_H(v)$ for each $v\in V(G)\setminus V(H)$ and $G-V(H)$ is not $L_0$-colorable.

Let $S=\bigcup_{u\in V(G)\setminus V(H))} L_0(v)$. Let $S'=\{c_v: v\in V(H)\}$ be a set of pairwise distinct new colors (that is $c_v\notin S$ for each $v\in V(H)$). Let $R$ be an arbitrary set of $r-1$ colors disjoint from $S'$. 

Define a new list assignment $L$ of $G$ as follows. For each $v\in V(H)$, let $L(v)=\{c_v\}\cup R$. For each $u\in V(G)\setminus V(H)$, let $L(u) = L_0(u) \cup \{c_v: v\in N(u)\cap V(H)\}$. Now $L$ is an $r$-list-assignment of $G$. Let $\phi$ be the coloring of $H$ given by $\phi(v)=c_v$ for every $v\in V(H)$.

Since $G-V(H)$ is not $L_0$-colorable, it follows that $\phi$ does not extend to an $L$-coloring of $G$. Let $G'$ be an inclusion-wise minimal subgraph of $G$ containing $H$ such that $\phi$ does not extend to an $L$-coloring of $G'$. By the minimality of $G'$, $\phi$ extends to an $L$-coloring of every proper subgraph of $G'$ containing $H$. Thus $G'$ is $H$-critical with respect to $L$. Hence by definition, $G'$ is $H$-critical for $r$-list-coloring, as desired.
\end{proof}

Using Theorem~\ref{dPlanar}, we are now ready to prove the hyperbolicity of $\mathcal{F}_{345}$ as follows. First we prove a density result as follows.

\begin{lemma}\label{dDeletable}
Let $\varepsilon$ be as in Theorem~\ref{dPlanar}. If $G$ is a plane graph having girth at least $g\in\{3,4,5\}$ and $H$ is a connected subgraph of $G$ such that there does not exist $X\subseteq V(G)\setminus V(H)$ such that $G[X]$ is $(8-g)$-deletable in $G$, then 
$$d_{g,\varepsilon}(G|H)\ge 0.$$
\end{lemma}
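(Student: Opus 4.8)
The plan is to reduce Lemma~\ref{dDeletable} to Theorem~\ref{dPlanar} (which controls the density of $H$-critical graphs) by repeatedly passing to critical subgraphs via Proposition~\ref{DeletableToCritical}, and then to add up the resulting density surpluses using Proposition~\ref{dSum}.

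Concretely, I would run an extremal argument (equivalently, induction on $v(G)-v(H)$). Among all connected subgraphs $G'$ of $G$ with $H\subseteq G'$ and $d_{g,\varepsilon}(G'|H)\ge 0$, choose one with $v(G')$ maximum; this set is nonempty since $G'=H$ works (as $d_{g,\varepsilon}(H|H)=0$ and $H$ is connected). If $V(G')=V(G)$, then by Proposition~\ref{dSum} we get $d_{g,\varepsilon}(G|H)=d_{g,\varepsilon}(G|G')+d_{g,\varepsilon}(G'|H)=(g-2)(e(G)-e(G'))+d_{g,\varepsilon}(G'|H)\ge 0$, since $g\ge 3$ and $E(G')\subseteq E(G)$, and we are done. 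So suppose instead $V(G')\subsetneq V(G)$; I will contradict the maximality of $G'$.

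Since $V(G)\setminus V(G')\subseteq V(G)\setminus V(H)$, the hypothesis of the lemma gives that $G[V(G)\setminus V(G')]$ is not $(8-g)$-deletable in $G$, so Proposition~\ref{DeletableToCritical} (with $r=8-g$) produces a subgraph $G^*$ with $G'\subseteq G^*\subseteq G$ that is $G'$-critical for $(8-g)$-list-coloring. Inspecting the proof of Proposition~\ref{DeletableToCritical}, one may take $G^*$ inclusion-minimal with respect to a fixed witnessing $(8-g)$-list-assignment $L$ together with the precoloring $\phi$ of $G'$ by pairwise distinct new colors. Two facts then hold: (i) $V(G^*)\supsetneq V(G')$, since otherwise $\phi$ itself is a proper $L$-coloring of $G^*$, contradicting that $\phi$ does not extend; and (ii) $G^*$ is connected, since if $K'$ were a component of $G^*$ disjoint from $G'$, then by minimality $\phi$ would extend to $G^*-V(K')$, forcing $K'$ to admit no $L$-coloring, yet every vertex of $K'$ lies outside $V(G')$ and so has an $L$-list of size at least $8-g$, making $K'$ a planar graph of girth at least $g$ with no $(8-g)$-list-coloring, contradicting Theorem~\ref{PrecoloredPath} applied with $P=\emptyset$. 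Now Theorem~\ref{dPlanar} applied to $G^*$ with the connected subgraph $G'$ gives $d_{g,\varepsilon}(G^*|G')\ge 0$, whence by Proposition~\ref{dSum} $d_{g,\varepsilon}(G^*|H)=d_{g,\varepsilon}(G^*|G')+d_{g,\varepsilon}(G'|H)\ge 0$. Thus $G^*$ is a connected subgraph of $G$ containing $H$ with $d_{g,\varepsilon}(G^*|H)\ge 0$ and $v(G^*)>v(G')$, contradicting the choice of $G'$.

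The step I expect to be the main obstacle is establishing fact (ii): that the critical subgraph handed to us by Proposition~\ref{DeletableToCritical} can be taken connected, which is exactly what lets Theorem~\ref{dPlanar} be applied at each stage so that the densities telescope. This is not formally part of the statement of Proposition~\ref{DeletableToCritical}, but it follows from its proof together with the already-established fact (Theorem~\ref{PrecoloredPath} with empty precolored path) that planar graphs of girth at least $g$ are $(8-g)$-list-colorable; I would likely state this strengthening explicitly. Everything else — the extremal bookkeeping, the strict vertex increase in (i), and the additivity of $d_{g,\varepsilon}$ — is routine given Theorem~\ref{dPlanar} and Proposition~\ref{dSum}.
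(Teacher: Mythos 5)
Your proposal is correct and follows essentially the same route as the paper: pass to an $H$-critical (here $G'$-critical) subgraph via Proposition~\ref{DeletableToCritical}, apply Theorem~\ref{dPlanar}, and telescope with Proposition~\ref{dSum}; your extremal choice of $G'$ is just the paper's induction on $v(G)-v(H)+e(G)-e(H)$ in disguise. You are in fact more careful than the paper on the two points it glosses over — that the critical subgraph is connected (the paper cites Lemma~\ref{PurseDeletable} tersely for this) and that it strictly grows in vertices (the paper sidesteps this by also counting edges in its induction parameter) — so no changes are needed.
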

\begin{proof}
We proceed by induction on $v(G)-v(H) + e(G)-e(H)$.

If $V(H)=V(G)$, then $d_{g,\varepsilon}(G|H)\ge 0$ as desired. So we may assume that $V(H)\ne V(G)$. By assumption, $G-V(H)$ is not $(8-g)$-deletable in $G$. By Proposition~\ref{DeletableToCritical}, it follows that there exists a subgraph $G'$ of $G$ containing $H$ such that $G$ is $H$-critical for $(8-g)$-list-coloring. Note that $H$ is a proper subgraph of $G'$ by definition of $H$-critical. By Theorem~\ref{dPlanar}, we have that $d_{g,\varepsilon}(G'|H)\ge 0$. By Lemma~\ref{PurseDeletable}, it follows that $G'$ is connected. Note that $v(G)-v(G') +e(G)-e(G') < v(G)-v(H)+e(G)-e(H)$. Hence by induction, $d_{g,\varepsilon}(G|G')\ge 0$. By Proposition~\ref{dSum}, we have that

$$d_{g,\varepsilon}(G|H) = d_{g,\varepsilon}(G|G')+d_{g,\varepsilon}(G'|H)\ge 0 + 0 = 0,$$

\noindent as desired.
\end{proof}

\begin{theorem}\label{DeletableHyperbolic}
The family $\mathcal{F}_{345}$ is hyperbolic.
\end{theorem}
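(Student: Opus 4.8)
The plan is to read the Cheeger constant directly off the density estimates of Lemma~\ref{dDeletable} and Lemma~\ref{dBound}. Fix $g\in\{3,4,5\}$ and $G\in\mathcal{F}_g$ embedded in a surface $\Sigma$, and let $\gamma$ bound an open disk $\Delta\subseteq\Sigma$, meet $G$ only in vertices, and contain a vertex of $G$ in $\Delta$; we may assume $\gamma$ is simple (otherwise pass to the simple boundary curve of $\Delta$, which only enlarges the right-hand side of the hyperbolicity inequality). Let $T$ be the set of vertices of $G$ on $\gamma$, put $m=|T|$, and let $G_1$ be the subgraph of $G$ consisting of all vertices and edges of $G$ drawn in $\overline{\Delta}=\Delta\cup\gamma(\mathbb{S}^1)$. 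An open disk in $\Sigma$ is disjoint from the boundary of $\Sigma$, so $\Delta$ contains no ring vertex; and since $\gamma$ meets no edge of $G$ in its interior, $G_1$ contains no ring edge, whence $G_1\subseteq G-\bigcup_{R\in\mathcal{R}}E(R)$ has girth at least $g$. Moreover no edge of $G$ incident to a vertex of $\Delta$ can cross $\gamma$, so it lies in $\overline{\Delta}$; thus $d_G(v)=d_{G_1}(v)$ for every $v\in V(G)\cap\Delta$, and consequently for every $X\subseteq V(G)\cap\Delta$ one has $G[X]=G_1[X]$ and $G_1[X]$ is $(8-g)$-deletable in $G_1$ if and only if $G[X]$ is $(8-g)$-deletable in $G$. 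Also note that $V(G)\cap\Delta$ consists of vertices not on any ring.

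The heart of the argument is the case $m\ge 2$, where I would build a connected auxiliary ``ring'' $C_T$ absorbing all of $T$. Let $t_1,\dots,t_m$ be the vertices of $T$ in the cyclic order in which they appear along $\gamma$, and join each consecutive pair $t_i,t_{i+1}$ (indices mod $m$) by a fresh path of length $g-1$ with $g-2$ new internal vertices, drawn in a thin collar just outside $\overline{\Delta}$, so that $G_2:=G_1\cup C_T$ is a plane graph and $C_T$ is a cycle. The new internal vertices have degree two, so any cycle of $G_2$ that uses an edge of $C_T$ is either $C_T$ itself (of length $m(g-1)\ge g$) or uses a full $C_T$-link (length $\ge g-1$) together with at least one edge of $G_1$; hence $G_2$ has girth at least $g$. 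Now $V(G_2)\setminus V(C_T)=V(G)\cap\Delta$, and by the previous paragraph no induced subgraph on a subset of it can be $(8-g)$-deletable in $G_2$, for otherwise the corresponding $G[X]$ would be an $(8-g)$-deletable subgraph of $G$ all of whose vertices lie off the rings, contradicting $G\in\mathcal{F}_{345}$. Applying Lemma~\ref{dDeletable} to $G_2$ with the connected subgraph $C_T$ gives $d_{g,\varepsilon}(G_2\mid C_T)\ge 0$, and then Lemma~\ref{dBound} gives $v(G_2)\le\frac{g+\varepsilon}{\varepsilon}v(C_T)=\frac{g+\varepsilon}{\varepsilon}m(g-1)$. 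Subtracting the $m(g-2)$ vertices added to form $C_T$ and the $m$ vertices of $T$, and using $m\le 2(m-1)$ for $m\ge 2$, yields $|V(G)\cap\Delta|\le c(m-1)$ for a constant $c$ depending only on $g$ and $\varepsilon$; taking the largest such $c$ over $g\in\{3,4,5\}$ with the single $\varepsilon$ supplied by Theorem~\ref{dPlanar} produces a Cheeger constant for $\mathcal{F}_{345}$.

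It remains to dispatch the degenerate cases $m\le 1$, where $c(m-1)\le 0$ forces us to prove that $\Delta$ contains no vertex of $G$ at all. If $m=0$, then $\gamma$ is disjoint from $G$, so the vertices of $G$ inside $\Delta$ form a union of connected components of $G$ lying entirely off the rings; each such component is planar of girth at least $g$, hence $(8-g)$-list-colorable by Thomassen's theorem (Theorem~\ref{PrecoloredPath} with empty precolored path), hence $(8-g)$-deletable in $G$ --- contradicting $G\in\mathcal{F}_{345}$, so in fact no vertex of $G$ lies in $\Delta$. If $m=1$, say $T=\{t\}$, then as above no $X\subseteq V(G)\cap\Delta=V(G_1)\setminus\{t\}$ induces an $(8-g)$-deletable subgraph of $G_1$, so Lemma~\ref{dDeletable} applied to $G_1$ with the connected single-vertex subgraph $\{t\}$ gives $d_{g,\varepsilon}(G_1\mid\{t\})\ge 0$; since $G_1$ is planar of girth at least $g$, combining this inequality with Euler's formula exactly as in the proof of Lemma~\ref{dBound} --- but keeping track of the cases $v(G_1)\le 2$ --- forces $v(G_1)=1$, so again $\Delta$ contains no vertex of $G$ and the hyperbolicity inequality holds vacuously. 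Together with the previous paragraph this establishes that $\mathcal{F}_{345}$ is hyperbolic.

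I expect the main obstacle to be the $m\ge 2$ construction: one must add $C_T$ so that it neither destroys the girth hypothesis of Lemma~\ref{dDeletable} --- handled by subdividing each link $g-2$ times, so that no short cycle can close up through $C_T$ --- nor creates a spurious $(8-g)$-deletable subgraph that is invisible in $G$ --- handled by keeping $C_T$ vertex-disjoint from the interior of $\Delta$, so that degrees and induced subgraphs inside $\Delta$ are unchanged. The only other delicate point is that the small cases $m\le 1$ genuinely require the sharp form of the density inequality: merely bounding $v(G_1)$ by a constant would not suffice, and one must push the estimate all the way to $v(G_1)=1$.
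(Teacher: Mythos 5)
Your proof is correct and follows essentially the same strategy as the paper: planarize the disk, attach a connected girth-preserving gadget of size $O(|T|)$ through the boundary vertices, and then read the bound off Lemma~\ref{dDeletable} and Lemma~\ref{dBound}. The only differences are cosmetic --- the paper uses a subdivided apex star where you use a subdivided outer cycle, and it disposes of the degenerate cases $|T|\le 1$ via the purse argument of Lemma~\ref{PurseDeletable} rather than your direct computation --- so both yield the same Cheeger constant up to constants.
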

\begin{proof}
Let $c=\frac{6(5+\varepsilon)}{\varepsilon}$ where $\varepsilon$ is as in Theorem~\ref{dPlanar}. We prove that $c$ is a Cheeger constant for $\mathcal{F}_{345}$ as follows.

Let $G$ be a graph with rings $\mathcal{R}$ embedded in a surface $\Sigma$ of Euler genus $g(\Sigma)$ such that $G\in \mathcal{F}_{345}$, let $R$ be the total
number of ring vertices, and let $\xi : S_1 \rightarrow \Sigma$ be a closed curve that bounds an open disk $\Delta$ and intersects $G$ only in vertices. To avoid notational complications we will
assume that $\xi$ is a simple curve; otherwise we split vertices that $\xi$ visits more than once to reduce to this case. We may assume that $\Delta$ includes at least one vertex
of $G$, for otherwise there is nothing to show. Let $X$ be the set of vertices of $G$ intersected by $\xi$.

Let $G_0$ be the subgraph of $G$ consisting of all vertices and edges drawn in the closure of $\Delta$. We now regard $G_0$ as a graph embedded in the closure of $\Delta$ inside the plane. We define a planar graph $G_1$ obtained from $G_0$ as follows: we add a vertex $v$ in the complement of the closure of $\Delta$; for each vertex $x\in X$, we add the edge $vx$; we then subdivide each edge incident with $v$ exactly once. 

Note that $G_1$ is planar and has girth at least $g$. Let $H=G_1[\{x\}\cup N_{G_1}(x)\cup X]$. Note that $G_0-X$ is not $(8-g)$-deletable and hence $G_1-V(H)$ is not $(8-g)$-deletable. It follows from Lemma~\ref{PurseDeletable} that $G_1-V(H)$ is not a purse. Since $G_1$ is planar, it thus follows that $v(H)\ge 2$.

Since $v(H)\ge 2$, we have $v(H)\le 3|X| \le 6(|X|-1)$. Note that $H$ is connected. Hence by Lemma~\ref{dDeletable}, $d_{g,\varepsilon}(G_1|H) \ge 0$. Thus by Lemma~\ref{dBound}, $v(G_1) \le \frac{g+\varepsilon}{\varepsilon} v(H)$. But then

$$v(G_0) \le v(G_1) \le \frac{g+\varepsilon}{\varepsilon} v(H) \le \frac{6(5+\varepsilon)}{\varepsilon} (v(H)-1) = c(v(H)-1),$$

\noindent as desired.
\end{proof}

\subsection{Strong Hyperbolicity for No Deletable Subgraphs}\label{DeepStronglyHyperbolic}

We now prepare to prove the strong hyperbolicity of $\mathcal{F}_{345}$. First we prove the following lemma.

\begin{lemma}\label{dDeletableCylinder}
There exists $k>0$ such that the following holds: If $G$ is a connected planar graph having girth at least $g\in\{3,4,5\}$ and $H_1, H_2$ are non-empty vertex-disjoint connected subgraphs of $G$ such that there does not exist $X\subseteq V(G)\setminus (V(H_1)\cup V(H_2))$ such that $G[X]$ is $(8-g)$-deletable in $G$, then the distance between $H_1$ and $H_2$ in $G$ is at most $k(v(H_1)+v(H_2))$.
\end{lemma}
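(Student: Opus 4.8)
The plan is to reduce the cylinder/distance statement to the disk-type density bound already established in Lemma~\ref{dDeletable}, using the standard trick of cutting the graph along a shortest path between $H_1$ and $H_2$ and doubling that path to turn the cylinder into a disk. First I would let $P$ be a shortest path in $G$ from $V(H_1)$ to $V(H_2)$, with endpoints $p_1\in V(H_1)$ and $p_2\in V(H_2)$ and length $\ell$; the goal is to show $\ell \le k(v(H_1)+v(H_2))$ for a constant $k$ depending only on $g$ (and ultimately only absolute, via the $\varepsilon$ of Theorem~\ref{dPlanar}). Since $G$ is planar, I would cut the plane open along $P$: this produces a plane graph $G'$ in which $P$ becomes two vertex-disjoint copies $P^{(1)}, P^{(2)}$ (with the internal vertices of $P$ duplicated), and I would let $H = (H_1\cup H_2\cup P^{(1)}\cup P^{(2)})$ viewed inside $G'$, which is a subgraph with at most two connected components --- actually, since $P^{(1)}$ joins a copy of $p_1$ to a copy of $p_2$ and $P^{(2)}$ does likewise, $H$ is connected in $G'$. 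A small bookkeeping point: $v(H)\le v(H_1)+v(H_2)+2\ell$ and $e(H)$ is controlled similarly.

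Next I would check that the hypothesis transfers: since no induced subgraph of $G-(V(H_1)\cup V(H_2))$ is $(8-g)$-deletable in $G$, and cutting along $P$ only duplicates vertices of $P$ (which lie in $H$) without creating new adjacencies among the untouched vertices, it follows that no induced subgraph of $G'-V(H)$ is $(8-g)$-deletable in $G'$ either. Also $G'$ is a plane graph of girth at least $g$ (cutting does not create short cycles, and $P$ being a shortest path between the two subgraphs guarantees no short shortcuts were present). Now $H$ is connected in $G'$, so Lemma~\ref{dDeletable} applies and gives $d_{g,\varepsilon}(G'|H)\ge 0$, and then Lemma~\ref{dBound} gives $v(G')\le \frac{g+\varepsilon}{\varepsilon} v(H)$. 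Since $\ell+1 \le v(P) \le v(G)\le v(G')$ (the path sits inside $G'$), this bounds $\ell$ above by $\frac{g+\varepsilon}{\varepsilon}(v(H_1)+v(H_2)+2\ell)$, which at first glance looks circular because $\ell$ appears on both sides.

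The main obstacle, then, is precisely this apparent circularity: the naive estimate $v(H)\le v(H_1)+v(H_2)+2\ell$ is too weak because $\ell$ can itself be large. The resolution I would pursue is to bound $v(H)$ without the $2\ell$ term by instead cutting along $P$ but only including in $H$ the two endpoint subgraphs $H_1,H_2$ together with a \emph{single} copy of $P$ --- or better, to observe that a shortest path $P$ is an induced path and its internal vertices each have at least one neighbor off $P$, so a density/counting argument forces $\ell$ to be small relative to $v(G')$ independently. Concretely, I expect the right move is: apply Lemma~\ref{dDeletable} with $H := H_1 \cup H_2 \cup P$ (one copy, before cutting is even needed if we just want the bound in $G$ --- but $P$ connects $H_1$ to $H_2$ so $H$ is connected), obtaining $v(G)\le \frac{g+\varepsilon}{\varepsilon}(v(H_1)+v(H_2)+\ell+1)$; combine with the fact that the $\ell-1$ internal vertices of $P$ are contained in $V(G)$ and each contributes genuine structure, and then use the $d_{g,\varepsilon}$ inequality at the level of the critical subgraph $G'\supseteq H$ from Theorem~\ref{dPlanar}, where $(g-2)(e(G')-e(H)) \ge (g+\varepsilon)(v(G')-v(H))$, to extract that $e(H)$ is large relative to $\ell$ --- since $\deg_{G'}(v)\ge 2$ for internal vertices of $P$, edges incident to $P$ number at least $\ell$, forcing $e(G')-e(H)$ and hence $v(G')-v(H)$ to absorb the linear-in-$\ell$ slack, ultimately yielding $\ell \le k(v(H_1)+v(H_2))$ for an absolute constant $k$. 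I would carry out this last estimate carefully, as it is where the genuine content of the lemma lies; everything preceding it is the routine cut-and-paste setup.
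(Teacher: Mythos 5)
You have correctly identified the central difficulty --- that taking $H = H_1\cup H_2\cup P$ makes the bound from Lemma~\ref{dDeletable} circular, since $v(H)$ then contains the quantity $\ell$ you are trying to bound --- but your proposed resolution does not close the gap. The inequality $d_{g,\varepsilon}(G'|H)\ge 0$, i.e.\ $(g-2)(e(G')-e(H))\ge (g+\varepsilon)(v(G')-v(H))$, constrains only the part of $G'$ \emph{outside} $H$; it says nothing about the size of $H$ itself, so no amount of counting edges incident to the internal vertices of $P$ can "absorb the linear-in-$\ell$ slack." (Indeed the inequality becomes only easier to satisfy when $e(G')-e(H)$ is large, so the direction of the estimate is wrong for extracting a bound on $\ell$.) More fundamentally, the disk-type density statement (Theorem~\ref{dPlanar}/Lemma~\ref{dDeletable}, which requires $H$ connected) is intrinsically insufficient here: as the paper notes at the end of Section~\ref{PreviousResults}, the connected-$H$ case corresponds to hyperbolicity (a disk), while the two-component case $H=H_1\cup H_2$ is precisely the annulus/strong-hyperbolicity situation, and the distance bound is the content of that stronger property. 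The cutting-along-$P$ setup is also unnecessary, since $G$ is already planar.

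The paper's proof instead avoids choosing a path at the outset. It takes a subgraph $G'\supseteq H_1\cup H_2$ with $d_{g,\varepsilon}(G'|H_1\cup H_2)\ge 0$, every component meeting $H_1\cup H_2$, and $e(G')+v(G')$ maximum; Lemma~\ref{dBound} bounds $v(G')$ linearly in $v(H_1)+v(H_2)$. If $V(G')=V(G)$, connectivity of $G$ finishes. Otherwise Proposition~\ref{DeletableToCritical} produces a $G'$-critical extension $G''$; if $G'$ or $G''$ fails to be connected one augments $G'$ via Theorem~\ref{dPlanar} and contradicts maximality, and in the remaining case $G''$ is connected and contains both $H_1$ and $H_2$, so the distance is at most $v(G'')$, which Corollary~\ref{OldLinear} (the linear size bound for critical graphs relative to a possibly disconnected precolored subgraph, i.e.\ the strong hyperbolicity of $\mathcal{G}_{345}$) bounds by $k'v(G')$. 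That corollary is the essential input your argument is missing, and without it --- or some equivalent annulus-type bound for critical graphs --- the lemma cannot be derived from the density inequality alone.
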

\begin{proof}
We prove that $k= k'\frac{5+\varepsilon}{\varepsilon}$ suffices where $k'$ is as in Corollary~\ref{OldLinear}. 

Suppose not. Let $H=H_1\cup H_2$. Let $G'$ be a subgraph of $G$ containing $H$ such that every component of $G'$ contains a vertex of $H$ and $d_{g,\varepsilon}(G'|H)\ge 0$ and subject to that, $e(G')+v(G')$ is maximum. 

Since $G$ is planar, we have that $G'$ is planar. Since $d_{g,\varepsilon}(G'|H)\ge 0$ and $G'$ is planar, we have by Lemma~\ref{dBound} that $v(G') \le \frac{g+\varepsilon}{\varepsilon} v(H)$. Note that it also follows from Lemma~\ref{dBound} that every component of $G'$ contains a vertex in $H$ and hence $G'$ has at most two components. 

First suppose that $V(G')=V(G)$. Then $v(G) = v(G') \le \frac{g+\varepsilon}{\varepsilon} v(H)$. Since $G$ is connected, it follows that the distance between $H_1$ and $H_2$ in $G$ is at most $v(G)\le \frac{5+\varepsilon}{\varepsilon} v(H) \le k'(v(H_1)+v(H_2))$, a contradiction.

So we may assume that $V(G')\ne V(G)$. Since $G-V(G')$ is not $(8-g)$-deletable in $G'$, we have by Proposition~\ref{DeletableToCritical} that there exists a subgraph $G''$ of $G$ containing $G'$ such that $G''$ is $G'$-critical for $(8-g)$-list-coloring.  

Now suppose that $G'$ is connected. By Lemma~\ref{dPlanar}, $d_{g,\varepsilon}(G''|G')\ge 0$. By Proposition~\ref{dSum}, 

$$d_{g,\varepsilon}(G''|H) = d_{g,\varepsilon}(G''|G') + d_{g,\varepsilon}(G'|H) \ge 0+0\ge 0,$$

\noindent contradicting the maximality of $G'$.

So we may assume that $G'$ is not connected. As noted above, we have that $G'$ has exactly two components $G_1$ and $G_2$. We may assume without loss of generality that $H_1\subseteq G_1$ and $H_2\subseteq G_2$. 

Next suppose that $G''$ is not connected. Since $G''$ is planar and $G''$ is $G'$-critical for $(8-g)$-list-coloring, it follows from Lemma~\ref{PurseDeletable} that every component of $G''$ contains a vertex of $G'$. Hence $G''$ has two components $G_1'$ and $G_2'$. We may assume without loss of generality that $G_1\subseteq G_1'$ and $G_2\subseteq G_2'$. Since $G' \subsetneq G''$, we may assume without loss of generality that $G_1\subsetneq G_1'$. But then $G_1'$ is $G_1$-critical for $(8-g)$-list-coloring. Hence by Theorem~\ref{dPlanar}, $d_{g,\varepsilon}(G_1'|G_1) \ge 0$. Let $G_3 = G_1'\cup G_2$. Now $d_{g,\varepsilon}(G_3|G') = d_{g,\varepsilon}(G_1'|G_1) \ge 0$. By Proposition~\ref{dSum}, we have that 

$$d_{g,\varepsilon}(G_3|H) = d_{g,\varepsilon}(G_3|G') + d_{g,\varepsilon}(G'|H) \ge 0+0\ge 0,$$

\noindent contradicting the maximality of $G'$.

So we may assume that $G''$ is connected. By Corollary~\ref{OldLinear}, we have that $v(G'')\le k'v(G') \le k'\frac{g+\varepsilon}{\varepsilon} v(H)$ where $k'$ is as in Corollary~\ref{OldLinear}. Since $H_1\cup H_2 = H\subseteq G'\subseteq G''$ and $G''$ is connected, it follows that the distance between $H_1$ and $H_2$ is at most $v(G'') \le  k'\frac{5+\varepsilon}{\varepsilon} (v(H_1)+v(H_2)) = k(v(G_1)+v(G_2))$, as desired.
\end{proof}

We are now ready to prove the strong hyperbolicity of $\mathcal{F}_{345}$ as follows.

\begin{proof}[Proof of Theorem~\ref{DeletableStronglyHyperbolic}]
Since $\mathcal{F}_{345}$ is hyperbolic by Theorem~\ref{DeletableHyperbolic}, it suffices to prove the existence of a strong hyperbolic constant for $\mathcal{F}_{345}$. In fact, we prove the following stronger statement:

\begin{claim} 
There exists $c'>0$ such that the following holds: Let $G\in \mathcal{F}_{345}$ be a graph embedded in a surface $\Sigma$ with rings $\mathcal{R}$. Let $D_1,D_2$ be two cycles of $G$ such that there exists a cylinder $\Lambda \subseteq \Sigma$ with boundary components $D_1$ and $D_2$. If $G'$ is the subgraph of $G$ consisting of all vertices and edges drawn in $\Lambda$, then 

$$v(G')\le c'(v(D_1)+v(D_2)).$$
\end{claim}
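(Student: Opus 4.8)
The plan is to reduce the cylinder ($\Lambda$) situation to the disk situation (Theorem~\ref{DeletableHyperbolic}) together with the distance bound of Lemma~\ref{dDeletableCylinder}, exactly as in the standard argument showing a hyperbolic family with bounded ``cylinder geodesics'' is strongly hyperbolic. First I would cut the cylinder along a shortest path $P$ in $G'$ joining $D_1$ to $D_2$; by Lemma~\ref{dDeletableCylinder} (applied with $H_1 = D_1$, $H_2 = D_2$, noting that $G'$ is planar after cutting and has no $(8-g)$-deletable subgraph disjoint from the rings since $G\in\mathcal{F}_{345}$), the path $P$ has length at most $k(v(D_1)+v(D_2))$. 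Cutting $\Lambda$ open along $P$ turns it into a disk $\Delta$, whose boundary curve meets $G$ in a set $X$ of vertices with $|X| \le v(D_1) + v(D_2) + 2(v(P)+1) = O(v(D_1)+v(D_2))$ (the two copies of $D_1$, $D_2$, and the two copies of the cut path).

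Next I would apply hyperbolicity of $\mathcal{F}_{345}$ to the disk $\Delta$. More precisely, after cutting, the graph drawn in the closed disk $\Delta$ together with $X$ on its boundary is an embedded graph with rings, still in $\mathcal{F}_{345}$ (the family is closed under curve cutting, and no deletable subgraph disjoint from the new rings can appear, since such a subgraph would also be deletable in $G$ avoiding the original rings). Thus, by the Cheeger bound $c$ of Theorem~\ref{DeletableHyperbolic}, the number of vertices of $G$ strictly inside $\Delta$ is at most $c(|X|-1)$. Adding back $|X|$ boundary vertices, the total number of vertices drawn in $\Lambda$ is at most $(c+1)|X|$, which is at most $c'(v(D_1)+v(D_2))$ for $c' := (c+1)\bigl(1 + 2(k+1)\bigr)$, say. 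This proves the claim, and then the strong hyperbolic constant for $\mathcal{F}_{345}$ follows by taking $c_2 := c'\cdot 4d$ (bounding $v(D_1)+v(D_2)$ by $4d$ when $D_1, D_2$ have length at most $2d$), so $\mathcal{F}_{345}$ is strongly hyperbolic.

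The one technical wrinkle I anticipate — and the step I would spend the most care on — is verifying that after cutting along $P$ the resulting graph with rings genuinely lies in $\mathcal{F}_{345}$, i.e.\ that the girth condition and the ``no deletable subgraph disjoint from rings'' condition are preserved. For the girth: the vertices of $P$ and of $D_1, D_2$ become ring vertices after cutting, and the definition of $\mathcal{F}_g$ only requires girth at least $g$ after deleting ring \emph{edges}, so no short non-contractible-in-$G$ cycle is created; one has to check that a cycle through the cut is not spuriously shortened, which is handled because $P$ is a path (not a cycle) and we only duplicate it. For the deletability condition: any induced subgraph $G''$ of the cut graph disjoint from the new rings is an induced subgraph of $G$ disjoint from the original rings (its vertices are untouched by the cut), and an $L$-coloring obstruction for $G''$ in the cut graph transfers verbatim to $G$ since $G''$'s neighborhood degrees are unchanged; hence if the cut graph had such a deletable subgraph, so would $G$, contradicting $G\in\mathcal{F}_{345}$. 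Once these two checks are in place, the rest is the bookkeeping of boundary-vertex counts sketched above.
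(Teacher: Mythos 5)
Your route genuinely differs from the paper's. You share with the paper the key use of Lemma~\ref{dDeletableCylinder} to obtain a short path $P$ from $D_1$ to $D_2$, but you then cut the cylinder along $P$ and invoke the already-proved hyperbolicity of $\mathcal{F}_{345}$ (Theorem~\ref{DeletableHyperbolic}) to bound the vertices inside the resulting disk, whereas the paper never leaves the planar density framework: it takes $H'=D_1\cup P\cup D_2$, which is connected and has $v(H')\le (k+1)(v(D_1)+v(D_2))$, applies Lemma~\ref{dDeletable} to the planar graph $G'$ relative to $H'$, and finishes with Lemma~\ref{dBound}. Your reduction is viable in spirit (hyperbolicity plus a bounded-length crossing path is morally why strong hyperbolicity holds), but as written it has two genuine gaps.

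First, the disconnected case is missing. Lemma~\ref{dDeletableCylinder} is stated only for \emph{connected} planar graphs; if $G'$ is disconnected -- in particular if $D_1$ and $D_2$ lie in different components -- there is no path $P$ at all, and your argument has no branch covering this. The paper handles it separately: using Lemma~\ref{PurseDeletable} one shows every component of $G'$ must meet $D_1\cup D_2$ (a component avoiding both would be a purse disjoint from the rings, hence $(8-g)$-deletable, contradicting $G\in\mathcal{F}_{345}$), and then each component is bounded against the cycle it contains via Lemma~\ref{dDeletable} and Lemma~\ref{dBound}. Second, your justification for applying hyperbolicity after the cut is not correct as stated: closure under curve cutting only applies to curves whose image is \emph{disjoint} from $G$, and the boundary walk of the cut-open cylinder ($D_1$, then $P$, then $D_2$, then $P$ reversed) is a closed walk, not a cycle, so the cut graph is not literally an embedded graph with rings in $\mathcal{F}_{345}$ with that boundary as its rings. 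The repair is to apply Definition~\ref{HyperbolicDef} directly to $G$ in $\Sigma$: one must construct a closed curve that passes only through vertices of $D_1\cup P\cup D_2$ (through the $P$-vertices twice), runs through incident faces between consecutive vertices so as to avoid edges of $G$, and bounds an open disk containing every vertex of $G'$ not on $D_1\cup P\cup D_2$; with that (standard but not automatic) construction, your count $|X|=O(v(D_1)+v(D_2)+v(P))$ and the remaining bookkeeping do give the claimed bound, with constants of the same order as the paper's. So the skeleton is salvageable, but both of these steps need to be supplied before the proof is complete.
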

\begin{proof}
Specifically, we prove that $c' = \frac{5+\varepsilon}{\varepsilon}(k+1)$ suffices where $\varepsilon$ is as in Theorem~\ref{dPlanar} and $k$ is as in Lemma~\ref{dDeletableCylinder}.

Now we regard $G'$ as a planar graph. Let $H=D_1\cup D_2$. Note that $v(H)\le v(D_1)+v(D_2)$.

First suppose $H$ is connected. Then by Lemma~\ref{dDeletable}, $d_{g,\varepsilon}(G'|H) \ge 0$. Thus by Lemma~\ref{dBound}, $v(G') \le \frac{g+\varepsilon}{\varepsilon} v(H)$. But then

$$v(G') \le \frac{g+\varepsilon}{\varepsilon} v(H) \le \frac{(5+\varepsilon)}{\varepsilon} (v(D_1)+v(D_2)) \le c'(v(D_1)+v(D_2)),$$

\noindent as desired.

So we may assume that $H$ is not connected. First suppose that $G'$ is not connected. It follows from Theorem~\ref{PurseDeletable} that every component of $G'$ contains a vertex in $H'$. Hence $G'$ has exactly two components $G_1$ and $G_2$. We may assume without loss of generality that $G_1$ contains $D_1$ and $G_2$ contains $D_2$. By Lemma~\ref{dDeletable}, $d_{g,\varepsilon}(G_1|D_1) \ge 0$ and similarly $d_{g,\varepsilon}(G_2|D_2)\ge 0$. Thus by Theorem~\ref{dBound}, $v(G_1) \le \frac{g+\varepsilon}{\varepsilon} v(D_1)$ and $v(G_2) \le \frac{g+\varepsilon}{\varepsilon} v(D_2)$. But then

$$v(G') \le v(G_1)+v(G_2) \le \frac{5+\varepsilon}{\varepsilon} (v(D_1)+v(D_2)) \le c'(v(D_1)+v(D_2)),$$

\noindent as desired.

So we may assume that $G'$ is connected. Then by Lemma~\ref{dDeletableCylinder}, the distance between $D_1$ and $D_2$ is at most $k(v(D_1)+v(D_2)) \le k(v(D_1)+v(D_2))$. Let $P$ be a shortest path from $D_1$ to $D_2$. Let $H' = H\cup P$. Now $v(H')\le (k+1)(v(D_1)+v(D_2))$. Moreover, $H'$ is connected. Thus by Lemma~\ref{dDeletable}, $d_{g,\varepsilon}(G'|H') \ge 0$. Hence by Lemma~\ref{dBound}, $v(G') \le \frac{g+\varepsilon}{\varepsilon} v(H')$. But then

$$v(G') \le \frac{g+\varepsilon}{\varepsilon} v(H') \le \frac{5+\varepsilon}{\varepsilon} (k+1) (v(D_1)+v(D_2)) = c'(v(D_1)+v(D_2)),$$

\noindent as desired.
\end{proof}

\end{proof}

\section*{Acknowledgments}
I would like to thank Marthe Bonamy for helpful discussions about distributed coloring algorithms. I would like to thank Zden\v{e}k Dvo\v{r}\'ak for fruitful discussions about algorithms for coloring graphs on surfaces. I would like to thank Sergey Norin for insightful discussions about separators. Finally I would like to thank Robin Thomas for discussions about algorithms for coloring planar graphs and graphs on surfaces. I would also like to thank the anonymous referees for their helpful comments.

\end{document}